\documentclass[a4paper,12pt,reqno]{amsart}

\usepackage[T1]{fontenc}
\usepackage{lmodern}
\usepackage[utf8]{inputenc}
\usepackage{amsmath,amssymb,amsthm,mathtools}
\usepackage{enumitem}
\usepackage{microtype}
\usepackage[hidelinks]{hyperref}

\newtheorem{theorem}{Theorem}[section]
\newtheorem{proposition}[theorem]{Proposition}
\newtheorem{lemma}[theorem]{Lemma}
\newtheorem{corollary}[theorem]{Corollary}
\newtheorem{remark}[theorem]{Remark}
\theoremstyle{definition}
\newtheorem{definition}[theorem]{Definition}
\newtheorem{example}[theorem]{Example}

\newcommand{\gyr}{\operatorname{gyr}}
\newcommand{\id}{\operatorname{id}}
\newcommand{\sa}{\mathrm{sa}}

\newcommand{\Om}{\Omega}

\newcommand{\dT}{d_T}
\newcommand{\dH}{d_H}

\newcommand{\poplus}{\mathbin{\oplus^{\mathrm p}}}
\newcommand{\potimes}{\mathbin{\otimes^{\mathrm p}}}
\newcommand{\prho}{\widehat{\rho}}

\title[Local-to-Global Isometries in Standard-norm GGV]{Local-to-Global Isometries in Standard-Norm Generalized Gyrovector Spaces:\\
Thompson and Hilbert Geometries of JB-Cones}
\author{Jyamira Oppekepenguin}
\date{}
\subjclass[2020]{Primary 46B04; Secondary 46L70, 51F99.}

\begin{document}
\maketitle

\begin{abstract}
We introduce and study standard-norm generalized gyrovector spaces, a class in
which the auxiliary one-dimensional norm-value space carries the ordinary real
operations and every fixed scalar map is continuous.  Bijective gyrometric isometries between nonempty connected open
subsets of standard-norm generalized gyrovector spaces extend
uniquely to global bijective gyrometric isometries.  The proof uses
Gyromidpoints, point reflections, continuation of local isometry germs, and a
monodromy argument, and does not require a separate assumption of joint continuity
of scalar multiplication.

We show that the class is substantially larger than the classical real inner
product gyrovector spaces.  Besides arbitrary nonzero real normed spaces and radial
standardizations of the M\"obius, Einstein, and proper-velocity models, it is
stable under finite and countable $\ell^p$-products.  We also construct bounded
mapping-space GGVs and, under natural continuity hypotheses, continuous
function-space GGVs of the form $C(K,G)$ for compact $K$.  These constructions
produce many non-Hilbertian and hybrid nonassociative examples.

For a unital JB-algebra we develop two parallel realizations.  The positive
invertible cone carries a one-parameter family of standard-norm GGV
structures whose gyrometrics are conjugate to Thompson's metric by power maps.
The projective positive cone carries a companion one-parameter family, based on
the variation norm of the quotient by the unit, whose gyrometrics are similarly
conjugate to Hilbert's projective metric.  The projective structures are
continuous and contractible.  Combining the abstract extension theorem with
the global isometry classifications for JB-cones yields local-to-global and
classification theorems for both Thompson and Hilbert geometries.  Full-rank
density matrices and normalized state spaces of finite-dimensional Euclidean
Jordan algebras, including the exceptional Albert algebra, arise as concrete
projective examples.
\end{abstract}

\section{Introduction}

Generalized gyrovector spaces (GGVs), introduced by Abe and Hatori
\cite{AH2015} and revised in \cite{AHrevisited}, provide a common framework for
normed linear spaces, classical gyrovector spaces, and nonlinear positive-cone
geometries.  A central feature of this framework is the gyrometric, which is
constructed from an injective map into a normed space and an auxiliary
one-dimensional norm-value space.  The corrected global Mazur--Ulam theorem of
\cite{AHrevisited} shows that a global bijective gyrometric-preserving map is,
after a left gyrotranslation, an isometrical GGV-isomorphism.  The local
extension problem is substantially more delicate.  It was explicitly raised
in \cite{HatoriRIMS}.  For unital $C^*$-algebras, Hatori
\cite[Theorem~8]{HatoriExtension} classified surjective local $\rho_t$-isometries
between connected open subsets of the positive invertible cones and proved
their extension to surjective global isometries.  In a general
GGV the usual dilation proof of Mankiewicz's theorem \cite{Mankiewicz} is not
available, since scalar multiplication need not distribute over gyroaddition.

Our first purpose is to isolate a natural metric subclass for which this local
problem can be solved intrinsically.  We call a GGV \emph{standard-norm} when its value line has the usual real
operations and every fixed scalar map $a\mapsto r\otimes a$ is continuous in
the gyrometric topology.  The real value-line condition is algebraic; the
additional continuity condition does not identify the GGV with a normed linear
space and does not require the coordinate map $\phi$ to be linear or a
homeomorphism.  In this class the gyrometric is an ordinary metric and scalar
rays become constant-speed geodesics.  Since $\phi(e)=0$ in this class, its
members can also be viewed as normed gyrolinear spaces in the terminology
discussed in \cite[Introduction]{AHrevisited}; this does not impose an
inner-product norm or ordinary vector-space addition on $G$.

The first main result is a Mankiewicz-type theorem: every bijective gyrometric
isometry between nonempty connected open subsets of standard-norm GGVs extends
uniquely to a global bijective gyrometric isometry.  The proof is coordinate-free.
A lens-rigidity argument gives local preservation of Gyromidpoints, point
reflections yield an identity principle, local isometry germs are continued
along paths, and contractibility removes monodromy.  The fixed-scalar
continuity in the definition implies joint continuity of scalar multiplication,
so every standard-norm GGV is contractible and simply connected.

The continuation strategy has precedents in the local-to-global theory of
symmetric spaces and in the reflection-space viewpoint developed by Loos
\cite{Loos}.  In the Banach setting, Klotz \cite[Theorem~5.20]{Klotz}
proves that morphisms
of Lie triple systems of symmetric spaces integrate uniquely when the source
is connected and simply connected.  Kim and Lawson \cite{KimLawson} explain
the relation between smooth reflection quasigroups and uniquely 2-divisible
Bruck loops, or gyrocommutative gyrogroups.  These results provide a natural
geometric context for reflection-based continuation and monodromy.  The
contribution here is to derive the required local rigidity and reflection-based
continuation directly from the standard-norm GGV axioms by purely metric and
algebraic arguments, without assuming a differentiable manifold, an affine
connection, a curvature tensor, or a Lie-triple-system structure.  We use the
established continuation and monodromy principle in this axiomatic setting;
we do not claim a new monodromy principle itself.

A second purpose is to show that the standard-norm condition defines a broad
class rather than a condition tailored to positive cones.  Nonzero real normed spaces
are immediate examples; in particular, non-Hilbert spaces such as $\ell^p$
for $p\ne2$, $c_0$, $C(K)$, and $L^p$ provide standard-norm GGVs which are not
real inner product gyrovector spaces.  We give a general radial standardization
procedure for GGVs whose norm-value line admits an order-preserving
linearization.  It shows in particular that the M\"obius and Einstein gyrovector
spaces become standard-norm GGVs when the Euclidean radial
coordinate is replaced by the rapidity coordinate.  The proper-velocity model
follows by gyrovector-space isomorphism.  We prove permanence under finite and
countable $\ell^p$-products, obtaining hybrid geometries from unrelated
factors.  We further construct bounded mapping-space GGVs and show that, under
natural continuity assumptions on gyroaddition and the coordinate map,
$C(K,G)$ is a standard-norm GGV whenever $K$ is compact.  Thus the
class admits both product and function-space constructions.

The remaining two themes are parallel JB-algebra applications.  Let $A$ be a
unital JB-algebra, $\Omega=A_+^\circ$, and $t>0$.  On the affine positive cone
we use
\[
 a\oplus_t b=\bigl(U_{a^{t/2}}b^t\bigr)^{1/t},\qquad
 r\otimes a=a^r,\qquad \phi(a)=\log a.
\]
We prove that this is a standard-norm GGV and that its gyrometric
satisfies
\[
 \rho_{t,A}(a,b)=\frac1t\,d_T(a^t,b^t).
\]
Thus the whole one-parameter family is conjugate to Thompson geometry through
the power maps.  The known global Thompson classification of Lemmens,
Roelands and Wortel \cite{LRW} is recovered in a form adapted to this family,
and the abstract extension theorem yields the corresponding connected-open
local classification.

There is a projective companion.  On the ray space
$\mathbb P\Omega=\Omega/\mathbb R_{>0}$ we define
\[
 \bar a\poplus_t\bar b
 =\overline{\bigl(U_{a^{t/2}}b^t\bigr)^{1/t}},\qquad
 r\potimes\bar a=\overline{a^r},\qquad
 \Phi(\bar a)=[\log a]\in A/\mathbb Re.
\]
Equipping $A/\mathbb Re$ with the variation norm turns this again into a
standard-norm GGV.  Its gyrometric is
\[
 \prho_{t,A}(\bar a,\bar b)
 =\frac1t\,d_H(\overline{a^t},\overline{b^t}),
\]
so the projective family is conjugate to Hilbert geometry in exactly the same
sense.  A normalized affine section of the projective cone is convex; hence the
projective GGV is contractible.  As an application, we obtain a local-to-global
extension theorem for Hilbert isometries on projective JB-cones.  Combining it
with the global Hilbert isometry classification of Roelands and Wortel
\cite{RW} gives the corresponding local classification.  Full-rank density matrices provide a concrete finite
dimensional realization of this construction.

The two JB constructions may be summarized schematically as
\[
\begin{array}{c|c|c}
 & \text{affine cone }A_+^\circ & \text{projective cone }\mathbb P A_+^\circ\\ \hline
 \text{linear coordinate} & \log a\in A & [\log a]\in A/\mathbb Re\\
 \text{norm} & \|\cdot\| & \|\cdot\|_v\\
 \text{metric at }t=1 & d_T & d_H.
\end{array}
\]
This parallelism is one of the main structural points of the paper.

The paper is organized as follows.  Section~2 establishes the abstract
local-to-global theorem.  Section~3 gives examples, radial standardization, finite and countable product
constructions, and mapping/function-space permanence properties of standard-norm
GGVs.  Section~4 recalls the JB-algebra facts used later, and
Section~5 constructs the basic gyrocommutative gyrogroup on the positive cone.
Section~6 develops the affine standard-norm GGV and Section~7 its Thompson
geometry.  Section~8 constructs the projective standard-norm GGV and relates
its gyrometric to Hilbert's metric.  Section~9 gives the parallel local-to-global
and classification theorems for the affine and projective settings.

\section{Local-to-global theory}

\subsection{Generalized gyrovector spaces and real value-line structure}

We recall the revised definition from \cite{AHrevisited}.  Let
$(G,\oplus)$ be a gyrocommutative gyrogroup with identity $e$, and let
$\otimes:\mathbb R\times G\to G$.  Let $\phi$ be an injection from $G$ into a
real normed space $(V,\|\cdot\|)$.

\begin{definition}\label{def:GGV}
The quadruple $(G,\oplus,\otimes,\phi)$ is a \emph{generalized gyrovector
space}, or GGV, if the following conditions hold:
\begin{align*}
\text{(GGV0)}\quad&
 \|\phi(\gyr[u,v]a)\|=\|\phi(a)\|;\\
\text{(GGV1)}\quad& 1\otimes a=a;\\
\text{(GGV2)}\quad&
 (r_1+r_2)\otimes a=(r_1\otimes a)\oplus(r_2\otimes a);\\
\text{(GGV3)}\quad&
 (r_1r_2)\otimes a=r_1\otimes(r_2\otimes a);\\
\text{(GGV4)}\quad&
 \frac{\phi(|r|\otimes a)}{\|\phi(r\otimes a)\|}
 =\frac{\phi(a)}{\|\phi(a)\|}
 \quad(a\ne e,\ r\ne0);\\
\text{(GGV5)}\quad&
 \gyr[u,v](r\otimes a)=r\otimes\gyr[u,v]a;\\
\text{(GGV6)}\quad&
 \gyr[r_1\otimes v,r_2\otimes v]=\id_G.
\end{align*}
Moreover, the norm-value condition \textup{(GGVV)} is that
\[
 \|\phi(G)\|:=\{\pm\|\phi(a)\|:a\in G\}
\]
is a one-dimensional real linear space with addition $\oplus'$ and scalar
multiplication $\otimes'$, and
\begin{align*}
\text{(GGV7)}\quad&
 \|\phi(r\otimes a)\|=|r|\otimes'\|\phi(a)\|;\\
\text{(GGV8)}\quad&
 \|\phi(a\oplus b)\|
 \le \|\phi(a)\|\oplus'\|\phi(b)\|.
\end{align*}
\end{definition}

The gyrometric is
\[
   \rho(a,b)=\|\phi(a\ominus b)\|,
   \qquad a\ominus b:=a\oplus(\ominus b).
\]
It is left gyrotranslation invariant and symmetric; see
\cite[Proposition~3.5]{AHrevisited}.  The gyrotriangle inequality is
\[
 \rho(a,b)\le \rho(a,c)\oplus'\rho(c,b).
\]

The gyrogroup coaddition is
\[
  a\boxplus b=a\oplus\gyr[a,\ominus b]b,
\]
and the gyromidpoint is
\[
 P(a,b)=\frac12\otimes(a\boxplus b)
       =a\oplus\frac12\otimes(\ominus a\oplus b).
\]
The gyromidpoint identity is
\begin{equation}\label{eq:midpoint-distance-general}
 \rho(a,P(a,b))=\rho(b,P(a,b))
 =\frac12\otimes'\rho(a,b).
\end{equation}

We shall also use the point reflections of
\cite[Proposition~6.1]{AHrevisited}.  For $c\in G$ define
\begin{equation}\label{eq:reflection}
  \sigma_c(x)=2\otimes c\ominus x.
\end{equation}
Then $\sigma_c$ is a bijective involutive gyrometric-preserving map,
$\sigma_c(x)=x$ if and only if $x=c$, and, if $c=P(x,y)$, then
$\sigma_c(x)=y$ and $\sigma_c(y)=x$.  Moreover,
\begin{equation}\label{eq:reflection-distance-general}
  \rho(\sigma_c(x),x)=2\otimes'\rho(c,x).
\end{equation}

\begin{definition}\label{def:standard-norm}
A GGV has a \emph{real value-line structure} if the vector-space
operations on $\|\phi(G)\|$ are the ordinary real operations, that is,
\[
 \alpha\oplus'\beta=\alpha+\beta,
 \qquad r\otimes'\alpha=r\alpha.
\]
A GGV with real value-line structure is a \emph{standard-norm GGV} if,
for every $r\in\mathbb R$, the map
\[
 S_r:G\longrightarrow G,\qquad S_r(a)=r\otimes a,
\]
is continuous in the gyrometric topology.
\end{definition}

\begin{remark}
The real value-line condition concerns only the operations in (GGVV), while
standard-norm also requires the fixed-scalar continuity stated in the
definition.  Neither condition asserts that $\phi$ is linear or a homeomorphism.
Nor does it assert the identity
\[
 \rho(x,y)=\|\phi(x)-\phi(y)\|.
\]
\end{remark}

\begin{lemma}[Real value-line rays]\label{lem:real-value-line-rays}
Let $G$ be a GGV with real value-line structure.  Then $\rho$ is a genuine
metric and, for every $a\in G$ and $r,s\in\mathbb R$,
\begin{equation}\label{eq:same-ray}
  \rho(r\otimes a,s\otimes a)
  =|r-s|\rho(e,a).
\end{equation}
\end{lemma}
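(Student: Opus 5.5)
We plan to prove the ray identity \eqref{eq:same-ray} first, and then use it together with the general facts about $\rho$ to show that $\rho$ is a metric. Fix $a\in G$ and $r,s\in\mathbb R$.

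\emph{Reducing the distance to a single scalar multiple.} By (GGV2), $r\otimes a=((r-s)\otimes a)\oplus(s\otimes a)$. The map $n\mapsto n\otimes a$ is a homomorphism from $(\mathbb R,+)$, so $\ominus(s\otimes a)=(-s)\otimes a$. Then (GGV2) gives
\[
 r\otimes a\ominus s\otimes a=(r\otimes a)\oplus((-s)\otimes a)=(r-s)\otimes a .
\]
Hence $\rho(r\otimes a,s\otimes a)=\|\phi((r-s)\otimes a)\|$.

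\emph{Evaluating that norm.} By (GGV7) with the real value-line structure, this equals $|r-s|\,\|\phi(a)\|$. It remains to identify $\|\phi(a)\|$ with $\rho(e,a)=\|\phi(e\ominus a)\|=\|\phi(\ominus a)\|$. Since $\ominus a=(-1)\otimes a$, applying (GGV7) once more gives $\|\phi(\ominus a)\|=|-1|\,\|\phi(a)\|=\|\phi(a)\|$. This proves \eqref{eq:same-ray}.

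\emph{Metric axioms.} Symmetry is already known from \cite[Proposition~3.5]{AHrevisited}. The gyrotriangle inequality, once $\oplus'=+$, is the ordinary triangle inequality. Nonnegativity is automatic because $\rho$ is a norm of a vector. For definiteness, suppose $\rho(a,b)=0$. Then $\phi(a\ominus b)=0$.

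The main point, and the one we expect to need care, is to show that $\phi(e)=0$, so that injectivity of $\phi$ yields $a\ominus b=e$, i.e.\ $a=b$. To get this, take $r=0$ in (GGV7): $\|\phi(0\otimes a)\|=0\cdot\|\phi(a)\|=0$, and $0\otimes a=e$ by (GGV2). Thus $\phi(e)=0$.

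Finally, $a\ominus b=e$ means $a\oplus(\ominus b)=e$. By uniqueness of inverses in a gyrogroup, $a=\ominus(\ominus b)=b$. We also check $\rho(a,a)=\|\phi(e)\|=0$. The only subtle point is whether the zero of the value line $\|\phi(G)\|$ is the real number $0$; under the real value-line structure this holds by definition.
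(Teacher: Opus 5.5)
Your proof is correct and follows essentially the same route as the paper. In both, $\rho(r\otimes a,s\otimes a)$ is reduced to $\|\phi((r-s)\otimes a)\|$ via $\ominus(s\otimes a)=(-s)\otimes a$ and (GGV2), then (GGV7) is applied, and definiteness comes from $\phi(e)=0$ plus injectivity of $\phi$. The paper cites \cite[Proposition~2.1]{AHrevisited} for $0\otimes a=e$, $\ominus(s\otimes a)=(-s)\otimes a$ and $\phi(e)=0$, whereas you derive these directly from (GGV2), (GGV7) and left cancellation, and you also spell out the step $\rho(e,a)=\|\phi(\ominus a)\|=\|\phi(a)\|$ that the paper leaves implicit.
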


\begin{proof}
The zero of $\|\phi(G)\|$ is the ordinary real number $0$.  By
\cite[Proposition~2.1(i)]{AHrevisited}, $\|\phi(e)\|=0$, and hence
$\phi(e)=0$.  The separation property follows from the injectivity of $\phi$,
while symmetry and the ordinary triangle inequality follow from the basic
gyrometric identities recalled above.  Thus $\rho$ is a metric.

By \cite[Proposition~2.1(v)]{AHrevisited},
$\ominus(s\otimes a)=(-s)\otimes a$.  Hence, using the definition of the
gyrometric together with (GGV2) and (GGV7),
\[
\begin{aligned}
 \rho(r\otimes a,s\otimes a)
 &=\|\phi((r\otimes a)\oplus((-s)\otimes a))\|\\
 &=\|\phi((r-s)\otimes a)\|\\
 &=|r-s|\,\|\phi(a)\|\\
 &=|r-s|\rho(e,a).
\end{aligned}
\]
\end{proof}

\begin{proposition}\label{prop:metric-geodesic}
Let $G$ be a standard-norm GGV.  For $a,b\in G$, the curve
\begin{equation}\label{eq:gyrosegment}
 \gamma_{a,b}(t)
 =a\oplus\bigl(t\otimes(\ominus a\oplus b)\bigr),
 \qquad 0\le t\le1,
\end{equation}
satisfies
\begin{equation}\label{eq:gyrosegment-distance}
 \rho(\gamma_{a,b}(s),\gamma_{a,b}(t))
 =|s-t|\rho(a,b).
\end{equation}
In particular, $G$ is a geodesic, locally path connected metric space.
\end{proposition}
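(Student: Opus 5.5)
The plan is to reduce \eqref{eq:gyrosegment-distance} to Lemma~\ref{lem:real-value-line-rays} by left gyrotranslation invariance of $\rho$. Put $w=\ominus a\oplus b$. Then $\gamma_{a,b}(s)=a\oplus(s\otimes w)$ and $\gamma_{a,b}(t)=a\oplus(t\otimes w)$. Left gyrotranslation invariance (\cite[Proposition~3.5]{AHrevisited}) gives $\rho(a\oplus x,a\oplus y)=\rho(x,y)$. Hence
\[
 \rho(\gamma_{a,b}(s),\gamma_{a,b}(t))=\rho(s\otimes w,t\otimes w)=|s-t|\,\rho(e,w)
\]
by \eqref{eq:same-ray}. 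It then remains to identify $\rho(e,w)$ with $\rho(a,b)$. Applying invariance again,
\[
 \rho(a,b)=\rho(\ominus a\oplus a,\ominus a\oplus b)=\rho(e,w),
\]
using the left cancellation law $\ominus a\oplus(a\oplus x)=x$ and $\ominus a\oplus a=e$. I will double-check that the invariance statement in \cite{AHrevisited} is stated for left gyrotranslations $x\mapsto u\oplus x$ with arbitrary $u$, including $u=\ominus a$. If it is instead phrased in the form $\rho(u\oplus x,u\oplus y)=\rho(x,y)$, this is exactly what is needed.

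Next, the curve starts and ends correctly: $\gamma_{a,b}(0)=a\oplus(0\otimes w)=a\oplus e=a$, using $0\otimes w=e$ from (GGV2). Also $\gamma_{a,b}(1)=a\oplus w=a\oplus(\ominus a\oplus b)=b$ by (GGV1) and left cancellation. Thus $\gamma_{a,b}$ is a constant-speed, distance-realizing path from $a$ to $b$, and $G$ is a geodesic metric space. Continuity of $\gamma_{a,b}$ is automatic from \eqref{eq:gyrosegment-distance}, since the curve is $\rho(a,b)$-Lipschitz. So fixed-scalar continuity is not even needed for this part.

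For local path connectedness, I use that each open ball $B(c,\varepsilon)$ contains the geodesic $\gamma_{c,x}$ for every $x$ in it. Indeed, $\rho(c,\gamma_{c,x}(t))=t\rho(c,x)<\varepsilon$. Balls are therefore path connected (star-shaped about the center), and they form a neighborhood base. The only step I expect to require care is the bookkeeping of the invariance and cancellation identities in the gyrogroup setting. There are no analytic obstacles.
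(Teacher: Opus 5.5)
Your proposal is correct and follows the paper's proof essentially verbatim: left gyrotranslation invariance reduces the distance to the same-ray identity of Lemma~\ref{lem:real-value-line-rays}, invariance again identifies $\rho(e,\ominus a\oplus b)$ with $\rho(a,b)$, and the star-shapedness of balls about their centers gives local path connectedness. Your additional checks are correct details that the paper leaves implicit: that $\gamma_{a,b}(0)=a$ and $\gamma_{a,b}(1)=b$, that the curve is $\rho(a,b)$-Lipschitz and hence continuous, and that fixed-scalar continuity is not needed, only the real value-line structure.
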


\begin{proof}
Using left invariance of the gyrometric and
Lemma~\ref{lem:real-value-line-rays} yields
\[
\begin{aligned}
 \rho(\gamma_{a,b}(s),\gamma_{a,b}(t))
 &=\rho\bigl(s\otimes(\ominus a\oplus b),
              t\otimes(\ominus a\oplus b)\bigr)\\
 &=|s-t|\rho(e,\ominus a\oplus b)\\
 &=|s-t|\rho(a,b).
\end{aligned}
\]
If $b\in B_\rho(a,R)$, then
$\rho(a,\gamma_{a,b}(t))=t\rho(a,b)<R$, so every metric ball is path
connected.
\end{proof}

\begin{proposition}[Joint scalar continuity]
\label{prop:separate-scalar-continuity}
Every standard-norm GGV has jointly continuous scalar multiplication.
\end{proposition}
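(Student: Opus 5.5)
We must show that $(r,a)\mapsto r\otimes a$ is continuous at every $(r_0,a_0)\in\mathbb R\times G$. The plan is to use the triangle inequality to split the error into two pieces: one controlled by the ray identity of Lemma~\ref{lem:real-value-line-rays}, the other by the fixed-scalar continuity of $S_{r_0}$ from Definition~\ref{def:standard-norm}. By Lemma~\ref{lem:real-value-line-rays}, $\rho$ is a genuine metric with ordinary triangle inequality. Hence, for $(r,a)$ near $(r_0,a_0)$,
\[
 \rho(r\otimes a,\,r_0\otimes a_0)\le
 \rho(r\otimes a,\,r_0\otimes a)+\rho(r_0\otimes a,\,r_0\otimes a_0).
\]

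The second term tends to $0$ as $a\to a_0$ because $S_{r_0}$ is continuous in the gyrometric topology. For the first term, \eqref{eq:same-ray} gives
\[
 \rho(r\otimes a,r_0\otimes a)=|r-r_0|\,\rho(e,a).
\]
By the triangle inequality, $\rho(e,a)\le\rho(e,a_0)+\rho(a_0,a)$, so $\rho(e,a)$ stays bounded, say by $\rho(e,a_0)+1$, once $\rho(a,a_0)<1$. The first term is therefore at most $|r-r_0|(\rho(e,a_0)+1)$, which tends to $0$.

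To finish, given $\varepsilon>0$, choose $\delta_1$ so that $\rho(a,a_0)<\delta_1$ forces the second term below $\varepsilon/2$. Take $\delta=\min(\delta_1,1)$ on $a$, and require $|r-r_0|<\varepsilon/(2(\rho(e,a_0)+1))$. Both terms are then below $\varepsilon/2$. Since $\mathbb R\times G$ carries the product metric topology, this $\varepsilon$–$\delta$ estimate is exactly joint continuity.

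The only step needing care is that the linear-in-$|r-s|$ bound of \eqref{eq:same-ray} holds uniformly over the whole ray through $a$. That uniformity is what lets separate continuity in $a$ upgrade to joint continuity, and it relies on the real value-line structure (GGV7 with $\otimes'$ equal to ordinary multiplication). There is no real obstacle beyond noting that $\rho(e,a)$ is locally bounded, which follows from the triangle inequality.
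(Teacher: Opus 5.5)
Your proof is correct and is essentially the paper's argument: you use the same triangle-inequality split into the ray term $|r-r_0|\,\rho(e,a)$, controlled by \eqref{eq:same-ray} and local boundedness of $\rho(e,\cdot)$, plus the term $\rho(S_{r_0}(a),S_{r_0}(a_0))$, controlled by fixed-scalar continuity. The paper phrases this with sequences rather than an explicit $\varepsilon$--$\delta$ choice, which is only a cosmetic difference.
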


\begin{proof}
Let $r_n\to r$ in $\mathbb R$ and $a_n\to a$ in $G$.  By the triangle
inequality and \eqref{eq:same-ray},
\[
 \rho(r_n\otimes a_n,r\otimes a)
 \le |r_n-r|\rho(e,a_n)+\rho(S_r(a_n),S_r(a)).
\]
The first term tends to zero because $\rho(e,a_n)$ is bounded, and the second
by the defining continuity of $S_r$.
\end{proof}
\begin{corollary}\label{cor:contractible}
Every standard-norm GGV is contractible and hence simply connected.
\end{corollary}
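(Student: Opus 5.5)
The plan is to contract $G$ to the identity $e$ along scalar rays.  Define
\[
 H:[0,1]\times G\to G,\qquad H(t,a)=(1-t)\otimes a .
\]
By Proposition~\ref{prop:separate-scalar-continuity}, scalar multiplication $(r,a)\mapsto r\otimes a$ is jointly continuous $\mathbb R\times G\to G$.  Composing with the continuous map $(t,a)\mapsto(1-t,a)$ shows that $H$ is continuous on $[0,1]\times G$.

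Next we check the endpoints.  By (GGV1), $H(0,a)=1\otimes a=a$.  For $t=1$ we need $0\otimes a=e$.  We can get this from Lemma~\ref{lem:real-value-line-rays}:
\[
 \rho(0\otimes a,e)=\rho(0\otimes a,0\otimes e)=0 ,
\]
provided $0\otimes e=e$.  Alternatively, and more directly, (GGV2) with $r_1=r_2=0$ gives
\[
 0\otimes a=(0\otimes a)\oplus(0\otimes a),
\]
and left cancellation in the gyrogroup yields $0\otimes a=e$.  I will use this second route, since it avoids any circularity.  Hence $H(1,\cdot)$ is the constant map to $e$, and $H$ is a homotopy from $\id_G$ to a constant map, so $G$ is contractible.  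The same computation gives $H(t,e)=e$ for all $t$, so the contraction even fixes the base point $e$.

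Simple connectivity follows from two standard facts.  A contractible space is path connected: for each $a$, the path $t\mapsto H(t,a)$ joins $a$ to $e$.  Its fundamental group is trivial because $\id_G$ is homotopic to a constant map, and functoriality of $\pi_1$ then makes the induced map on $\pi_1(G,e)$ both the identity and trivial.  Since $H$ fixes $e$, this is a based homotopy and no change-of-basepoint argument is needed.

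The main obstacle is continuity of $H$ in both variables jointly.  This is precisely what Proposition~\ref{prop:separate-scalar-continuity} supplies from the separate continuity of each $S_r$ together with the ray identity \eqref{eq:same-ray}.  Everything else is algebraic bookkeeping.
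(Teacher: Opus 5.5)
Your proof is correct and is essentially the paper's: the contraction $H(x,s)=(1-s)\otimes x$ along scalar rays, made continuous by Proposition~\ref{prop:separate-scalar-continuity}. The one difference is that you derive $0\otimes a=e$ directly from (GGV2) and gyrogroup left cancellation, while the paper cites \cite[Proposition~2.1(iii)]{AHrevisited}; you were right to set aside the first route, since Lemma~\ref{lem:real-value-line-rays} only compares scalar multiples of a single element and so does not give $\rho(0\otimes a,0\otimes e)=0$.
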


\begin{proof}
The map
\[
 H:G\times[0,1]\longrightarrow G,
 \qquad H(x,s)=(1-s)\otimes x,
\]
is jointly continuous by Proposition~\ref{prop:separate-scalar-continuity} and satisfies $H(x,0)=x$ and $H(x,1)=e$, where
$0\otimes x=e$ is \cite[Proposition~2.1(iii)]{AHrevisited}.
\end{proof}

\subsection{Local midpoint rigidity and an identity principle}

\begin{lemma}[Lens rigidity]\label{lem:lens-rigidity}
Let $G$ be a standard-norm GGV and let $x,y\in G$.  Put
\[
 p=P(x,y),\qquad r=\frac12\rho(x,y),
\]
and
\[
 M(x,y)=\{z\in G:\rho(x,z)=\rho(y,z)=r\}.
\]
Then every bijective isometry $S:M(x,y)\to M(x,y)$ fixes $p$.
\end{lemma}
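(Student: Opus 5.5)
This is the classical Mankiewicz/Väisälä lens argument, using the reflection $\sigma_p$ in place of the linear point reflection. Put $M_0=M(x,y)$ and
\[
 \delta(M_0)=\sup\{\rho(z,p):z\in M_0\}.
\]
The key step is to show that $p$ is the unique point $c\in M_0$ satisfying $\rho(c,z)\le\tfrac12\operatorname{diam}M_0$ for all $z\in M_0$. A bijective isometry $S$ of $M_0$ preserves the diameter and permutes any family of subsets defined purely metrically, so it fixes such a point. If uniqueness fails at the first stage, we iterate as in Väisälä's proof: at each stage take the set of "centres" and shrink.

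We begin with what $\sigma_p$ gives. By the facts recalled after \eqref{eq:reflection}, $\sigma_p$ is an involutive isometry of $G$ that swaps $x$ and $y$. Hence $\sigma_p$ maps $M_0$ onto itself. By \eqref{eq:reflection-distance-general} and the real value-line structure,
\[
 \rho(\sigma_p(z),z)=2\rho(p,z).
\]
We also check that $p\in M_0$, which is \eqref{eq:midpoint-distance-general}, and note that $M_0$ is bounded, since $\rho(z,p)\le\rho(z,x)+\rho(x,p)=2r$.

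Next we set up the standard iteration. Define $M_1=\{c\in M_0:\rho(c,z)\le\tfrac12\operatorname{diam}M_0\ \forall z\in M_0\}$ and recursively
\[
 M_{n+1}=\{c\in M_n:\rho(c,z)\le\tfrac12\operatorname{diam}M_n\ \forall z\in M_n\}.
\]
The argument then proceeds in three steps.
\begin{enumerate}
\item[(i)] $p\in M_n$ and $\sigma_p(M_n)=M_n$ for all $n$. By induction, $\sigma_p$ is an isometry fixing $p$ and preserving $M_n$, so it preserves $M_{n+1}$. Moreover, for $z\in M_n$ we have $2\rho(p,z)=\rho(\sigma_p z,z)\le\operatorname{diam}M_n$, so $p\in M_{n+1}$.
\item[(ii)] $\operatorname{diam}M_{n+1}\le\tfrac12\operatorname{diam}M_n$. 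For $c,c'\in M_{n+1}$ the inequality follows directly from the definition. Hence $\operatorname{diam}M_n\to0$ and $\bigcap_n M_n=\{p\}$.
\item[(iii)] Any bijective isometry $S$ of $M_0$ satisfies $S(M_n)=M_n$. This holds by induction, since each $M_{n+1}$ is defined purely in terms of the metric on $M_n$ and $S|_{M_n}$ is a bijective isometry of $M_n$. Therefore $S(p)\in\bigcap M_n=\{p\}$.
\end{enumerate}

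The main subtlety is step (i). There we need $\sigma_p$ to map $M_0$ into itself, which uses $\rho(\sigma_p z,x)=\rho(z,\sigma_p x)=\rho(z,y)$. We also need the doubling identity to hold with ordinary multiplication by $2$, and this is exactly where the real value-line hypothesis enters. Step (ii) genuinely requires that $\operatorname{diam}M_0<\infty$, which was checked above. Nothing beyond the metric properties and the reflection facts is used; in particular, continuity of $S_r$ is not needed for this lemma.
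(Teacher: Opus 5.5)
Your proof is correct, but it takes a different route from the paper. Your iteration is the original Mazur--Ulam construction of nested ``centre'' sets $M_{n+1}\subset M_n$. In it, $\sigma_p$ serves only to show that $p\in M_n$ for every $n$, via $2\rho(p,z)=\rho(\sigma_p z,z)\le\operatorname{diam}M_n$. Diameter halving then characterizes $p$ purely metrically as $\bigcap_n M_n$.

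The paper instead uses the displacement-doubling argument; this supremum trick is the one due to V\"ais\"al\"a, so the attribution in your write-up should be swapped. The paper sets $\lambda=\sup_S\rho(S(p),p)$ over all bijective self-isometries $S$ of the lens. It observes that $S^\sharp=\sigma_p\circ S^{-1}\circ\sigma_p\circ S$ is again such a self-isometry, with $\rho(S^\sharp(p),p)=2\rho(S(p),p)$, and concludes $2\lambda\le\lambda$, hence $\lambda=0$.

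Both arguments rest on exactly the same inputs: boundedness of the lens, the fact that $\sigma_p$ is an involutive isometry preserving the lens and fixing $p$, and the doubling identity in its real form. Neither needs continuity of the scalar maps.

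The paper's version is shorter. Yours gives an intrinsic description of $p$ that is transported by any bijective isometry between two different lenses. It would therefore yield Lemma~\ref{lem:local-midpoint} directly: $F(M_n)=M'_n$ for all $n$, hence $F(p)=p'$, without forming $F^{-1}\circ\sigma_{p'}\circ F\circ\sigma_p$.

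Two presentational points: the opening remark about uniqueness ``failing at the first stage'' and the quantity $\delta(M_0)$ play no role. Steps (i)--(iii), together with the boundedness of $M_0$, already constitute the complete proof.
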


\begin{proof}
By \eqref{eq:midpoint-distance-general}, $p\in M(x,y)$.  The reflection
$\sigma_p$ interchanges $x$ and $y$ and therefore preserves $M(x,y)$.
Let $\mathcal W$ be the family of bijective self-isometries of $M(x,y)$ and set
\[
 \lambda=\sup\{\rho(S(p),p):S\in\mathcal W\}.
\]
The lens is bounded because, for $z,w\in M(x,y)$,
\[
 \rho(z,w)\le\rho(z,x)+\rho(x,w)=\rho(x,y),
\]
so $\lambda<\infty$.

For $S\in\mathcal W$ put
\[
 S^\sharp=\sigma_p\circ S^{-1}\circ\sigma_p\circ S.
\]
Then $S^\sharp\in\mathcal W$.  Since $\sigma_p(p)=p$, and using
\eqref{eq:reflection-distance-general} in its standard real form, we obtain
\[
\begin{aligned}
 \rho(S^\sharp(p),p)
 &=\rho(S^{-1}\sigma_pS(p),p)\\
 &=\rho(\sigma_pS(p),S(p))\\
 &=2\rho(S(p),p).
\end{aligned}
\]
Hence $2\rho(S(p),p)\le\lambda$ for every $S\in\mathcal W$.  Taking the
supremum gives $2\lambda\le\lambda$, and therefore $\lambda=0$.
\end{proof}

\begin{lemma}\label{lem:reflection-midpoint}
For every $c,x$ in a standard-norm GGV,
\begin{equation}\label{eq:reflection-midpoint}
   P(x,\sigma_c(x))=c.
\end{equation}
\end{lemma}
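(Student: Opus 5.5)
Write $y=\sigma_c(x)=2\otimes c\ominus x$. The plan is to identify the gyromidpoint $P(x,y)$ with $c$ by invoking a uniqueness statement for the fixed points of reflections. We do not try to compute $P(x,y)$ directly through gyrations.

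The key steps, in order, are as follows.

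\textbf{Step 1.} First record, from \cite[Proposition~6.1]{AHrevisited} as recalled after \eqref{eq:reflection}, that if $p=P(x,y)$ then $\sigma_p$ interchanges $x$ and $y$. So $\sigma_p(x)=y=\sigma_c(x)$.

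\textbf{Step 2.} Next, unwind the definition of the reflection. The equality $\sigma_p(x)=\sigma_c(x)$ reads
\[
 (2\otimes p)\ominus x=(2\otimes c)\ominus x.
\]

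\textbf{Step 3.} Use the gyrogroup right cancellation law: the map $u\mapsto u\ominus x$ is a bijection of $G$. Hence $2\otimes p=2\otimes c$.

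\textbf{Step 4.} Apply (GGV3) with $r_1=\tfrac12$, $r_2=2$, together with (GGV1):
\[
 p=\tfrac12\otimes(2\otimes p)=\tfrac12\otimes(2\otimes c)=c.
\]

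The only point needing care is Step 3. Right cancellation in a gyrogroup is standard: the equation $u\ominus x=w$ has the unique solution $u=w\boxplus x$. I would cite this from the gyrogroup literature (Ungar), or from the basic results of \cite{AHrevisited}, rather than reprove it.

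If one prefers to avoid cancellation, there is a metric alternative that uses the standard-norm structure. By \eqref{eq:reflection-distance-general} and Lemma~\ref{lem:real-value-line-rays},
\[
 \rho(x,y)=2\rho(c,x),
\]
and $\rho(c,y)=\rho(\sigma_c(c),\sigma_c(x))=\rho(c,x)$. Thus $c$ lies in the lens $M(x,y)$. Moreover $\sigma_c$ swaps $x$ and $y$ (it is an involution), so it preserves $M(x,y)$, and by Lemma~\ref{lem:lens-rigidity} it fixes $p$. Since the only fixed point of $\sigma_c$ is $c$, this gives $p=c$.

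Either route works. I expect the main obstacle to be only justifying the cancellation step, or alternatively checking that $\sigma_c$ restricts to a bijective self-isometry of $M(x,y)$. The latter is immediate, because $\sigma_c$ is an isometric involution exchanging $x$ and $y$.
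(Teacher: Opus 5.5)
Your proposal is correct. Your second, metric route is exactly the paper's proof: you place $c$ in the lens $M(x,y)$, observe that the isometric involution $\sigma_c$ swaps $x$ and $y$ and so restricts to a bijective self-isometry of $M(x,y)$, and then combine Lemma~\ref{lem:lens-rigidity} with uniqueness of the fixed point of $\sigma_c$.

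Your primary route is genuinely different and also correct. You use the recalled fact that $\sigma_{P(x,y)}$ interchanges $x$ and $y$ to get $(2\otimes p)\ominus x=(2\otimes c)\ominus x$. Right cancellation in the gyrogroup then gives $2\otimes p=2\otimes c$; this holds because gyrogroups are loops, and explicitly $(w\ominus x)\boxplus x=w$. Finally, (GGV1) and (GGV3) let you halve.

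This argument is purely algebraic. It never uses the gyrometric, the real value-line structure, or the fixed-scalar continuity. So it shows that $P(x,\sigma_c(x))=c$ holds in every GGV, given the property of point reflections recalled from \cite[Proposition~6.1]{AHrevisited}. That property itself follows from $2\otimes P(x,y)=x\boxplus y$, commutativity of coaddition in a gyrocommutative gyrogroup, and the cancellation law $(a\boxplus b)\ominus b=a$. Your route also makes lens rigidity unnecessary at this point.

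What the paper's route buys is uniformity of method. It reuses the metric lens-rigidity tool that is needed anyway for Lemma~\ref{lem:local-midpoint}, and it avoids importing loop-theoretic cancellation laws. The cost is a dependence on the real value-line structure, through the $2\lambda\le\lambda$ argument inside Lemma~\ref{lem:lens-rigidity}.
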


\begin{proof}
Put $y=\sigma_c(x)$.  Since $\sigma_c$ is an involutive isometry fixing $c$,
\[
 \rho(c,y)=\rho(c,x).
\]
By \eqref{eq:reflection-distance-general},
\[
 \rho(x,y)=2\rho(c,x).
\]
Thus $c\in M(x,y)$.  Put $p=P(x,y)$.  Since $\sigma_c$ interchanges $x$ and
$y$, it restricts to a bijective self-isometry of $M(x,y)$.  By
Lemma~\ref{lem:lens-rigidity}, $\sigma_c(p)=p$.  The unique fixed point of
$\sigma_c$ is $c$, so $p=c$.
\end{proof}

\begin{lemma}[Local midpoint preservation]\label{lem:local-midpoint}
Let $G_1,G_2$ be standard-norm GGVs and suppose
\[
 F:B_{\rho_1}(a,R)\longrightarrow B_{\rho_2}(b,R)
\]
is a bijective isometry with $F(a)=b$.  If
$x,y\in B_{\rho_1}(a,R/4)$, then
\begin{equation}\label{eq:local-midpoint}
 F(P_1(x,y))=P_2(F(x),F(y)).
\end{equation}
\end{lemma}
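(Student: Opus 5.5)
The plan is to apply the lens rigidity Lemma~\ref{lem:lens-rigidity} to a transported isometry of the lens $M_2(F(x),F(y))$. The main work is checking that the relevant lenses lie inside the balls where $F$ is defined.

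Fix $x,y\in B_{\rho_1}(a,R/4)$ and put $d=\rho_1(x,y)<R/2$. The lens $M_1(x,y)$ consists of points at distance $d/2$ from $x$. Hence every $z\in M_1(x,y)$ satisfies
\[
\rho_1(a,z)\le \rho_1(a,x)+d/2<R/4+R/4=R/2 .
\]
So $M_1(x,y)\subset B_{\rho_1}(a,R/2)$, well inside the domain of $F$. Likewise $x'=F(x)$ and $y'=F(y)$ lie in $B_{\rho_2}(b,R/4)$, since $F$ is an isometry fixing the centers. Also $\rho_2(x',y')=d$, so $M_2(x',y')\subset B_{\rho_2}(b,R/2)$, which lies in the range of $F$. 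Since $F$ is a bijective isometry between the full balls, $F^{-1}$ is also an isometry. Therefore $F$ maps $M_1(x,y)$ bijectively onto $M_2(x',y')$: a point $z$ at distance $d/2$ from both $x$ and $y$ goes to a point at distance $d/2$ from both $x'$ and $y'$, and the converse holds via $F^{-1}$, because the whole of $M_2(x',y')$ lies in the range of $F$.

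Next I transport the reflection. Let $p=P_1(x,y)$ and $q=P_2(x',y')$. The reflection $\sigma_p$ of $G_1$ swaps $x$ and $y$ and is a global isometry, so it restricts to a bijective self-isometry of $M_1(x,y)$. Then
\[
S=F\circ\sigma_p|_{M_1(x,y)}\circ F^{-1}|_{M_2(x',y')}
\]
is a bijective self-isometry of $M_2(x',y')$. By Lemma~\ref{lem:lens-rigidity} in $G_2$, $S$ fixes $q$. Equivalently, $\sigma_p$ fixes $F^{-1}(q)$, which lies in $M_1(x,y)$. The unique fixed point of $\sigma_p$ is $p$, so $F^{-1}(q)=p$. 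This gives $F(p)=q$, which is \eqref{eq:local-midpoint}.

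An equally good alternative is to run Lemma~\ref{lem:lens-rigidity} in $G_1$ with $F^{-1}\circ\sigma_q\circ F$; the two routes are symmetric.

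The only delicate point is making sure $F$ maps the lens onto the lens, not merely into it. This requires that all of $M_2(x',y')$ lies in the image ball $B_{\rho_2}(b,R)$, which is exactly what the radius $R/4$ guarantees via the triangle inequality. Everything else follows directly from the stated properties of $\sigma_c$ and from Lemma~\ref{lem:lens-rigidity}.
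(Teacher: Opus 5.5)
Your proof is correct and follows essentially the same route as the paper: use the triangle inequality to place both lenses inside the half-radius balls so that $F$ maps the source lens onto the target lens, apply lens rigidity to a lens isometry built from $F$ and a point reflection, and conclude from the uniqueness of that reflection's fixed point. The only difference is cosmetic. The paper uses $F^{-1}\circ\sigma_{p'}\circ F\circ\sigma_p$ on the source lens, whereas you use $F\circ\sigma_p\circ F^{-1}$ on the target lens; your alternative $F^{-1}\circ\sigma_q\circ F$ is the paper's map without the extra factor $\sigma_p$, which fixes $p$ anyway.
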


\begin{proof}
Put
\[
 p=P_1(x,y),\qquad p'=P_2(F(x),F(y)),
 \qquad r=\frac12\rho_1(x,y).
\]
Let $M$ and $M'$ be the corresponding source and target lenses.  If $z\in M$,
then
\[
 \rho_1(a,z)\le\rho_1(a,x)+\rho_1(x,z)<\frac R4+\frac R4=\frac R2,
\]
so $M\subset B_{\rho_1}(a,R)$.  The same estimate gives
$M'\subset B_{\rho_2}(b,R)$.  Since $F$ is a bijective isometry of the balls,
$F(M)=M'$.

The map
\[
 S=F^{-1}\circ\sigma_{p'}\circ F\circ\sigma_p
\]
is therefore a bijective self-isometry of $M$.  By
Lemma~\ref{lem:lens-rigidity}, $S(p)=p$.  As $\sigma_p(p)=p$,
\[
  \sigma_{p'}(F(p))=F(p).
\]
The reflection $\sigma_{p'}$ has the unique fixed point $p'$, so $F(p)=p'$.
\end{proof}

\begin{lemma}\label{lem:ball-restriction}
Suppose
\[
 F:B_{\rho_1}(a,R)\longrightarrow B_{\rho_2}(b,R)
\]
is a bijective isometry with $F(a)=b$.  If $x\in B_{\rho_1}(a,R)$ and
\[
 0<r<R-\rho_1(a,x),
\]
then
\[
 F\bigl(B_{\rho_1}(x,r)\bigr)=B_{\rho_2}(F(x),r).
\]
\end{lemma}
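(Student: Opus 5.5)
The plan is to prove the two inclusions separately, using only the triangle inequality, the fact that $F$ is a bijective isometry between the balls, and the hypothesis $F(a)=b$.

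\emph{Forward inclusion.} Let $z\in B_{\rho_1}(x,r)$. First I check that $z$ lies in the domain:
\[
\rho_1(a,z)\le\rho_1(a,x)+\rho_1(x,z)<\rho_1(a,x)+r<R .
\]
So $z\in B_{\rho_1}(a,R)$ and $F(z)$ is defined. Since $F$ is an isometry, $\rho_2(F(x),F(z))=\rho_1(x,z)<r$. Hence $F(z)\in B_{\rho_2}(F(x),r)$, and therefore $F(B_{\rho_1}(x,r))\subset B_{\rho_2}(F(x),r)$.

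\emph{Reverse inclusion.} Let $w\in B_{\rho_2}(F(x),r)$. Because $F$ is an isometry with $F(a)=b$, we have $\rho_2(b,F(x))=\rho_1(a,x)$. The triangle inequality then gives
\[
\rho_2(b,w)\le\rho_2(b,F(x))+\rho_2(F(x),w)<\rho_1(a,x)+r<R .
\]
Thus $w$ lies in the target ball $B_{\rho_2}(b,R)$. Since $F$ is surjective onto that ball, there is some $z\in B_{\rho_1}(a,R)$ with $F(z)=w$. Then $\rho_1(x,z)=\rho_2(F(x),w)<r$, so $z\in B_{\rho_1}(x,r)$. This proves $B_{\rho_2}(F(x),r)\subset F(B_{\rho_1}(x,r))$.

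There is no real obstacle. The only point needing care is the reverse inclusion. There one must transfer the distance $\rho_1(a,x)$ to $\rho_2(b,F(x))$ using $F(a)=b$. Without this step, $w$ cannot be shown to lie in the target ball, and the surjectivity of $F$ could not be invoked. The strict inequality $r<R-\rho_1(a,x)$ is exactly what keeps both enlarged balls inside the domain and target balls. Only the ordinary triangle inequality from Lemma~\ref{lem:real-value-line-rays} is used; none of the GGV structure is needed.
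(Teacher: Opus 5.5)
Your proof is correct and follows the paper's argument: for the reverse inclusion you transfer $\rho_2(b,F(x))=\rho_1(a,x)$ via $F(a)=b$, use the triangle inequality to place $w$ in the target ball, invoke surjectivity of $F$, and use distance preservation to locate the preimage in $B_{\rho_1}(x,r)$. The only difference is that you make explicit the check $B_{\rho_1}(x,r)\subset B_{\rho_1}(a,R)$ in the forward inclusion, which the paper leaves implicit under ``distance preservation.''
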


\begin{proof}
The inclusion from left to right follows from distance preservation.  Conversely,
if $w\in B_{\rho_2}(F(x),r)$, then
\[
 \rho_2(b,w)
 \le \rho_2(b,F(x))+\rho_2(F(x),w)
 <\rho_1(a,x)+r<R.
\]
Thus $w=F(z)$ for some $z\in B_{\rho_1}(a,R)$.  Since
\[
 \rho_1(x,z)=\rho_2(F(x),w)<r,
\]
we have $z\in B_{\rho_1}(x,r)$.
\end{proof}

\begin{definition}\label{def:local-isometry}
Let $O\subset G_1$ be open.  A map $F:O\to G_2$ is called a
\emph{local gyrometric isometry} if, for every $x\in O$, there exists $R>0$
such that
\[
 B_{\rho_1}(x,R)\subset O
\]
and
\[
 F:B_{\rho_1}(x,R)\longrightarrow B_{\rho_2}(F(x),R)
\]
is a bijective isometry.
\end{definition}

We shall use without further comment the elementary fact that the composition
of two local gyrometric isometries is again a local gyrometric isometry; this
follows by choosing centered balls small enough that the first ball is mapped
inside a centered ball on which the second map is an isometry.

\begin{corollary}[Local covariance of point reflections]
\label{cor:local-reflection}
Let $F:O\to G_2$ be a local gyrometric isometry between standard-norm GGVs.
For every $c\in O$ there is $\varepsilon>0$ such that
\begin{equation}\label{eq:reflection-covariance}
 F(\sigma_c(x))=\sigma_{F(c)}(F(x))
\end{equation}
whenever $x\in B_{\rho_1}(c,\varepsilon)$.
\end{corollary}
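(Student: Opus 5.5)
Fix $c\in O$ and choose, by Definition~\ref{def:local-isometry}, a radius $R>0$ with $B_{\rho_1}(c,R)\subset O$ such that
\[
 F:B_{\rho_1}(c,R)\longrightarrow B_{\rho_2}(F(c),R)
\]
is a bijective isometry. The plan is to take $\varepsilon=R/4$ and derive \eqref{eq:reflection-covariance} from Lemma~\ref{lem:local-midpoint} and Lemma~\ref{lem:reflection-midpoint}, using the midpoint characterization of the reflection.

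Let $x\in B_{\rho_1}(c,\varepsilon)$ and put $y=\sigma_c(x)$. Since $\sigma_c$ is an isometry fixing $c$, we have $\rho_1(c,y)=\rho_1(c,x)<R/4$, so both $x$ and $y$ lie in $B_{\rho_1}(c,R/4)$. By Lemma~\ref{lem:reflection-midpoint}, $P_1(x,y)=c$. Applying Lemma~\ref{lem:local-midpoint} with $a=c$ and $b=F(c)$ gives
\[
 F(c)=F(P_1(x,y))=P_2(F(x),F(y)).
\]
By the quoted property of point reflections (if $c'=P(u,v)$ then $\sigma_{c'}(u)=v$), it follows that $\sigma_{F(c)}(F(x))=F(y)=F(\sigma_c(x))$, which is \eqref{eq:reflection-covariance}.

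There is no real obstacle. The only points to check are that $y$ stays in the small ball, which holds because $\sigma_c$ preserves distances to its fixed point $c$, and that the midpoint-to-reflection implication from \cite[Proposition~6.1]{AHrevisited} is used in the correct direction. Both are immediate. Lemma~\ref{lem:local-midpoint} requires the isometry to be between balls of the same radius centered at $c$ and $F(c)$, and this is exactly what Definition~\ref{def:local-isometry} provides.
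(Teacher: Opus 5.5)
Your proof is correct and follows the paper's argument essentially verbatim: you take a centered ball radius $R$, work with $x\in B_{\rho_1}(c,R/4)$, use Lemma~\ref{lem:reflection-midpoint} to get $P_1(x,\sigma_c(x))=c$, and then combine Lemma~\ref{lem:local-midpoint} with the fact that the reflection about a gyromidpoint interchanges its endpoints. Your explicit justification that $\sigma_c(x)$ stays in the small ball, because $\sigma_c$ is an isometry fixing $c$, fills in a step that the paper only asserts.
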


\begin{proof}
Choose $R>0$ so that $F$ is a centered ball isometry on
$B_{\rho_1}(c,R)$.  Let $x\in B_{\rho_1}(c,R/4)$ and put
$y=\sigma_c(x)$.  Then $y\in B_{\rho_1}(c,R/4)$ and, by
Lemma~\ref{lem:reflection-midpoint}, $P_1(x,y)=c$.  Lemma
\ref{lem:local-midpoint} gives
\[
 P_2(F(x),F(y))=F(c).
\]
The reflection about this gyromidpoint interchanges $F(x)$ and $F(y)$, hence
$F(y)=\sigma_{F(c)}(F(x))$.
\end{proof}

\begin{lemma}[Identity principle]\label{lem:identity-principle}
Let $G_1,G_2$ be standard-norm GGVs, let $O\subset G_1$ be connected and
open, and let $F_1,F_2:O\to G_2$ be local gyrometric isometries.  If they agree
on a nonempty open subset of $O$, then $F_1=F_2$ on $O$.
\end{lemma}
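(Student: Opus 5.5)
Let $U$ be the set of points $x\in O$ having a neighbourhood on which $F_1=F_2$, i.e.\ the interior of the coincidence set $\{x\in O: F_1(x)=F_2(x)\}$. By hypothesis $U$ is nonempty, and it is open by construction. Since $O$ is connected, it suffices to show that $U$ is relatively closed in $O$. So let $c\in O$ lie in the closure of $U$. Both maps are continuous (being local isometries), so the coincidence set is closed in $O$, and hence $F_1(c)=F_2(c)$. It remains to show that $F_1$ and $F_2$ agree on a whole neighbourhood of $c$.

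First apply Corollary~\ref{cor:local-reflection} to each $F_i$ at $c$, obtaining $\varepsilon>0$ (the minimum of the two) such that
\[
F_i(\sigma_c(x))=\sigma_{F_i(c)}(F_i(x)),\qquad x\in B_{\rho_1}(c,\varepsilon),\ i=1,2.
\]
Shrinking $\varepsilon$, we may also assume both $F_i$ are centered ball isometries on $B_{\rho_1}(c,\varepsilon)$. Next, since $c\in\overline U$, pick $u\in U$ with $\rho_1(c,u)<\varepsilon/4$, and a small ball $B_{\rho_1}(u,\delta)\subset U\cap B_{\rho_1}(c,\varepsilon/4)$ on which $F_1=F_2$. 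The idea is to propagate agreement from this ball using point reflections and midpoints.

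Because $\sigma_c$ is an isometric involution, it maps $B_{\rho_1}(u,\delta)$ onto $B_{\rho_1}(\sigma_c(u),\delta)$. For $x$ in the first ball, the reflection covariance together with $F_1(c)=F_2(c)$ and $F_1(x)=F_2(x)$ gives $F_1(\sigma_c x)=F_2(\sigma_c x)$. So the maps also agree near $\sigma_c(u)$, which lies on the opposite side of $c$. Now I would use Lemma~\ref{lem:local-midpoint}: for $x$ near $u$ and $y$ near $\sigma_c(u)$, all within $\varepsilon/4$ of $c$, we have $F_i(P_1(x,y))=P_2(F_i(x),F_i(y))$, and the right side is the same for $i=1,2$. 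Hence $F_1=F_2$ on the set of midpoints $P_1(x,y)$ with $x\in B(u,\delta)$, $y\in B(\sigma_c u,\delta)$. This set contains $c=P_1(u,\sigma_c u)$ by Lemma~\ref{lem:reflection-midpoint}.

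The main obstacle is showing that this midpoint set contains an open neighbourhood of $c$. Continuity of $P_1$ gives the reverse containment easily, but openness does not follow from that. My first attempt is to fix $x$ and vary $y$. Given $z$ near $c$, set $y=\sigma_z(x)$, which satisfies $P_1(x,y)=z$ by Lemma~\ref{lem:reflection-midpoint}. We then need $\sigma_z(x)\in B(\sigma_c u,\delta)$ when $x=u$ and $z$ is close to $c$. This reduces to continuity of $z\mapsto\sigma_z(u)=2\otimes z\ominus u$. That continuity holds because $S_2$ is continuous and right gyrotranslation is continuous: $\rho(a\ominus u,b\ominus u)$ can be controlled via the gyrometric identity $\rho(a\ominus u,b\ominus u)=\rho(a,b)$ up to a gyration, and gyrations preserve norms by (GGV0). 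If this continuity step proves delicate, an alternative is to use the geodesic structure of Proposition~\ref{prop:metric-geodesic} and iterate reflections along the gyrosegment. Granting it, $F_1=F_2$ on a ball about $c$, so $c\in U$, $U$ is clopen, and $U=O$.
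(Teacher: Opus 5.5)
The step you yourself single out as the main obstacle rests on a false claim. The identity $\rho(a\ominus u,b\ominus u)=\rho(a,b)$ ``up to a gyration'' does not hold, because right gyrotranslations $a\mapsto a\oplus w$ are in general not isometries. For example, in the complex M\"obius disc, where $a\oplus w=(a+w)/(1+\overline{a}\,w)$, the map $a\mapsto a\oplus w$ is neither holomorphic nor antiholomorphic for $w\neq0$. Hence it does not preserve the Poincar\'e metric.

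The standard-norm hypotheses give you isometry of left gyrotranslations and continuity of each fixed $S_r$. They do not give continuity of $\oplus$ in its left variable; the paper has to assume continuity of gyroaddition separately in Proposition~\ref{prop:CKG}. So writing $\sigma_z(u)=2\otimes z\ominus u$ does not show that $z\mapsto\sigma_z(u)$ is continuous. Without that, you have not shown that the midpoint set contains a neighbourhood of $c$. The fallback of iterating reflections along a gyrosegment is too vague to close this.

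The continuity claim is nevertheless true, and it has a short proof using only left translations. For fixed $u$, the map $y\mapsto P_1(u,y)=u\oplus\frac12\otimes(\ominus u\oplus y)$ is a bijection of $G_1$. By left cancellation together with (GGV1) and (GGV3), its inverse is $z\mapsto u\oplus\bigl(2\otimes(\ominus u\oplus z)\bigr)$. Lemma~\ref{lem:reflection-midpoint} then forces $\sigma_z(u)=u\oplus\bigl(2\otimes(\ominus u\oplus z)\bigr)$. Consequently $\rho_1(\sigma_z(u),\sigma_c(u))=\rho_1\bigl(S_2(\ominus u\oplus z),S_2(\ominus u\oplus c)\bigr)\to0$ as $z\to c$, since $\rho_1(\ominus u\oplus z,\ominus u\oplus c)=\rho_1(z,c)$ and $S_2$ is continuous. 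With this inserted, your route (reflect through $c$, then take midpoints across $c$) is correct.

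The paper's proof avoids continuity in the centre altogether by reflecting through $m=P_1(c,u)$ instead of through $c$. Midpoint preservation gives $F_1(m)=F_2(m)$. Since $\sigma_m$ is an isometry interchanging $u$ and $c$, it carries the ball of agreement around $u$ directly onto a neighbourhood of $c$. The cost is the radius bookkeeping via Lemma~\ref{lem:ball-restriction}, needed to make reflection covariance at $m$ valid on a ball containing $u$. Your route, which works only at the centre $c$, does not need that bookkeeping.
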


\begin{proof}
Let
\[
 C=\{x\in O: F_1=F_2\text{ on a neighborhood of }x\}.
\]
Then $C$ is nonempty and open.  We show that it is closed in $O$.

Let $x\in\overline C\cap O$.  Since local isometries are continuous,
$F_1(x)=F_2(x)$.  Choose $R>0$ so small that, for $i=1,2$,
\[
 F_i:B_{\rho_1}(x,4R)\longrightarrow
 B_{\rho_2}(F_i(x),4R)
\]
is a centered ball isometry and $B_{\rho_1}(x,4R)\subset O$.
Choose $y\in C$ with $\rho_1(x,y)<R/2$ and put $m=P_1(x,y)$.  Applying
Lemma~\ref{lem:local-midpoint} to each $F_i$ gives
\[
 F_i(m)=P_2(F_i(x),F_i(y)).
\]
Since $F_1(x)=F_2(x)$ and $F_1(y)=F_2(y)$, we have
\[
 F_1(m)=F_2(m)=:m'.
\]

Because $\rho_1(x,m)=\rho_1(y,m)<R/4$, Lemma~\ref{lem:ball-restriction}
applied to the centered $4R$-balls shows that, for $i=1,2$,
\[
 F_i:B_{\rho_1}(m,3R)\longrightarrow B_{\rho_2}(m',3R)
\]
is again a centered ball isometry.  The proof of
Corollary~\ref{cor:local-reflection}, applied with radius $3R$, therefore gives
\[
 F_i(\sigma_m(w))=\sigma_{m'}(F_i(w)),\qquad i=1,2,
\]
for every $w\in B_{\rho_1}(m,3R/4)$.  Since
$\rho_1(m,y)<R/4<3R/4$ and $y\in C$, we may choose a nonempty open
neighborhood
\[
 W\subset B_{\rho_1}(m,3R/4)
\]
of $y$ on which $F_1=F_2$.  Then the two maps also agree on $\sigma_m(W)$,
which is a neighborhood of $x$ because $\sigma_m$ is a homeomorphism and
$\sigma_m(y)=x$.  Thus $x\in C$.

Hence $C$ is both open and closed in the connected set $O$, so $C=O$.
\end{proof}

\subsection{Continuation of local isometries}

We use continuation in the usual germ sense.  Let
$\gamma:[0,1]\to G_1$ be a path.  A \emph{continuation} of a local
isometry germ $\mathfrak f_0$ at $\gamma(0)$ along $\gamma$ is a family
$(\mathfrak f_t)_{t\in[0,1]}$ such that $\mathfrak f_t$ is a local
isometry germ at $\gamma(t)$, the germ at $t=0$ is $\mathfrak f_0$, and for
every $t_0\in[0,1]$ there exist a relative open interval $I\subset[0,1]$
containing $t_0$ and a local gyrometric isometry $F$ defined on an open
neighborhood of $\gamma(I)$ such that
\[
 \mathfrak f_t=[F]_{\gamma(t)}\qquad(t\in I).
\]

\begin{lemma}[Continuation along paths]\label{lem:path-continuation}
Let $G_1,G_2$ be standard-norm GGVs.  Every local gyrometric-isometry germ
from $G_1$ to $G_2$ can be continued along every continuous path in $G_1$.
\end{lemma}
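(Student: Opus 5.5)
The plan is to show that a local isometry germ can be extended to an actual local gyrometric isometry defined on a ball of uniform size around each point of the source, without any restriction on how large that ball is. Given this, continuation along a path becomes a Lebesgue-number argument. The tool that makes this possible is the reflection covariance of Corollary~\ref{cor:local-reflection}. A map defined near $c$ can be pushed to a neighborhood of a nearby point $c'$ by conjugating with reflections. Concretely, suppose $F$ is a centered ball isometry on $B_{\rho_1}(c,R)$ and $m$ is a point close to $c$. Then
\[
 \widetilde F=\sigma_{F(m)}\circ F\circ\sigma_m
\]
is defined on $\sigma_m(B_{\rho_1}(c,R))=B_{\rho_1}(\sigma_m(c),R)$. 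It is a bijective isometry onto $B_{\rho_2}(\widetilde F(\sigma_m(c)),R)$, since reflections are global bijective isometries. By Corollary~\ref{cor:local-reflection}, which we may apply with $\varepsilon=R/4$ following its proof, $\widetilde F$ agrees with $F$ near $c$ whenever $\rho_1(m,c)$ is small. The sets $B(c,R)$ and $B(\sigma_m(c),R)$ overlap in an open set. On a neighborhood of $c$ the two maps agree, so by the identity principle (Lemma~\ref{lem:identity-principle}, applied on the connected ball intersection, or at least on a connected open piece of it containing $c$) they agree on the overlap. Their union is therefore a local gyrometric isometry on $B(c,R)\cup B(\sigma_m(c),R)$. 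The radius $R$ is preserved by this step, and this is the crucial point.

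The steps in order are as follows. First, fix the initial germ $\mathfrak f_0=[F_0]_{\gamma(0)}$ with $F_0$ a centered isometry on $B_{\rho_1}(\gamma(0),R)$. Second, use uniform continuity of $\gamma$ on $[0,1]$ to choose a partition $0=t_0<t_1<\dots<t_N=1$ such that $\gamma([t_{k-1},t_k])\subset B_{\rho_1}(\gamma(t_{k-1}),R/8)$. Third, proceed inductively. Given a centered ball isometry $F_{k-1}$ on $B(\gamma(t_{k-1}),R)$, let $m=P_1(\gamma(t_{k-1}),\gamma(t_k))$. By Lemma~\ref{lem:reflection-midpoint} together with the reflection property, $\sigma_m(\gamma(t_{k-1}))=\gamma(t_k)$. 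Set $F_k=\sigma_{F_{k-1}(m)}\circ F_{k-1}\circ\sigma_m$, which is a centered ball isometry on $B(\gamma(t_k),R)$ of the same radius $R$. Since $\rho_1(m,\gamma(t_{k-1}))<R/16$, covariance gives $F_k=F_{k-1}$ on a neighborhood of $\gamma(t_{k-1})$. Fourth, define $\mathfrak f_t=[F_k]_{\gamma(t)}$ for $t\in[t_{k-1},t_k]$. At the junction $t_k$ the two possible definitions coincide. They agree near $\gamma(t_k)$ because $F_k$ and $F_{k-1}$ agree near $\gamma(t_{k-1})$, and both are local isometries on the connected set $B(\gamma(t_{k-1}),R)\cap B(\gamma(t_k),R)$. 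Since $\rho_1(\gamma(t_{k-1}),\gamma(t_k))<R/8$, this set contains the ball $B(\gamma(t_k),7R/8)$, which is path connected by Proposition~\ref{prop:metric-geodesic}. The identity principle then propagates the agreement. Fifth, the local condition in the definition of continuation holds with $I$ a small interval around each $t_0$ and $F=F_k$, or $F$ the glued map near a junction, defined on the ball neighborhood of $\gamma(I)$.

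The main obstacle is the connectedness needed in the identity principle. One must exhibit a connected open set containing both $\gamma(t_{k-1})$ and $\gamma(t_k)$ inside both domains, and a ball intersection need not be connected a priori. I would avoid this by using the smaller ball $B(\gamma(t_k),7R/8)$ noted above: it is path connected by the geodesic rays of Proposition~\ref{prop:metric-geodesic}, it contains $\gamma(t_{k-1})$, and it lies inside both domains. A secondary check is that the conjugated map really is a centered ball isometry onto the right ball, which follows because $\sigma_{F(m)}$ is a global isometry sending $F(\sigma_m(c'))$ appropriately. The uniformity of $R$, which comes from reflections being global isometries, is what prevents the radii from shrinking to zero along the path.
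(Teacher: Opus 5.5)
Your proposal is correct and takes essentially the paper's approach. It uses the same radius-preserving one-step transport $F_k=\sigma_{F_{k-1}(m)}\circ F_{k-1}\circ\sigma_m$ about the edge midpoint, local reflection covariance for agreement near the edge, and the identity principle on a ball lying in both domains (you take an $R/8$ partition and the ball $B_{\rho_1}(\gamma(t_k),7R/8)$, where the paper takes an $R/4$ partition and $B_{\rho_1}(a_{j+1},R/2)$). The only loose point is in your overview: gluing over all of $B(c,R)\cup B(\sigma_m(c),R)$ would require agreement on the whole, possibly disconnected, overlap, but your actual steps never use this.
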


\begin{proof}
Represent the initial germ by a centered ball isometry
\[
 F_0:B_{\rho_1}(a_0,R)\longrightarrow B_{\rho_2}(b_0,R).
\]
Let $\gamma:[0,1]\to G_1$ be a path with $\gamma(0)=a_0$.  By uniform
continuity, choose a partition
\[
 0=t_0<t_1<\cdots<t_N=1
\]
so fine that, with $a_j=\gamma(t_j)$,
\begin{equation}\label{eq:path-small-segment}
 \operatorname{diam}_{\rho_1}\gamma([t_j,t_{j+1}])<R/4
 \qquad(j=0,\ldots,N-1).
\end{equation}
In particular, $\rho_1(a_j,a_{j+1})<R/4$.

Suppose inductively that
\[
 F_j:B_{\rho_1}(a_j,R)\longrightarrow B_{\rho_2}(b_j,R)
\]
is a centered ball isometry.  Put
\[
 m_j=P_1(a_j,a_{j+1}).
\]
By Lemma~\ref{lem:local-midpoint},
\[
 F_j(m_j)=P_2(F_j(a_j),F_j(a_{j+1})).
\]
Define
\begin{equation}\label{eq:continued-map}
 F_{j+1}(z)
 =\sigma_{F_j(m_j)}\bigl(F_j(\sigma_{m_j}(z))\bigr),
 \qquad z\in B_{\rho_1}(a_{j+1},R).
\end{equation}
Since $\sigma_{m_j}$ interchanges $a_j$ and $a_{j+1}$, it maps
$B_{\rho_1}(a_{j+1},R)$ isometrically onto $B_{\rho_1}(a_j,R)$.  Likewise,
$\sigma_{F_j(m_j)}$ interchanges $F_j(a_j)$ and $F_j(a_{j+1})$.  Hence
$F_{j+1}$ is a centered ball isometry
\[
 B_{\rho_1}(a_{j+1},R)
 \longrightarrow
 B_{\rho_2}(F_j(a_{j+1}),R).
\]
By Corollary~\ref{cor:local-reflection}, $F_{j+1}$ and $F_j$ agree on a
neighborhood of $m_j$.  Moreover,
\[
 B_{\rho_1}(a_{j+1},R/2)
 \subset B_{\rho_1}(a_j,R)\cap B_{\rho_1}(a_{j+1},R),
\]
because $\rho_1(a_j,a_{j+1})<R/4$.  The midpoint $m_j$ belongs to this ball.
Hence Lemma~\ref{lem:identity-principle}, applied to the restrictions of
$F_j$ and $F_{j+1}$ to the connected ball
$B_{\rho_1}(a_{j+1},R/2)$, gives
\begin{equation}\label{eq:consecutive-agreement}
 F_j=F_{j+1}\qquad\hbox{on }B_{\rho_1}(a_{j+1},R/2).
\end{equation}
By \eqref{eq:path-small-segment}, the whole subpath
$\gamma([t_j,t_{j+1}])$ lies in $B_{\rho_1}(a_j,R/4)$ and hence in the
domain of $F_j$.  For $t\in[t_j,t_{j+1}]$, let $\mathfrak f_t$ be the germ
of $F_j$ at $\gamma(t)$.  The agreement
\eqref{eq:consecutive-agreement} makes these definitions compatible at the
partition points.  The resulting family of germs is a continuation along
$\gamma$ in the sense defined above.
\end{proof}

\begin{lemma}[Uniqueness along a fixed path]\label{lem:fixed-path-uniqueness}
Let $G_1,G_2$ be standard-norm GGVs, let
$\gamma:[0,1]\to G_1$ be a continuous path, and fix a local
isometry germ $\mathfrak f_0$ at $\gamma(0)$.  Then any two continuations of
$\mathfrak f_0$ along $\gamma$ coincide at every parameter value.  In
particular, the terminal germ at $\gamma(1)$ is uniquely determined by
$\gamma$ and $\mathfrak f_0$.
\end{lemma}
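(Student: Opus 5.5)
The plan is the standard connectedness argument on the parameter interval, using the identity principle (Lemma~\ref{lem:identity-principle}) as the only nontrivial input. Let $(\mathfrak f_t)$ and $(\mathfrak g_t)$ be two continuations of $\mathfrak f_0$ along $\gamma$, and set
\[
 T=\{t\in[0,1]:\mathfrak f_t=\mathfrak g_t\}.
\]
We have $0\in T$ by hypothesis. We shall show that $T$ is both open and closed in $[0,1]$. Since $[0,1]$ is connected, this gives $T=[0,1]$, and the statement about the terminal germ follows.

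For both openness and closedness, fix $t_0\in[0,1]$. By the definition of continuation, there are relative open intervals $I,J\ni t_0$ and local gyrometric isometries $F$, $H$, defined on open neighborhoods $U\supset\gamma(I)$ and $V\supset\gamma(J)$, such that $\mathfrak f_t=[F]_{\gamma(t)}$ for $t\in I$ and $\mathfrak g_t=[H]_{\gamma(t)}$ for $t\in J$. Choose $\varepsilon>0$ with $B_{\rho_1}(\gamma(t_0),\varepsilon)\subset U\cap V$. By continuity of $\gamma$, pick a relative open interval $K\subset I\cap J$ containing $t_0$ with $\gamma(K)\subset B:=B_{\rho_1}(\gamma(t_0),\varepsilon)$. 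The ball $B$ is connected, indeed path connected by Proposition~\ref{prop:metric-geodesic}, and $F|_B$, $H|_B$ are local gyrometric isometries on $B$.

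The key dichotomy is the following. If $\mathfrak f_s=\mathfrak g_s$ for a single $s\in K$, then $F$ and $H$ agree on a nonempty open subset of $B$ near $\gamma(s)$. Lemma~\ref{lem:identity-principle} then gives $F=H$ on all of $B$, and hence $\mathfrak f_t=[F]_{\gamma(t)}=[H]_{\gamma(t)}=\mathfrak g_t$ for every $t\in K$.

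This dichotomy yields both properties of $T$.
\begin{itemize}
\item \emph{Openness.} Take $t_0\in T$ and $s=t_0$. Then all of $K$ lies in $T$, so $T$ is open.
\item \emph{Closedness.} Take $t_0\in\overline T$. Then $K$ meets $T$ at some $s$, so $K\subset T$, and in particular $t_0\in T$. Hence $T$ is closed.
\end{itemize}

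The only point requiring care, and the place I expect the main (mild) obstacle, is ensuring that the identity principle is applied on a \emph{connected} open set contained in both domains. The neighborhoods $U$ and $V$ themselves need not be connected, which is why we shrink to the ball $B$ around $\gamma(t_0)$ and use Proposition~\ref{prop:metric-geodesic} for its connectedness. We must also check that germ equality at one point of $\gamma(K)$ gives agreement on an open subset of $B$. This is immediate from the definition of a germ: two maps with the same germ agree on some neighborhood of the point, and intersecting that neighborhood with $B$ gives a nonempty open set.
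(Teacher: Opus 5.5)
Your proof is correct and follows essentially the same route as the paper. Both show that the set of parameters where the two continuations agree is nonempty, open and closed in $[0,1]$, and both obtain closedness by applying the identity principle (Lemma~\ref{lem:identity-principle}) on a small ball around $\gamma(t_0)$ contained in both domains. The only difference is cosmetic: the paper gets openness directly from germ agreement near $\gamma(t_0)$ plus continuity of $\gamma$, while your unified dichotomy also uses the identity principle for openness, which is harmless.
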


\begin{proof}
Let $(\mathfrak f_t)_{t\in[0,1]}$ and
$(\mathfrak g_t)_{t\in[0,1]}$ be two continuations of the same initial germ,
and put
\[
 E=\{t\in[0,1]:\mathfrak f_t=\mathfrak g_t\}.
\]
Then $0\in E$.

We first show that $E$ is relatively open.  Let $t_0\in E$.  By the
definition of continuation, after shrinking a relative interval $I$ about
$t_0$ if necessary, there are local gyrometric isometries $F$ and $G$ on
open neighborhoods of $\gamma(I)$ which represent the two families of germs
throughout $I$.  Since $\mathfrak f_{t_0}=\mathfrak g_{t_0}$, the maps $F$
and $G$ agree on a neighborhood of $\gamma(t_0)$.  By continuity of
$\gamma$, after shrinking $I$ once more we may assume that $\gamma(I)$ lies
in that neighborhood.  Hence $I\subset E$.

We next show that $E$ is relatively closed.  Suppose $t_n\in E$ and
$t_n\to t_0$.  Choose a relative interval $I$ about $t_0$ and local
isometries $F,G$ representing the two continuations on $I$.  Since their
domains are open and both contain $\gamma(t_0)$, choose $r>0$ such that
\[
 B_{\rho_1}(\gamma(t_0),r)\subset\operatorname{dom}F\cap\operatorname{dom}G.
\]
After shrinking $I$, continuity of $\gamma$ gives
$\gamma(I)\subset B_{\rho_1}(\gamma(t_0),r/2)$.  For all sufficiently large
$n$ we have $t_n\in I$.  Since $t_n\in E$, the germs of $F$ and $G$ at
$\gamma(t_n)$ coincide, so $F$ and $G$ agree on a nonempty open subset of
the connected ball $B_{\rho_1}(\gamma(t_0),r)$.  The restrictions of $F$
and $G$ to this ball are local gyrometric isometries.  By
Lemma~\ref{lem:identity-principle}, they agree on the whole ball.  In
particular, $\mathfrak f_{t_0}=\mathfrak g_{t_0}$, and hence $t_0\in E$.

Thus $E$ is nonempty, relatively open, and relatively closed in the connected
interval $[0,1]$.  Therefore $E=[0,1]$.
\end{proof}

\begin{lemma}[Small-cell consistency]\label{lem:small-cell}
Let $F_a:B_{\rho_1}(a,R)\to B_{\rho_2}(a',R)$ be a centered ball isometry
between standard-norm GGVs.  Let $b,c,d\in G_1$ satisfy
\[
 \rho_1(u,v)<\frac{R}{8}
 \qquad (u,v\in\{a,b,c,d\}).
\]
Let $F_b$ and $F_c$ be the one-step continuations of $F_a$ from $a$ to $b$
and from $a$ to $c$, respectively, constructed as in
Lemma~\ref{lem:path-continuation}.  Let $F_d^{(b)}$ be the one-step
continuation of $F_b$ from $b$ to $d$, and let $F_d^{(c)}$ be the one-step
continuation of $F_c$ from $c$ to $d$.  Then the two terminal germs at $d$
coincide.  In fact,
\[
 F_d^{(b)}=F_d^{(c)}
 \qquad\hbox{on }B_{\rho_1}(d,R/4).
\]
\end{lemma}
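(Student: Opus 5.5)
The plan is to show that all four maps $F_b,F_c,F_d^{(b)},F_d^{(c)}$ agree with $F_a$ on a common small ball. Once that is done, the identity principle (Lemma~\ref{lem:identity-principle}) on the connected ball $B_{\rho_1}(d,R/4)$ gives the claim. The key input is the one-step agreement established inside the proof of Lemma~\ref{lem:path-continuation}, but with radii tracked carefully.

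First, recall that a one-step continuation from $u$ to $v$ with $\rho_1(u,v)<R/4$ produces a centered ball isometry $F_v$ on $B_{\rho_1}(v,R)$. By \eqref{eq:consecutive-agreement} it agrees with $F_u$ on $B_{\rho_1}(v,R/2)$. Since $\rho_1(u,v)<R/8$ here, $B_{\rho_1}(u,R/2-R/8)\subset B_{\rho_1}(v,R/2)$. I will state this explicitly, since the original argument only recorded agreement near $v$. Applying it:
\begin{itemize}
\item $F_b=F_a$ on $B_{\rho_1}(b,R/2)$ and $F_c=F_a$ on $B_{\rho_1}(c,R/2)$.
\item $F_d^{(b)}=F_b$ on $B_{\rho_1}(d,R/2)$ and $F_d^{(c)}=F_c$ on $B_{\rho_1}(d,R/2)$.
\end{itemize}

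Second, intersect. Set $W=B_{\rho_1}(d,R/4)$. Every $w\in W$ satisfies $\rho_1(w,b)<R/4+R/8<R/2$, and likewise $\rho_1(w,c)<R/2$. Hence on $W$ we have
\[
F_d^{(b)}=F_b=F_a=F_c=F_d^{(c)},
\]
where each equality holds because $W$ lies in the relevant agreement ball. All the maps are defined there, since $W\subset B_{\rho_1}(a,R)$, and the same holds for the balls around $b,c,d$. The identity principle is not even needed in the end: the agreement holds pointwise on all of $W$. If one only had agreement on a smaller ball, Lemma~\ref{lem:identity-principle} on the connected ball $W$ would finish the argument instead, since $W$ is path connected by Proposition~\ref{prop:metric-geodesic}.

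The main obstacle is the first step: checking the one-step agreement \eqref{eq:consecutive-agreement} with the stated radius. That proof obtains agreement near $m_j$ from Corollary~\ref{cor:local-reflection} and then propagates it by the identity principle on $B_{\rho_1}(a_{j+1},R/2)$. I must verify two things there. First, that ball lies in both domains; this uses $\rho_1(a_j,a_{j+1})<R/4$. Second, the corollary's neighborhood of $m_j$ is nonempty and lies in that ball; this holds since the corollary gives agreement on $B_{\rho_1}(m_j,R/4)$ by its proof with the $R$-ball centered at $m_j$. I will recheck that radius bookkeeping, using Lemma~\ref{lem:ball-restriction} to recenter at $m_j$ with radius $R-\rho_1(a_j,m_j)>7R/8$. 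Everything else is the triangle inequality.
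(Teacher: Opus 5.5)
Your proof is correct and essentially the same as the paper's. Both get pairwise agreement of consecutive one-step continuations on suitable balls (via Corollary~\ref{cor:local-reflection} and the identity principle) and then intersect on $B_{\rho_1}(d,R/4)$ by the triangle inequality. The paper works on the balls $B_{\rho_1}(u,3R/4)$ centered at the starting vertices, while you reuse \eqref{eq:consecutive-agreement} on the balls $B_{\rho_1}(v,R/2)$ centered at the terminal vertices, which works because $R/4+R/8<R/2$.

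One harmless slip in your side check: $F_j$ is not defined on $B_{\rho_1}(m_j,R)$. After recentering you get agreement only on $B_{\rho_1}(m_j,r/4)$ with $r<R-\rho_1(a_j,m_j)$, not on $B_{\rho_1}(m_j,R/4)$. Any nonempty open neighborhood of $m_j$ suffices for the identity principle, so the argument is unaffected.
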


\begin{proof}
A one-step continuation agrees with the preceding map on a neighborhood of the
midpoint of the corresponding edge, by Corollary~\ref{cor:local-reflection}.
Since both maps are local gyrometric isometries, the identity principle
propagates this equality across any connected open subset of their common
domain.

For example,
\[
 B_{\rho_1}(a,3R/4)
 \subset B_{\rho_1}(a,R)\cap B_{\rho_1}(b,R),
\]
because $\rho_1(a,b)<R/8$.  Hence
\begin{equation}\label{eq:small-cell-ab}
 F_a=F_b\qquad\hbox{on }B_{\rho_1}(a,3R/4).
\end{equation}
Similarly,
\begin{equation}\label{eq:small-cell-ac}
 F_a=F_c\qquad\hbox{on }B_{\rho_1}(a,3R/4),
\end{equation}
and, using the edges $b$--$d$ and $c$--$d$,
\begin{equation}\label{eq:small-cell-bd}
 F_b=F_d^{(b)}\qquad\hbox{on }B_{\rho_1}(b,3R/4),
\end{equation}
\begin{equation}\label{eq:small-cell-cd}
 F_c=F_d^{(c)}\qquad\hbox{on }B_{\rho_1}(c,3R/4).
\end{equation}

If $z\in B_{\rho_1}(d,R/4)$, then for each $u\in\{a,b,c\}$,
\[
 \rho_1(u,z)
 \le \rho_1(u,d)+\rho_1(d,z)
 <\frac{R}{8}+\frac{R}{4}
 =\frac{3R}{8}<\frac{3R}{4}.
\]
Thus $z$ belongs to all four open sets appearing in
\eqref{eq:small-cell-ab}--\eqref{eq:small-cell-cd}.  Consequently,
\[
 F_d^{(b)}(z)=F_b(z)=F_a(z)=F_c(z)=F_d^{(c)}(z).
\]
This proves the asserted equality on $B_{\rho_1}(d,R/4)$ and hence equality of
the terminal germs at $d$.
\end{proof}

\begin{lemma}[Monodromy principle]\label{lem:monodromy}
Let $G_1,G_2$ be standard-norm GGVs and suppose that $G_1$ is simply
connected.  A local gyrometric-isometry germ from $G_1$ to $G_2$ has a unique
global continuation as a local gyrometric isometry
\[
 F:G_1\longrightarrow G_2.
\]
\end{lemma}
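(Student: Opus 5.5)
We prove the monodromy principle by the standard continuation argument, using the lemmas already established. Fix a base point $a_0\in G_1$ and a representative centered ball isometry $F_0:B_{\rho_1}(a_0,R_0)\to B_{\rho_2}(b_0,R_0)$ of the given germ. For $x\in G_1$ we choose a path $\gamma$ from $a_0$ to $x$. Such a path exists because $G_1$ is path connected by Proposition~\ref{prop:metric-geodesic}; one can even take a concatenation of gyrosegments. Lemma~\ref{lem:path-continuation} continues $\mathfrak f_0$ along $\gamma$, and Lemma~\ref{lem:fixed-path-uniqueness} shows that the terminal germ $\mathfrak f^\gamma_1$ at $x$ depends only on $\gamma$. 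We then define $F(x)$ to be the value of this terminal germ at $x$.

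The main step, and the expected obstacle, is showing that $\mathfrak f^\gamma_1$ depends only on the endpoint $x$, and not on the choice of $\gamma$. Let $\gamma_0,\gamma_1$ be two paths from $a_0$ to $x$. Simple connectivity gives a homotopy $H:[0,1]^2\to G_1$ rel endpoints.

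First we continue the germ along the bottom edge and then along every vertical line of the square. Following the usual argument, we obtain a local isometry germ $\mathfrak f_{(s,t)}$ at each $H(s,t)$, with the property that each point of the square has a neighborhood $U$ and a single local isometry representing all germs over $H(U)$. This uniformity is the delicate point, because the radius $R$ on which a germ is a centered ball isometry can vary from germ to germ.

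To control this, we subdivide. By compactness of $H([0,1]^2)$ and uniform continuity, we choose a grid fine enough that each small cell has image of diameter less than $R/8$, where $R$ is a radius valid for the relevant continued maps on that cell. We then apply Lemma~\ref{lem:small-cell} to each grid cell: the two ways of going around a cell give the same terminal germ. Here it matters that one-step continuations built as in \eqref{eq:continued-map} keep the same radius $R$, since each step is a centered ball isometry of radius $R$. Because the construction of Lemma~\ref{lem:path-continuation} propagates a single radius $R$ along the whole path, we can use the radius $R_0$ of $F_0$ throughout, and this yields the uniformity needed. Finally, we pass from one grid row to the next by moving the path across one cell at a time. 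Each move leaves the terminal germ unchanged, and the vertical edges over $s=0,1$ are constant (they are $a_0$ and $x$), so the terminal germs of $\gamma_0$ and $\gamma_1$ coincide.

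Well-definedness then shows that $F$ is a local gyrometric isometry. Near $x$, extend $\gamma$ by gyrosegments inside a ball on which the terminal representative $F_x$ is a centered isometry. Lemma~\ref{lem:fixed-path-uniqueness} shows that the germ at a nearby $y$ is the germ of $F_x$, so $F=F_x$ near $x$. In particular the germ of $F$ at $a_0$ is $\mathfrak f_0$, taking the constant path.

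Uniqueness follows from Lemma~\ref{lem:identity-principle}. Any two global continuations are local gyrometric isometries on the connected open set $G_1$, and they agree near $a_0$, so they agree everywhere.
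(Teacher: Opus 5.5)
Your proposal is correct and follows essentially the same route as the paper. Both arguments subdivide a fixed-endpoint homotopy into a grid whose cells have image diameter below $R/8$, keep the initial radius unchanged through the one-step reflection continuations, apply Lemma~\ref{lem:small-cell} cell by cell with constant vertical sides, use a gyrosegment to show $F$ is a local gyrometric isometry, and use the identity principle for uniqueness.

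Two small differences. First, your preliminary two-parameter continuation along vertical lines is unnecessary once the uniform radius $R_0$ is fixed. Second, you leave implicit a point the paper states explicitly: each one-step reflection continuation is a continuation along its grid edge, because the edge image lies in the $R/8$-ball about either endpoint. Lemma~\ref{lem:fixed-path-uniqueness} then identifies it with any other continuation along that edge.
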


\begin{proof}
By Lemma~\ref{lem:path-continuation}, the germ can be continued along every
path, and Lemma~\ref{lem:fixed-path-uniqueness} shows that the continuation
along a fixed path is unique.  We prove independence of the path itself by a
finite grid argument, using Lemma~\ref{lem:small-cell} on each cell.

Fix a representative centered ball isometry of radius $R$ for the initial
germ.  The construction in Lemma~\ref{lem:path-continuation} transports this
same radius at every continuation step.  Let $\gamma_0$ and $\gamma_1$ be two
paths with the same endpoints.  Since $G_1$ is simply connected, choose a
fixed-endpoint homotopy
\[
 H:[0,1]^2\longrightarrow G_1
\]
from $\gamma_0$ to $\gamma_1$.  By uniform continuity of $H$, choose a finite
rectangular grid so fine that the image under $H$ of each grid cell has
diameter less than $R/8$.
At every vertex of this grid, the continued germ admits a representative
which is a centered ball isometry of the same radius $R$.  Indeed, starting
with the initial representative, each one-step reflection transport in
Lemma~\ref{lem:path-continuation} preserves that radius; induction along any
finite grid-edge path therefore supplies such a representative at its terminal
vertex.  This radius assertion does not require path independence, which is
proved below using Lemma~\ref{lem:small-cell}.

Continue the initial germ along the bottom edge of the grid.  This continuation
is unambiguous by Lemma~\ref{lem:fixed-path-uniqueness}.  We now propagate the
germs upward, row by row.  Consider one grid cell and suppose the germ at its
lower-left vertex has already been fixed.  Continuing first along the lower
edge and then the right edge gives a germ at the upper-right vertex;
continuing first along the left edge and then the upper edge gives another
germ there.  The four points of $G_1$ obtained by applying $H$ to the
vertices of the cell have pairwise distance less than $R/8$.  The one-step
reflection continuations between the corresponding vertices are
continuations along the grid edges, because the image of each edge is
contained in the $R/8$-ball about either endpoint; by
Lemma~\ref{lem:fixed-path-uniqueness}, they therefore give the same edge
germs as any other continuation along those edges.  Lemma~\ref{lem:small-cell}
now shows that the two terminal germs at the upper-right vertex coincide.
Thus the germ at the opposite vertex is independent of which two sides of the
cell are used.  Induction over the finitely many cells shows that the
continuation along the top edge has the same terminal germ as the
continuation along the bottom edge.  Hence homotopic fixed-endpoint paths give
the same terminal germ.  Here the two vertical boundary paths are constant
because the homotopy fixes the endpoints, so continuation along either
boundary leaves its germ unchanged.

Since $G_1$ is simply connected, any two paths with the same endpoints are
fixed-endpoint homotopic.  Therefore the terminal germ depends only on the
endpoint.  For $x\in G_1$, define $F(x)$ as the value at $x$ of this terminal
germ.  This is well defined.

For completeness, we verify explicitly that $F$ is a local gyrometric
isometry.  Fix $x\in G_1$ and choose a path from the base point to $x$.
Represent its terminal germ by a centered ball isometry
\[
 F_x:B_{\rho_1}(x,r)\longrightarrow B_{\rho_2}(F(x),r)
\]
for some $r>0$.  If $y\in B_{\rho_1}(x,r/2)$, concatenate the chosen path to
$x$ with the canonical gyrosegment from $x$ to $y$.  By
Proposition~\ref{prop:metric-geodesic}, this final segment is contained in
$B_{\rho_1}(x,r/2)$, and $F_x$ itself supplies a continuation along it.  By
path independence, the terminal germ at $y$ is therefore $[F_x]_y$.  Hence
\[
 F(y)=F_x(y),\qquad y\in B_{\rho_1}(x,r/2).
\]
Thus $F$ agrees near every point with a centered ball isometry and is a local
gyrometric isometry.  Uniqueness follows again from
Lemma~\ref{lem:identity-principle}.
\end{proof}

\begin{theorem}[Local-to-global extension theorem]\label{thm:general-local-global}
For $i=1,2$, let $G_i$ be standard-norm GGVs with gyrometrics
$\rho_i$.  Let $U_i\subset G_i$ be nonempty connected open subsets.  If
\[
 T:U_1\longrightarrow U_2
\]
is a bijection satisfying
\[
 \rho_2(Tx,Ty)=\rho_1(x,y),\qquad x,y\in U_1,
\]
then $T$ extends uniquely to a bijective gyrometric isometry
\[
 \widetilde T:G_1\longrightarrow G_2.
\]
\end{theorem}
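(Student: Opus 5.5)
We start by noting that $T$ is a local gyrometric isometry in the sense of Definition~\ref{def:local-isometry}. Fix $x\in U_1$ and choose $R>0$ with $B_{\rho_1}(x,R)\subset U_1$ and $B_{\rho_2}(Tx,R)\subset U_2$. Distance preservation gives $T(B_{\rho_1}(x,R))\subset B_{\rho_2}(Tx,R)$. Conversely, if $w\in B_{\rho_2}(Tx,R)$, then $w\in U_2$, so $w=Tz$ for some $z\in U_1$. Then $\rho_1(x,z)=\rho_2(Tx,w)<R$, so $z\in B_{\rho_1}(x,R)$. Hence $T$ restricts to a centered ball isometry at every point.

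Since $G_1$ is simply connected by Corollary~\ref{cor:contractible}, Lemma~\ref{lem:monodromy} applies to the germ of $T$ at any base point $x_0\in U_1$. It yields a local gyrometric isometry $\widetilde T:G_1\to G_2$ whose germ at $x_0$ is $[T]_{x_0}$. Both $\widetilde T|_{U_1}$ and $T$ are local isometries on the connected open set $U_1$ that agree near $x_0$, so Lemma~\ref{lem:identity-principle} gives $\widetilde T=T$ on $U_1$. The same lemma, applied on $G_1$, shows that any two extensions agree everywhere, which gives uniqueness. Running the identical argument with $T^{-1}:U_2\to U_1$ (also a bijective isometry, and $G_2$ is simply connected) produces a local isometry $\widetilde S:G_2\to G_1$ extending $T^{-1}$.

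Next we show that $\widetilde S\circ\widetilde T=\id_{G_1}$ and $\widetilde T\circ\widetilde S=\id_{G_2}$. The composition $\widetilde S\circ\widetilde T$ is a local gyrometric isometry of $G_1$ by the composition remark. It equals the identity on the open set $U_1$, because $\widetilde T(U_1)=U_2$ and $\widetilde S=T^{-1}$ there. Since $G_1$ is connected, Lemma~\ref{lem:identity-principle} gives $\widetilde S\circ\widetilde T=\id$, and symmetrically $\widetilde T\circ\widetilde S=\id$. So $\widetilde T$ is a bijection and a local isometry with local-isometry inverse.

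The main obstacle is upgrading this to a global isometry, i.e.\ $\rho_2(\widetilde Tx,\widetilde Ty)=\rho_1(x,y)$ for all $x,y$. We use the geodesics of Proposition~\ref{prop:metric-geodesic}. A local isometry is $1$-Lipschitz along any curve: cover the curve by finitely many balls on which the map is isometric, and chain the distances through a fine partition. Applying this to $\gamma_{x,y}$, whose length is $\rho_1(x,y)$, and using the triangle inequality in $G_2$, gives $\rho_2(\widetilde Tx,\widetilde Ty)\le\rho_1(x,y)$. The same argument for $\widetilde S$ on $\gamma_{\widetilde Tx,\widetilde Ty}$ gives the reverse inequality. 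The one care point is that the partition must be fine relative to a Lebesgue number of the cover along the compact segment, so that each consecutive pair lies in a single isometric ball. With that, $\widetilde T$ is a bijective gyrometric isometry.
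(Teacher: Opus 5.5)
Your proposal is correct and follows the paper's proof essentially step for step: you take the germ of $T$ at a base point, continue it by monodromy to $\widetilde T$ and $\widetilde S$, and use the identity principle three times (for $\widetilde T|_{U_1}=T$, for $\widetilde S\circ\widetilde T=\id$, and for uniqueness). You then obtain both distance inequalities from a Lebesgue-number partition of the gyrosegment, exactly as the paper does. Your explicit check that $T$ restricts to a centered ball isometry at every point of $U_1$, via surjectivity onto $U_2$, spells out a step the paper only asserts.
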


\begin{proof}
By Corollary~\ref{cor:contractible}, both $G_1$ and $G_2$ are simply connected.
Choose $a_0\in U_1$.  Since $U_1,U_2$ are open and $T$ is a bijective
isometry, there exists $R>0$ such that
\[
 B_{\rho_1}(a_0,R)\subset U_1,
 \qquad
 B_{\rho_2}(T(a_0),R)\subset U_2,
\]
and
\[
 T\bigl(B_{\rho_1}(a_0,R)\bigr)
 =B_{\rho_2}(T(a_0),R).
\]
Thus the restriction of $T$ to this ball represents a local-isometry germ.
By Lemmas~\ref{lem:path-continuation} and \ref{lem:monodromy}, it has a unique
global continuation
\[
 \widetilde T:G_1\to G_2
\]
as a local gyrometric isometry.

The restriction $T:U_1\to U_2$ is itself a local gyrometric isometry.  Since
$\widetilde T$ and $T$ agree on the initial ball, the identity principle gives
\[
 \widetilde T|_{U_1}=T.
\]
Apply the same construction to $T^{-1}:U_2\to U_1$.  Since $G_2$ is simply
connected, we obtain a global local gyrometric isometry
\[
 \widetilde S:G_2\to G_1
\]
extending $T^{-1}$.  The compositions $\widetilde S\circ\widetilde T$ and
$\widetilde T\circ\widetilde S$ agree with the corresponding identity maps on
nonempty open sets.  Proposition~\ref{prop:metric-geodesic} shows that each
$G_i$ is connected, so the identity principle yields
\[
 \widetilde S\circ\widetilde T=\id_{G_1},
 \qquad
 \widetilde T\circ\widetilde S=\id_{G_2}.
\]
Thus $\widetilde T$ is bijective.

It remains to prove global distance preservation.  Let $x,y\in G_1$ and let
$\gamma_{x,y}$ be the canonical geodesic from
\eqref{eq:gyrosegment}.  The centered neighborhoods on which $\widetilde T$
is an isometry form an open cover of the compact set
$\gamma_{x,y}([0,1])$.  By the Lebesgue number lemma, this cover has a
Lebesgue number $\delta>0$.  Choose a partition
$0=t_0<\cdots<t_N=1$ so fine that each subarc
$\gamma_{x,y}([t_j,t_{j+1}])$ has diameter less than $\delta$.  Each such
subarc is then contained in a neighborhood on which $\widetilde T$ is an
isometry.  Therefore
\[
\begin{aligned}
 \rho_2(\widetilde T(x),\widetilde T(y))
 &\le \sum_{j=0}^{N-1}
 \rho_2\bigl(\widetilde T(\gamma(t_j)),
              \widetilde T(\gamma(t_{j+1}))\bigr)\\
 &=\sum_{j=0}^{N-1}
 \rho_1(\gamma(t_j),\gamma(t_{j+1}))\\
 &=\rho_1(x,y),
\end{aligned}
\]
where the last equality follows from
\eqref{eq:gyrosegment-distance}.  Applying the same argument to
$\widetilde S=\widetilde T^{-1}$ gives the reverse inequality.  Hence
$\widetilde T$ is a global gyrometric isometry.

If two global extensions existed, they would agree on the nonempty open set
$U_1$ and hence everywhere by the identity principle.
\end{proof}

\begin{remark}[The real value-line problem]\label{rem:real-value-line-problem}
To remove the simply connectedness hypothesis in the theorem above, the
continuity hypothesis in Definition~\ref{def:standard-norm} is used to pass
from the algebraic scalar rays to a global contraction.  Indeed,
for a GGV with real value-line structure, the identities
\[
 \rho(r\otimes a,s\otimes a)=|r-s|\rho(e,a)
\]
still describe each individual scalar ray.  Without continuity of the fixed
scalar maps, however, one cannot conclude that
\[
 H:G\times[0,1]\longrightarrow G,\qquad H(a,s)=(1-s)\otimes a,
\]
is continuous.  Thus neither contractibility nor simple connectedness follows
from real value-line structure alone by the present argument.

Accordingly, the local-to-global theorem above is not asserted for arbitrary
GGVs with real value-line structure.  The present argument does not decide whether a local gyrometric-isometry germ
on such a GGV can acquire nontrivial monodromy along a loop.  We leave open
whether there exists a real value-line GGV for which continuation of a local
gyrometric isometry around a loop returns a different germ.  Such monodromy would be the obstruction to a global local
extension beyond the standard-norm setting.
\end{remark}

\begin{remark}[Necessity of connectedness]\label{rem:connectedness}
Connectedness cannot be dropped, even within the standard-norm class.
Hatori's example \cite[Example~10]{HatoriExtension} uses two disjoint open
balls in $C_{\mathbb R}(\{1,2\})$ with the supremum norm and different
isometries on the two components; the chosen separation of the balls ensures
preservation of cross-component distances as well.  The resulting isometry
has no global extension.  Componentwise exponentiation transfers this example
to the positive invertible cone of $\mathbb C^2$ with Thompson's metric.

This example does not concern the absence of real value-line structure: it
already lies in a normed space and hence satisfies the stronger standard-norm
hypothesis.  It instead shows that the connectedness assumption in
Theorem~\ref{thm:general-local-global} is essential.  In contrast, without
real value-line structure the theorem is not asserted, for the reasons in
Remark~\ref{rem:real-value-line-problem}.
\end{remark}

\section{Examples and permanence properties}

The purpose of this section is to record examples which are independent of the
JB-algebra construction and to show that the standard-norm class is stable
under natural operations.

\begin{remark}[Continuity of the concrete examples]\label{rem:concrete-continuity}
The general radial-standardization and bounded-mapping-space constructions
below assert only real value-line structure, because no continuity hypothesis
is imposed there.  Every subsequently asserted standard-norm GGV satisfies the
fixed-scalar continuity in Definition~\ref{def:standard-norm}; it is therefore
contractible and simply connected by Corollary~\ref{cor:contractible}.
\end{remark}

\begin{example}[Normed spaces]\label{ex:normed-space}
Let $X$ be a nonzero real normed space.  With ordinary addition and scalar
multiplication and with $\phi=\id_X$, the quadruple
$(X,+,\cdot,\id_X)$ is a standard-norm GGV.  Its gyrometric is the
ordinary norm metric.
\end{example}

\begin{remark}[Non-Hilbertian examples]\label{rem:nonhilbert-linear}
Example~\ref{ex:normed-space} already places many standard-norm GGVs outside
the classical real inner product gyrovector-space setting.  For example,
$\ell^p$ and $L^p(\mu)$ for $1\le p\le\infty$, $p\ne2$, as well as $c_0$ and
$C(K)$ with the supremum norm, are standard-norm GGVs.  Except in Hilbertian
special cases their norms fail the parallelogram identity, so the metric
structure is not induced by an inner product.  Thus the standard-norm
condition is not a disguised inner-product assumption.
\end{remark}

\begin{proposition}[Radial standardization]\label{prop:radial-standardization}
Let $(G,\oplus,\otimes,\phi_0)$ be a GGV such that $\phi_0(e)=0$.  Write
\[
 L=\|\phi_0(G)\|
\]
for its one-dimensional norm-value space.  Suppose that there is a strictly
increasing vector-space isomorphism
\[
 \chi:(L,\oplus',\otimes')\longrightarrow(\mathbb R,+,\cdot)
\]
with $\chi(0)=0$.  Define $\psi:G\to V$ by $\psi(e)=0$ and, for $a\ne e$,
\begin{equation}\label{eq:radial-standardization}
 \psi(a)=
 \frac{\chi(\|\phi_0(a)\|)}{\|\phi_0(a)\|}\,\phi_0(a).
\end{equation}
Then $(G,\oplus,\otimes,\psi)$ is a GGV with real value-line structure.  Its gyrometric is
\begin{equation}\label{eq:standardized-metric}
 \rho_\psi(a,b)=\chi\bigl(\|\phi_0(a\ominus b)\|\bigr).
\end{equation}
\end{proposition}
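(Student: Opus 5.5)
The plan is to keep $G$, $\oplus$, $\otimes$ fixed, so every purely gyrogroup-theoretic axiom is untouched, and to check only the axioms that involve the coordinate map. The first observation does most of the work: for $a\ne e$ the vector $\psi(a)$ is a positive multiple of $\phi_0(a)$. Indeed $\chi$ is strictly increasing with $\chi(0)=0$, so $\chi(\|\phi_0(a)\|)>0$. Taking norms gives
\[
 \|\psi(a)\|=\chi(\|\phi_0(a)\|)\quad\text{and}\quad \frac{\psi(a)}{\|\psi(a)\|}=\frac{\phi_0(a)}{\|\phi_0(a)\|}\qquad(a\ne e),
\]
and both identities also hold trivially at $a=e$. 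Injectivity of $\psi$ follows directly. If $\psi(a)=\psi(b)$, then $\|\phi_0(a)\|=\|\phi_0(b)\|$ by injectivity of $\chi$ on nonnegative values. The directions also agree, so $\phi_0(a)=\phi_0(b)$ and hence $a=b$; the case where one of them equals $e$ follows from the same norm equality.

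Next I will transfer each axiom through these two identities. (GGV1)--(GGV3), (GGV5) and (GGV6) do not involve $\phi$, so they carry over unchanged. (GGV0) holds because $\|\psi(\gyr[u,v]a)\|=\chi(\|\phi_0(\gyr[u,v]a)\|)=\chi(\|\phi_0(a)\|)$. (GGV4) compares unit directions. Since $\psi$ has the same unit directions as $\phi_0$, the left side equals $\phi_0(|r|\otimes a)/\|\phi_0(r\otimes a)\|$, provided we also have $\|\psi(|r|\otimes a)\|=\|\psi(r\otimes a)\|$. That equality follows from the original (GGV7), since $\|\phi_0(|r|\otimes a)\|=\|\phi_0(r\otimes a)\|$. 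One small subtlety is that $r\otimes a\ne e$ for $r\ne0$ and $a\ne e$; I will get this from (GGV7) together with $\phi_0(e)=0$.

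For the value line, I note that $\|\psi(G)\|=\chi(L)=\mathbb R$. This uses that $\chi$ is a bijection and that $L$ is symmetric under sign while $\chi$ is odd, being linear. We give this set the ordinary operations. (GGV7) then reads $\chi(|r|\otimes'\|\phi_0(a)\|)=|r|\chi(\|\phi_0(a)\|)$, which is linearity of $\chi$. (GGV8) follows by applying the monotone map $\chi$ to the original inequality and then using additivity:
\[
 \chi\bigl(\|\phi_0(a\oplus b)\|\bigr)\le\chi\bigl(\|\phi_0(a)\|\oplus'\|\phi_0(b)\|\bigr)=\chi(\|\phi_0(a)\|)+\chi(\|\phi_0(b)\|).
\]
Finally, the gyrometric formula \eqref{eq:standardized-metric} is immediate from $\|\psi(x)\|=\chi(\|\phi_0(x)\|)$ with $x=a\ominus b$.

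The main obstacle is (GGVV) for $\psi$, namely making sure that the set $\{\pm\|\psi(a)\|\}$ really is all of $\mathbb R$ (or at least a one-dimensional space under the ordinary operations), and not a proper subset. Because $\chi$ is a vector-space isomorphism onto $\mathbb R$, the set $\chi(L)$ is the whole line, and $L=\{\pm\|\phi_0(a)\|\}$ by definition. This gives exactly $\|\psi(G)\|=\{\pm\chi(\|\phi_0(a)\|)\}=\chi(L)=\mathbb R$. I expect the rest to be routine bookkeeping, with the only care needed being the case distinctions at $e$.
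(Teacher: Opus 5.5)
Your proposal is correct and follows essentially the same route as the paper: positivity of the radial factor, the identities $\|\psi(a)\|=\chi(\|\phi_0(a)\|)$ and equality of unit directions, then transfer of (GGV0), (GGV4), (GGV7) and (GGV8) through the monotonicity and linearity of $\chi$. One small repair is needed in the value-line step: linearity only gives $\chi((-1)\otimes' x)=-\chi(x)$, and the $\oplus'$-inverse need not be real negation, so argue instead that a strictly increasing bijection with $\chi(0)=0$ maps $L\cap[0,\infty)=\{\|\phi_0(a)\|:a\in G\}$ onto $[0,\infty)$, whence $\{\pm\|\psi(a)\|:a\in G\}=\mathbb R$.
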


\begin{proof}
Since $\chi$ is strictly increasing and $\chi(0)=0$, the coefficient in
\eqref{eq:radial-standardization} is positive for $a\ne e$.  Thus $\psi$ has
the same radial direction as $\phi_0$.  If $\psi(a)=\psi(b)$, then their norms
are equal, hence
$\chi(\|\phi_0(a)\|)=\chi(\|\phi_0(b)\|)$; injectivity of $\chi$ and equality
of radial directions give $\phi_0(a)=\phi_0(b)$, so $a=b$.  Hence $\psi$ is
injective.

Axiom (GGV0) follows because $\|\phi_0(\gyr[u,v]a)\|=\|\phi_0(a)\|$.
Axioms (GGV1)--(GGV3), (GGV5), and (GGV6) do not depend on the choice of the
coordinate map.  For (GGV4), the original (GGV4) shows that positive scalar
multiplication preserves the radial direction of $\phi_0$, and the same is
therefore true for $\psi$.  Moreover,
\[
 \|\psi(r\otimes a)\|
 =\chi\bigl(\|\phi_0(r\otimes a)\|\bigr)
 =\chi\bigl(|r|\otimes'\|\phi_0(a)\|\bigr)
 =|r|\,\|\psi(a)\|,
\]
which is (GGV7) with the ordinary scalar operation.  Finally, by monotonicity
and linearity of $\chi$,
\[
\begin{aligned}
 \|\psi(a\oplus b)\|
 &=\chi\bigl(\|\phi_0(a\oplus b)\|\bigr)\\
 &\le \chi\bigl(\|\phi_0(a)\|\oplus'\|\phi_0(b)\|\bigr)\\
 &=\|\psi(a)\|+\|\psi(b)\|,
\end{aligned}
\]
so (GGV8) holds.  Since $\chi$ maps the norm-value line onto $\mathbb R$, the
new norm-value structure is standard.  Formula \eqref{eq:standardized-metric}
is immediate from the definition.
\end{proof}

\begin{corollary}[M\"obius and Einstein gyrovector spaces]
\label{cor:mobius-einstein-standard}
Let $H$ be a nonzero real inner product space, $s>0$, and
\[
 H_s=\{x\in H:\|x\|<s\}.
\]
Equip $H_s$ with either the M\"obius or the Einstein gyrovector-space
operations.  Define
\begin{equation}\label{eq:rapidity-coordinate}
 \psi_s(0)=0,\qquad
 \psi_s(x)=\operatorname{artanh}\!\left(\frac{\|x\|}{s}\right)
              \frac{x}{\|x\|}\quad(x\ne0).
\end{equation}
Then $(H_s,\oplus,\otimes,\psi_s)$ is a standard-norm GGV.  Its
gyrometric is
\begin{equation}\label{eq:rapidity-metric}
 \rho(x,y)=\operatorname{artanh}\!\left(
 \frac{\|x\ominus y\|}{s}\right).
\end{equation}
For the M\"obius model this is the Poincaré distance in the normalization used in
\cite{Watanabe2023}, and for the Einstein model it is the rapidity metric.
\end{corollary}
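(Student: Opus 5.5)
The plan is to apply Proposition~\ref{prop:radial-standardization} to the classical GGV structure on the M\"obius and Einstein balls, and then to check fixed-scalar continuity separately.

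\emph{Starting structure.} We take as known, from \cite{AH2015,AHrevisited}, that $(H_s,\oplus,\otimes,\phi_0)$ with $\phi_0(x)=x$ is a GGV for both models. Its norm-value line is $L=(-s,s)$, equipped with the one-dimensional M\"obius (respectively Einstein) addition and scalar multiplication. These agree in dimension one: $\alpha\oplus'\beta=(\alpha+\beta)/(1+\alpha\beta/s^2)$ and $r\otimes'\alpha=s\tanh(r\operatorname{artanh}(\alpha/s))$. Clearly $\phi_0(0)=0$.

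\emph{The linearization.} Set $\chi(\alpha)=\operatorname{artanh}(\alpha/s)$. This is strictly increasing, vanishes at $0$, and maps $(-s,s)$ onto $\mathbb R$. The addition formula for $\tanh$ gives $\chi(\alpha\oplus'\beta)=\chi(\alpha)+\chi(\beta)$. The explicit form of $\otimes'$ gives $\chi(r\otimes'\alpha)=r\chi(\alpha)$. Hence $\chi$ is a strictly increasing vector-space isomorphism onto $(\mathbb R,+,\cdot)$.

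With this $\chi$, formula \eqref{eq:radial-standardization} is exactly \eqref{eq:rapidity-coordinate}. Proposition~\ref{prop:radial-standardization} then gives real value-line structure, and \eqref{eq:standardized-metric} gives \eqref{eq:rapidity-metric}.

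\emph{Identifying the metric.} For the M\"obius model with the usual normalization, $2s\operatorname{artanh}(\|x\ominus y\|/s)$ is the Poincar\'e distance, so the claim matches \cite{Watanabe2023} up to its normalization constant. For the Einstein model, $\operatorname{artanh}(\|x\ominus y\|/s)$ is by definition the rapidity of the relative velocity.

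\emph{Fixed-scalar continuity.} This is the step needing actual work. Fix $r$ and let $S_r(a)=r\otimes a$.

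The metric $\rho$ is $\chi$ applied to $\|x\ominus y\|$. The known formulas show that $\|x\ominus y\|\to0$ if and only if $\|x-y\|\to0$, locally uniformly on $H_s$. For M\"obius this follows from $\|x\ominus y\|^2=\|x-y\|^2/(\text{positive expression bounded away from }0)$; the Einstein case is similar. So the $\rho$-topology coincides with the norm topology on $H_s$.

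Next, $r\otimes a=s\tanh\bigl(r\operatorname{artanh}(\|a\|/s)\bigr)\,a/\|a\|$ for $a\ne0$, and $r\otimes 0=0$. This is norm-continuous for $a\neq0$. At $a=0$ it is continuous because $\|r\otimes a\|\le C_r\|a\|$ near $0$.

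Hence $S_r$ is continuous and the structure is standard-norm. The main obstacle is only the careful topological comparison near the boundary sphere $\|x\|\to s$. This is handled locally, since continuity is a local property and every point has a neighborhood with $\|x\|\le s'<s$.
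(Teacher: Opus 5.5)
Your proposal is correct and follows essentially the same route as the paper. You apply radial standardization with $\chi(u)=\operatorname{artanh}(u/s)$ on the common one-dimensional value line $(-s,s)$, then observe that the rapidity metric induces the ordinary ball topology and that the explicit scalar multiplication $s\tanh(r\operatorname{artanh}(\|x\|/s))\,x/\|x\|$ is continuous. You make explicit what the paper leaves implicit: the $\tanh$ addition formula, the one-dimensional agreement of M\"obius and Einstein addition, and the M\"obius distance formula. For the reverse implication in your topology comparison you also need the upper bound $1-2\langle x,y\rangle/s^2+\|x\|^2\|y\|^2/s^4\le 4$, not only the lower bound you cite; this bound is trivial.
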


\begin{proof}
The M\"obius and Einstein balls are real inner product gyrovector spaces; see
\cite{Ungar2008,Watanabe2016}.  Their one-dimensional norm-value space is the
interval $(-s,s)$ with Einstein addition and the corresponding scalar
multiplication.  The map
\[
 \chi_s(u)=\operatorname{artanh}(u/s)
\]
is a strictly increasing vector-space isomorphism of this one-dimensional
space onto the ordinary real line.  Proposition~\ref{prop:radial-standardization}
therefore gives \eqref{eq:rapidity-coordinate} and
\eqref{eq:rapidity-metric}.  The metric in \eqref{eq:rapidity-metric} induces
the ordinary ball topology, and the explicit gyro-scalar multiplication
\[
 r\otimes x=
 s\tanh\!\left(r\operatorname{artanh}\frac{\|x\|}{s}\right)
 \frac{x}{\|x\|}
\]
(with the usual value at $x=0$) is jointly continuous.  Thus the standardized
GGV is continuous.
\end{proof}

\begin{proposition}[Transport by a GGV-isomorphism]\label{prop:transport-standard}
Suppose $\Theta:G_1\to G_2$ is a bijection preserving gyroaddition and scalar
multiplication.  Assume that
\[
 (G_2,\oplus_2,\otimes_2,\phi_2)
\]
is a standard-norm GGV, and put
\[
 \phi_1=\phi_2\circ\Theta.
\]
Then $(G_1,\oplus_1,\otimes_1,\phi_1)$ is a standard-norm GGV, and $\Theta$
is an isometry for the resulting gyrometrics.  If $\Theta$ and
$\Theta^{-1}$ are homeomorphisms and $G_2$ is continuous, then $G_1$ is
continuous as well.  Under these hypotheses, $G_1$ is therefore contractible
and simply connected.
\end{proposition}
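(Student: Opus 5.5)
The plan is to check the axioms of Definition~\ref{def:GGV} for $(G_1,\oplus_1,\otimes_1,\phi_1)$ one at a time by transporting them through $\Theta$. The metric and topological claims then follow from the identity $\rho_1=\rho_2\circ(\Theta\times\Theta)$.

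First I collect the algebraic consequences of $\Theta$ preserving $\oplus$. It sends the identity to the identity: $\Theta(e_1)=\Theta(e_1\oplus e_1)=\Theta(e_1)\oplus\Theta(e_1)$, and left cancellation then gives $\Theta(e_1)=e_2$. It commutes with inverses, $\Theta(\ominus a)=\ominus\Theta(a)$. It intertwines gyrations, $\Theta(\gyr[u,v]a)=\gyr[\Theta u,\Theta v]\Theta(a)$, because gyrations are determined by the gyroaddition through $\gyr[u,v]a=\ominus(u\oplus v)\oplus(u\oplus(v\oplus a))$.

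Next I transport the axioms.
\begin{itemize}
\item $\phi_1$ is injective, since it is a composite of injections.
\item (GGV0), (GGV4), (GGV7) and (GGV8) become the corresponding statements for $\phi_2$ once $\Theta$ is pushed inside, using $\phi_1(r\otimes_1 a)=\phi_2(r\otimes_2\Theta a)$, $\phi_1(a\oplus_1 b)=\phi_2(\Theta a\oplus_2\Theta b)$, and $a\ne e_1\iff\Theta a\ne e_2$.
\item (GGV1)--(GGV3), (GGV5) and (GGV6) are purely algebraic, so they pull back by injectivity of $\Theta$. For (GGV6), $\gyr[r_1\otimes_1 v,r_2\otimes_1 v]$ is conjugate under $\Theta$ to the identity.
\item The norm-value set $\|\phi_1(G_1)\|$ coincides with $\|\phi_2(G_2)\|$ because $\Theta$ is surjective. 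We give it the same real operations, so it has real value-line structure.
\end{itemize}

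Then the gyrometric satisfies
\[
\rho_1(a,b)=\|\phi_2(\Theta(a\ominus_1 b))\|=\|\phi_2(\Theta a\ominus_2\Theta b)\|=\rho_2(\Theta a,\Theta b),
\]
so $\Theta$ is an isometry. In particular $\Theta$ is a homeomorphism for the gyrometric topologies, regardless of any extra hypothesis. Fixed-scalar continuity follows from $S^1_r=\Theta^{-1}\circ S^2_r\circ\Theta$. This completes the standard-norm claim.

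For the continuity clause, I read ``continuous'' as the paper's notion of joint continuity of the operations with respect to the given topologies. These pull back along $\Theta$ by the same conjugation, using that $\Theta$ and $\Theta^{-1}$ are homeomorphisms. Contractibility and simple connectedness then follow from Corollary~\ref{cor:contractible}, or directly by transporting a contraction of $G_2$.

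The only step that needs care is confirming that gyrations and the norm-value structure transport correctly. Both reduce to $\Theta$ being a surjective gyrogroup homomorphism, so I expect no real obstacle.
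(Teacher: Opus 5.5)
Your proposal is correct and is essentially the paper's argument written out axiom by axiom: transport every GGV identity through $\Theta$, obtain $\rho_1(a,b)=\rho_2(\Theta a,\Theta b)$, and get the continuity statements by conjugating scalar multiplication with $\Theta$. Your remark that a bijective isometry is automatically a homeomorphism for the gyrometric topologies is correct, and it shows that fixed-scalar continuity of $G_1$ needs no extra hypothesis, a point the paper leaves implicit.
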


\begin{proof}
All GGV identities are transported through $\Theta$, and
\[
 \|\phi_1(a\ominus_1b)\|
 =\|\phi_2(\Theta a\ominus_2\Theta b)\|.
\]
The final assertion follows by conjugating scalar multiplication by $\Theta$.
\end{proof}

\begin{example}[Proper-velocity gyrovector space]
Ungar's proper-velocity gyrovector space is isomorphic, as a real inner product
gyrovector space, to the Einstein and M\"obius models; see
\cite{Ungar2008,Abe2014,Watanabe2023}.  Transporting the standardized rapidity
coordinate of Corollary~\ref{cor:mobius-einstein-standard} therefore makes the
proper-velocity model a standard-norm GGV.  In particular,
the abstract local-to-global theorem applies to all three classical hyperbolic
models.
\end{example}

\begin{proposition}[Finite products]\label{prop:finite-products}
Let $(G_i,\oplus_i,\otimes_i,\phi_i)$, $1\le i\le n$, be nontrivial
standard-norm GGVs and let $1\le p\le\infty$.  On
\[
 G=G_1\times\cdots\times G_n
\]
use componentwise gyroaddition and scalar multiplication, and put
\[
 V=V_1\oplus_p\cdots\oplus_pV_n,
 \qquad
 \Phi(x_1,\ldots,x_n)=(\phi_1(x_1),\ldots,\phi_n(x_n)).
\]
Then $(G,\oplus,\otimes,\Phi)$ is a standard-norm GGV.  Its gyrometric is
\[
 \rho(x,y)=
 \begin{cases}
 \left(\displaystyle\sum_{i=1}^n\rho_i(x_i,y_i)^p\right)^{1/p},&p<\infty,\\[2mm]
 \displaystyle\max_i\rho_i(x_i,y_i),&p=\infty.
 \end{cases}
\]
The product is contractible and simply connected.
\end{proposition}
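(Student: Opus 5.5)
We verify the axioms of Definition~\ref{def:GGV} componentwise and then check the two standard-norm conditions of Definition~\ref{def:standard-norm}.

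\textbf{Gyrogroup and componentwise axioms.} The product of gyrocommutative gyrogroups, with componentwise operations, is a gyrocommutative gyrogroup. Its identity is $e=(e_1,\ldots,e_n)$, and its gyrations are
\[
\gyr[u,v]x=(\gyr[u_i,v_i]x_i)_i .
\]
Axioms (GGV1)--(GGV3), (GGV5) and (GGV6) hold because they hold in each coordinate. Injectivity of $\Phi$ is immediate. Write $\|\cdot\|_p$ for the $\ell^p$-norm on $V$. For (GGV0), each coordinate norm $\|\phi_i(\gyr[u_i,v_i]x_i)\|$ is unchanged, so the $\ell^p$-norm is unchanged.

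\textbf{Value line and (GGV4), (GGV7), (GGV8).} Each $G_i$ is nontrivial and its value line is $\mathbb R$ with the ordinary operations, so every $\|\phi_i(x_i)\|$ is a nonnegative real number. (GGV7) for $G_i$ gives $\|\phi_i(r\otimes x_i)\|=|r|\|\phi_i(x_i)\|$, and therefore
\[
\|\Phi(r\otimes x)\|_p=|r|\,\|\Phi(x)\|_p .
\]
Scalar rays are nonconstant in a nontrivial factor, so the set of values $\pm\|\Phi(x)\|_p$ is all of $\mathbb R$. We equip it with the ordinary operations, which makes it a real value line.

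(GGV4) is the main point needing care, since some coordinates may equal $e_i$. Where $x_i\neq e_i$, (GGV4) in $G_i$ gives
\[
\phi_i(|r|\otimes x_i)=\frac{\|\phi_i(r\otimes x_i)\|}{\|\phi_i(x_i)\|}\phi_i(x_i)=|r|\phi_i(x_i).
\]
Where $x_i=e_i$, both sides are $0$, because $r\otimes e_i=e_i$ and $\phi_i(e_i)=0$ by Lemma~\ref{lem:real-value-line-rays}. Hence $\Phi(|r|\otimes x)=|r|\Phi(x)$. Dividing by $\|\Phi(r\otimes x)\|_p=|r|\|\Phi(x)\|_p$ yields (GGV4).

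For (GGV8), (GGV8) in each factor gives $\|\phi_i(x_i\oplus y_i)\|\le\|\phi_i(x_i)\|+\|\phi_i(y_i)\|$. Monotonicity of the $\ell^p$-norm on nonnegative vectors, followed by Minkowski's inequality, gives $\|\Phi(x\oplus y)\|_p\le\|\Phi(x)\|_p+\|\Phi(y)\|_p$.

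\textbf{Gyrometric, continuity, topology.} Since $\ominus$ is componentwise, we have $\rho(x,y)=\|(\rho_i(x_i,y_i))_i\|_p$, which is the stated formula. All $\ell^p$-norms on $\mathbb R^n$ are equivalent, so $\rho$ induces the product topology. Each $S_r$ is the product of the continuous maps $S_r^{(i)}$ and is therefore continuous. Thus $G$ is a standard-norm GGV, and Corollary~\ref{cor:contractible} gives that $G$ is contractible and simply connected.
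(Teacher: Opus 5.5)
Your proof is correct and follows the paper's argument. Like the paper, you verify the gyrogroup identities and (GGV0)--(GGV3), (GGV5), (GGV6) componentwise, use the identity $\phi_i(r\otimes_i x_i)=r\phi_i(x_i)$ for $r\ge 0$ to get (GGV4) and (GGV7), apply Minkowski's inequality for (GGV8), use a single nontrivial scalar ray for the value line, and identify the $\rho$-topology with the product topology for continuity. Your explicit treatment of coordinates with $x_i=e_i$ and your direct check of fixed-scalar continuity, instead of the joint continuity the paper cites, are only minor refinements of the same route.
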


\begin{proof}
The gyrogroup identities and (GGV1)--(GGV3), (GGV5), and (GGV6) hold
componentwise.  In a standard-norm GGV, (GGV4) and (GGV7) imply
\[
 \phi_i(r\otimes_i x_i)=r\phi_i(x_i)\qquad(r\ge0).
\]
Hence (GGV4) and (GGV7) hold for $\Phi$.  For (GGV0), use the corresponding
identity in each component.  For (GGV8), the component inequalities give
\[
 \|\phi_i(x_i\oplus_i y_i)\|
 \le\|\phi_i(x_i)\|+\|\phi_i(y_i)\|,
\]
and Minkowski's inequality (or the maximum inequality for $p=\infty$) gives
\[
 \|\Phi(x\oplus y)\|_p\le\|\Phi(x)\|_p+\|\Phi(y)\|_p.
\]
The norm-value set is all of $\mathbb R$ because one may vary a single
nontrivial component along a scalar ray.  The formula for the gyrometric is
immediate.  Joint continuity of scalar multiplication follows from the finite
product topology, which is the topology induced by the displayed metric.
\end{proof}

\begin{proposition}[Countable $\ell^p$-products]
\label{prop:countable-products}
Let $(G_j,\oplus_j,\otimes_j,\phi_j)$, $j\ge1$, be nontrivial standard-norm
GGVs and let $1\le p<\infty$.  Define
\[
 \bigoplus_{j=1}^{\infty,p}G_j
 =\left\{x=(x_j):
   \sum_{j=1}^\infty\|\phi_j(x_j)\|^p<\infty\right\}
\]
and use componentwise gyroaddition and scalar multiplication.  With
\[
 V=\bigoplus_{j=1}^{\infty,p}V_j,
 \qquad
 \Phi(x)=(\phi_j(x_j))_{j\ge1},
\]
this is a standard-norm GGV whose gyrometric is
\[
 \rho_p(x,y)=
 \left(\sum_{j=1}^\infty\rho_j(x_j,y_j)^p\right)^{1/p}.
\]
The countable $\ell^p$-product is contractible and simply connected.
\end{proposition}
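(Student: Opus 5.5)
We will follow the proof of Proposition~\ref{prop:finite-products}, adding the checks needed for infinitely many coordinates. The key quantity is $\|\phi_j(x_j)\|=\rho_j(e_j,x_j)$.

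First we check that $G:=\bigoplus_{j}^{\infty,p}G_j$ is closed under the operations. By (GGV8) with real value-line structure, $\|\phi_j(x_j\oplus_j y_j)\|\le\|\phi_j(x_j)\|+\|\phi_j(y_j)\|$. Minkowski's inequality in $\ell^p$ therefore shows that $x\oplus y\in G$ whenever $x,y\in G$. By \cite[Proposition~2.1(v)]{AHrevisited}, $\ominus x_j=(-1)\otimes_j x_j$, so (GGV7) gives $\|\phi_j(\ominus x_j)\|=\|\phi_j(x_j)\|$ and $G$ is closed under $\ominus$. Again by (GGV7), $\|\phi_j(r\otimes_j x_j)\|=|r|\|\phi_j(x_j)\|$, so $G$ is closed under $\otimes$. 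Each $\gyr[u,v]$ acts componentwise, and (GGV0) preserves each $\|\phi_j\|$, so $G$ is invariant under gyrations. Hence $G$ is a sub-gyrogroup of the full product. The gyrocommutative gyrogroup axioms and (GGV1)--(GGV3), (GGV5), (GGV6) hold componentwise.

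$\Phi$ maps $G$ into the $\ell^p$-sum $V$ by the definition of $G$, and it is injective because each $\phi_j$ is. As in the finite case, (GGV4) and (GGV7) with real scalars give $\phi_j(r\otimes_j x_j)=r\phi_j(x_j)$ for $r\ge0$. Together with (GGV7) for $r<0$, this yields (GGV4) and (GGV7) for $\Phi$ with the ordinary scalar operation. (GGV0) holds coordinatewise. (GGV8) follows from the componentwise inequality and Minkowski's inequality. The norm-value set is all of $\mathbb R$: put a scalar ray $s\otimes_1 a_1$ with $a_1\ne e_1$ in the first coordinate and $e_j$ elsewhere, which gives norm $|s|\,\|\phi_1(a_1)\|$. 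So $G$ has real value-line structure. The gyrometric formula follows because $x\ominus y$ is computed componentwise and $\|\phi_j(x_j\ominus_j y_j)\|=\rho_j(x_j,y_j)$.

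The main obstacle is the fixed-scalar continuity of $S_r$, since the product topology no longer agrees with the metric topology. Fix $r$, let $x^{(n)}\to x$ in $\rho_p$, and let $\varepsilon>0$.

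1. Choose $N$ with $\sum_{j>N}\|\phi_j(x_j)\|^p<\varepsilon$.
2. For large $n$, $\sum_{j>N}\rho_j(x_j^{(n)},x_j)^p<\varepsilon$. The triangle inequality then bounds the tail $\sum_{j>N}\|\phi_j(x^{(n)}_j)\|^p$ by a constant times $\varepsilon$.
3. For the tails we use $\rho_j(r\otimes u,r\otimes v)\le\rho_j(r\otimes u,e)+\rho_j(e,r\otimes v)=|r|(\|\phi_j(u)\|+\|\phi_j(v)\|)$. By step 2, the tail of $\rho_p(r\otimes x^{(n)},r\otimes x)^p$ is then at most a constant times $|r|^p\varepsilon$.
4. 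For the finitely many coordinates $j\le N$, continuity of each $S_r$ on $G_j$ makes the head sum tend to $0$.

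Thus $S_r$ is continuous, and $G$ is a standard-norm GGV. Contractibility and simple connectedness then follow from Corollary~\ref{cor:contractible}.
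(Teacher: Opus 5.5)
Your proposal is correct and follows the paper's proof essentially step by step: closure via (GGV8), (GGV7), (GGV0) and Minkowski's inequality, componentwise verification of the axioms, one nontrivial coordinate to get the full value line, and a finite-head/$\ell^p$-tail split for continuity. The only difference is that you verify just the fixed-scalar continuity of $S_r$ required by Definition~\ref{def:standard-norm} and then invoke Corollary~\ref{cor:contractible}, whereas the paper runs the same head/tail estimate with $r_n\to r$ and proves joint continuity directly.
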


\begin{proof}
The componentwise gyrogroup identities are immediate.  Closure under
 gyroaddition follows from (GGV8) and Minkowski's inequality:
\[
 \left(\sum_j\|\phi_j(x_j\oplus_jy_j)\|^p\right)^{1/p}
 \le
 \left(\sum_j(\|\phi_j(x_j)\|+\|\phi_j(y_j)\|)^p\right)^{1/p}<\infty.
\]
Scalar multiplication and gyrations preserve the defining $\ell^p$ condition
by (GGV7) and (GGV0).  The verification of (GGV0)--(GGV8) is then the same
componentwise argument as in Proposition~\ref{prop:finite-products}, with
Minkowski's inequality in place of its finite version.  Varying one nontrivial
coordinate along a scalar ray shows that the norm-value set is all of
$\mathbb R$.  The displayed formula for $\rho_p$ follows directly from the
definition.

For continuity, suppose $r_n\to r$ and $x^{(n)}\to x$ in $\rho_p$.  On every
fixed finite set of coordinates, joint continuity follows from the
corresponding property of the factors.  The remaining tail is uniformly small:
for bounded $|r_n|$,
\[
 \rho_j(r_n\otimes_jx_j^{(n)},r\otimes_jx_j)
 \le |r_n|\rho_j(e_j,x_j^{(n)})+|r|\rho_j(e_j,x_j),
\]
and
$\rho_j(e_j,x_j^{(n)})\le
 \rho_j(x_j^{(n)},x_j)+\rho_j(e_j,x_j)$.
The $\ell^p$ tails of the two terms on the right can therefore be made
uniformly small for all sufficiently large $n$.  Combining the finite-head
and tail estimates proves joint continuity.
\end{proof}

\begin{proposition}[Bounded mapping spaces]\label{prop:bounded-maps}
Let $(G,\oplus,\otimes,\phi)$ be a standard-norm GGV with gyrometric $\rho$,
and let $S$ be a nonempty set.  Put
\[
 \mathcal B(S,G)
 =\{f:S\to G:\sup_{s\in S}\rho(e,f(s))<\infty\}.
\]
Use pointwise gyroaddition and scalar multiplication and define
\[
 \Phi(f)(s)=\phi(f(s))
\]
as a map into the normed space $\ell^\infty(S,V)$.  Then
$\mathcal B(S,G)$ is a GGV with real value-line structure and
\begin{equation}\label{eq:sup-gyrometric}
 \rho_\infty(f,g)=\sup_{s\in S}\rho(f(s),g(s)).
\end{equation}
\end{proposition}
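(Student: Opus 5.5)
We verify the GGV axioms pointwise, as in Proposition~\ref{prop:finite-products}, with the supremum replacing Minkowski's inequality. The real work is to show that $\mathcal B(S,G)$ is closed under the operations and that $\Phi$ takes values in $\ell^\infty(S,V)$.

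\emph{Membership in $\ell^\infty(S,V)$.} By Lemma~\ref{lem:real-value-line-rays}, $\phi(e)=0$, so
\[
 \|\phi(f(s))\|=\|\phi(f(s)\ominus e)\|=\rho(f(s),e).
\]
Hence $\sup_s\|\Phi(f)(s)\|=\sup_s\rho(e,f(s))<\infty$, and $\Phi$ maps into $\ell^\infty(S,V)$. Injectivity of $\Phi$ follows from injectivity of $\phi$.

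\emph{Closure.} For gyroaddition, (GGV8) with the real value-line gives $\|\phi(f(s)\oplus g(s))\|\le\|\phi(f(s))\|+\|\phi(g(s))\|$, which is uniformly bounded in $s$. For scalars, (GGV7) gives $\|\phi(r\otimes f(s))\|=|r|\,\|\phi(f(s))\|$. For the inverse, $\ominus f(s)=(-1)\otimes f(s)$, and this is handled by the scalar case.

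\emph{Gyrations.} This step needs care. We must check that the gyration of the pointwise gyrogroup is the pointwise gyration, so that $\mathcal B(S,G)$ is a gyrocommutative gyrogroup. The pointwise operation on all of $G^S$ is a gyrocommutative gyrogroup with $\gyr[u,v]$ acting pointwise, because the defining identity $\gyr[u,v]w=\ominus(u\oplus v)\oplus(u\oplus(v\oplus w))$ is pointwise. By (GGV0), $\|\phi(\gyr[u(s),v(s)]w(s))\|=\|\phi(w(s))\|$, so $\gyr[u,v]$ preserves $\mathcal B(S,G)$. Thus $\mathcal B(S,G)$ is a subgyrogroup: it contains $e$, it is closed under $\oplus$ and $\ominus$, and it is invariant under gyrations. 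The left loop, left gyroassociative and gyrocommutative laws are then inherited from $G^S$. For the same reason, (GGV0) for $\Phi$ holds after taking the supremum of the pointwise equalities.

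\emph{The remaining axioms.} (GGV1)--(GGV3), (GGV5) and (GGV6) hold pointwise. For (GGV4) and (GGV7), use the identity $\phi(r\otimes x)=r\phi(x)$ for $r\ge0$, noted in the proof of Proposition~\ref{prop:finite-products}. It gives $\Phi(|r|\otimes f)=|r|\Phi(f)$ and $\|\Phi(r\otimes f)\|=|r|\,\|\Phi(f)\|$, which yield (GGV4) and (GGV7) with ordinary scalar multiplication. (GGV8) follows from
\[
 \sup_s\|\phi(f(s)\oplus g(s))\|\le\sup_s\|\phi(f(s))\|+\sup_s\|\phi(g(s))\|.
\]
For the norm-value set, $G$ is nontrivial: if $G$ consisted of $e$ alone, its norm-value set would not be one-dimensional. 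Take a constant map $f\equiv a$ with $a\ne e$ and vary $r$ along the ray $r\otimes f$. This shows that the norm-value set $\{\pm\|\Phi(f)\|\}$ is all of $\mathbb R$ with the usual operations. Hence $\mathcal B(S,G)$ has real value-line structure.

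\emph{The gyrometric.} Formula \eqref{eq:sup-gyrometric} is immediate:
\[
 \|\Phi(f\ominus g)\|=\sup_s\|\phi(f(s)\ominus g(s))\|=\sup_s\rho(f(s),g(s)).
\]

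We expect the gyration bookkeeping in the third step to be the main obstacle. It is routine but essential, because the closure of $\mathcal B(S,G)$ under gyrations relies on (GGV0) rather than on any linear structure. We do not claim fixed-scalar continuity, consistent with Remark~\ref{rem:concrete-continuity}: uniform control of $S_r$ over $S$ would need uniform continuity, which is not available in general.
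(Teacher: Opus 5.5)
Your proposal is correct and follows essentially the same route as the paper. Both arguments verify the axioms pointwise and get closure of $\mathcal B(S,G)$ under gyroaddition, scalars and gyrations from (GGV8), (GGV7) and (GGV0). Both use the positive radial identity $\phi(r\otimes x)=r\phi(x)$ for (GGV4), take suprema for (GGV0), (GGV7) and (GGV8), and run constant functions along scalar rays to obtain the real norm-value line; your explicit treatment of pointwise gyrations and of the nontriviality of $G$ only spells out what the paper leaves implicit.
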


\begin{proof}
Since $\rho(e,a)=\|\phi(a)\|$, the map $\Phi$ takes values in
$\ell^\infty(S,V)$ and is injective.  Pointwise gyrogroup identities give a
gyrocommutative gyrogroup.  The class $\mathcal B(S,G)$ is closed under
pointwise gyroaddition because
\[
 \|\phi(f(s)\oplus g(s))\|
 \le\|\phi(f(s))\|+\|\phi(g(s))\|,
\]
and it is closed under scalar multiplication and gyrations by (GGV7) and
(GGV0).  All remaining GGV identities hold pointwise.  Taking suprema gives
(GGV0), (GGV7), and (GGV8) for $\Phi$; (GGV4) follows from the positive radial
identity $\phi(r\otimes a)=r\phi(a)$ for $r\ge0$.  Constant functions and
scalar rays show that the norm-value set is the whole real line.  Finally,
\eqref{eq:sup-gyrometric} is immediate from the definition of $\Phi$.
\end{proof}

\begin{proposition}[Continuous function spaces]\label{prop:CKG}
Let $K$ be a nonempty compact Hausdorff space and let
$(G,\oplus,\otimes,\phi)$ be a standard-norm GGV.  Assume in
addition that gyroaddition
\[
 G\times G\longrightarrow G,
 \qquad (x,y)\longmapsto x\oplus y,
\]
and the coordinate map $\phi:G\to V$ are continuous for the gyrometric
topology.  Then the space $C(K,G)$ of continuous maps $K\to G$, with pointwise
operations, is a standard-norm GGV.  Its coordinate map is
\[
 \Phi:C(K,G)\longrightarrow C(K,V),
 \qquad \Phi(f)=\phi\circ f,
\]
and its gyrometric is the uniform metric
\[
 \rho_\infty(f,g)=\sup_{s\in K}\rho(f(s),g(s)).
\]
Thus this function-space example is a standard-norm GGV.
\end{proposition}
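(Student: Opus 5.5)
The plan is to realize $C(K,G)$ as a subset of the bounded mapping space $\mathcal B(K,G)$ of Proposition~\ref{prop:bounded-maps}, check that it is closed under all the relevant operations, and then add the fixed-scalar continuity required by Definition~\ref{def:standard-norm}.

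First, every $f\in C(K,G)$ is bounded: $s\mapsto\rho(e,f(s))$ is continuous and $K$ is compact. So $C(K,G)\subset\mathcal B(K,G)$. Next I check closure of $C(K,G)$ under each operation.
\begin{enumerate}[label=(\roman*)]
\item \emph{Gyroaddition.} $s\mapsto f(s)\oplus g(s)$ is continuous because $\oplus$ is jointly continuous by hypothesis.
\item \emph{Scalar multiplication.} $s\mapsto r\otimes f(s)$ is continuous because each $S_r$ is continuous.
\item \emph{Inverse.} $\ominus f=(-1)\otimes f$ by \cite[Proposition~2.1(v)]{AHrevisited}, so this is a special case of (ii).
\item \emph{Gyrations.} Pointwise, $\gyr[f,g]h(s)=\ominus(f(s)\oplus g(s))\oplus(f(s)\oplus(g(s)\oplus h(s)))$ by the standard gyrator identity. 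Every piece is continuous in $s$ by (i) and (iii), so $\gyr[f,g]h$ is continuous.
\end{enumerate}
Hence $C(K,G)$ is a sub-gyrogroup of $\mathcal B(K,G)$ that is invariant under scalar multiplication and gyrations. All the GGV identities (GGV0)--(GGV8) are inherited from Proposition~\ref{prop:bounded-maps}. The norm-value set is still all of $\mathbb R$, since constant maps $s\mapsto r\otimes a$ with $a\ne e$ lie in $C(K,G)$.

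For the coordinate map, continuity of $\phi$ gives $\Phi(f)=\phi\circ f\in C(K,V)$, and $C(K,V)$ carries the sup norm. This norm is the restriction of the $\ell^\infty(K,V)$ norm, so the gyrometric is the uniform metric $\rho_\infty$ restricted to $C(K,G)$. Injectivity of $\Phi$ follows from injectivity of $\phi$. This establishes the real value-line structure.

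The main obstacle is fixed-scalar continuity of $S_r$ in $\rho_\infty$, which requires \emph{uniform} rather than pointwise control. I would argue by contradiction. Suppose $f_n\to f$ uniformly, but there exist $\varepsilon>0$ and points $s_n\in K$ with $\rho(r\otimes f_n(s_n),r\otimes f(s_n))\ge\varepsilon$.
\begin{enumerate}[label=(\alph*)]
\item The set $f(K)$ is compact. Passing to a subnet (since $K$ need not be sequentially compact), let $s_\alpha\to s_0$.
\item Then $f(s_\alpha)\to f(s_0)$ by continuity of $f$. Also $\rho(f_\alpha(s_\alpha),f(s_\alpha))\le\rho_\infty(f_\alpha,f)\to0$, so $f_\alpha(s_\alpha)\to f(s_0)$ as well.
\item Continuity of $S_r$ at $f(s_0)$ sends both nets to $r\otimes f(s_0)$. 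This forces $\rho(r\otimes f_\alpha(s_\alpha),r\otimes f(s_\alpha))\to0$, contradicting the choice of $\varepsilon$.
\end{enumerate}
An equivalent, more direct route is that $S_r$, being continuous, is uniformly continuous on a neighborhood of the compact set $f(K)$, in the sense relevant to a compact set inside a metric space.

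With $S_r$ continuous on $C(K,G)$, the structure is standard-norm. Contractibility then follows from Corollary~\ref{cor:contractible}, which completes the proof.
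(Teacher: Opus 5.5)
Your proof is correct and follows essentially the paper's route: embed $C(K,G)$ in $\mathcal B(K,G)$, check closure under the pointwise operations (inversion via $(-1)\otimes x$, gyrations via the gyrator identity), inherit the axioms, and use compactness to get uniform control of scalar multiplication. The only difference is that you verify exactly the fixed-scalar continuity of $S_r$ required by Definition~\ref{def:standard-norm}, by a net/contradiction argument, whereas the paper proves joint continuity directly from Proposition~\ref{prop:separate-scalar-continuity} by a finite cover of $\{r\}\times f(K)$. One small correction to your closing aside: in infinite-dimensional metric spaces a continuous map need not be uniformly continuous on any neighborhood of a compact set. The valid statement is the relative one (for each $\varepsilon>0$ there is $\delta>0$ with $\rho(r\otimes x,r\otimes y)<\varepsilon$ whenever $x\in f(K)$ and $\rho(x,y)<\delta$), which is exactly what your contradiction argument establishes.
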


\begin{proof}
Compactness of $K$ implies that every continuous $f:K\to G$ is bounded for the
gyrometric, so $C(K,G)\subset\mathcal B(K,G)$.  Continuity of gyroaddition
shows that pointwise sums of continuous maps are continuous.  Since
$\ominus x=(-1)\otimes x$, inversion is continuous, and the standard gyrogroup
formula
\[
 \gyr[u,v]w
 =\ominus(u\oplus v)\oplus\bigl(u\oplus(v\oplus w)\bigr)
\]
shows that pointwise gyrations preserve continuity.  The assumptions on
$\phi$ give $\Phi(f)\in C(K,V)$.  Thus $C(K,G)$ is a sub-GGV of the bounded
mapping-space GGV in Proposition~\ref{prop:bounded-maps}, and all GGV axioms
and the displayed uniform metric follow by restriction.  Constant functions,
using $K\ne\varnothing$, show that its norm-value set is $\mathbb R$.

It remains only to check continuity of scalar multiplication for the uniform
metric.  Let $r_n\to r$ and $f_n\to f$ uniformly.  The image $f(K)$ is compact.
Joint continuity of $(s,x)\mapsto s\otimes x$, together with a finite-cover
argument on the compact set $\{r\}\times f(K)$, implies that for every
$\varepsilon>0$ there are $\delta>0$ and a neighborhood $I$ of $r$ such that
\[
 |s-r|<\delta,\quad \rho(x,f(k))<\delta
 \quad\Longrightarrow\quad
 \rho(s\otimes x,r\otimes f(k))<\varepsilon
\]
uniformly in $k\in K$.  For large $n$, $|r_n-r|<\delta$ and
$\sup_k\rho(f_n(k),f(k))<\delta$, which proves
$\rho_\infty(r_n\otimes f_n,r\otimes f)\to0$.
\end{proof}

\begin{remark}[Hybrid and function-space examples]\label{rem:hybrid-examples}
The preceding permanence results produce standard-norm GGVs which do not
belong to any single classical family.  After
Theorems~\ref{thm:JB-GGV} and \ref{thm:projective-JB-GGV} below, one may form,
for example, $\ell^p$-products of a M\"obius ball with an affine JB-cone and a
projective JB-cone.  One may also form $C(K,G)$ for the classical hyperbolic
models and for the affine or projective JB examples, since their operations
and logarithmic coordinates are continuous in the relevant metric topology.
Taking non-Hilbertian $\ell^p$-products, $p\ne2$, gives particularly explicit
standard-norm GGVs outside the real inner product gyrovector-space framework.
Whenever the factors in these displayed constructions are continuous, the
resulting examples are continuous and hence contractible and simply connected.
\end{remark}

\section{Preliminaries on JB-algebras}

Throughout the JB-algebra sections, unital JB-algebras are understood to be
nonzero.  Let $A$ be such an algebra with unit $e$.  If $z$ is central and
$x\in A$, we shall write $zx$ for the Jordan product $z\circ x$.  For
$a\in A$, let
\[
 L_a(x)=a\circ x
\]
and define the quadratic representation
\[
 U_a=2L_a^2-L_{a^2}.
\]
We shall repeatedly use the fundamental formula
\begin{equation}\label{eq:fundamental}
 U_{U_ab}=U_aU_bU_a.
\end{equation}
If $a$ is invertible, then
\begin{equation}\label{eq:Uinverse}
 U_a^{-1}=U_{a^{-1}}.
\end{equation}
If $a$ and $b$ are invertible, then
\begin{equation}\label{eq:Uab-inverse}
 (U_ab)^{-1}=U_{a^{-1}}b^{-1}.
\end{equation}
The JB-subalgebra generated by an element $a$ and $e$ admits the usual
continuous functional calculus.  In particular, for $a\in A_+^\circ$, the
elements $a^r$, $r\in\mathbb R$, and $\log a$ are well defined.

\begin{lemma}\label{lem:power-U}
Let $a\in A_+^\circ$.  Then
\[
 U_{a^\alpha}U_{a^\beta}=U_{a^{\alpha+\beta}}
\]
for every $\alpha,\beta\in\mathbb R$.
\end{lemma}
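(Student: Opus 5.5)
The plan is to reduce everything to the associative subalgebra generated by $a$ and $e$, and to use the operator-commutativity available there. By the Shirshov--Cohn theorem, or more simply by the standard fact that in a JB-algebra the closed subalgebra $C(a,e)$ generated by a single element $a$ and the unit is associative and isometrically isomorphic to some $C(X)$, all powers $a^\alpha$ with $a\in A_+^\circ$ and $\alpha\in\mathbb R$ lie in $C(a,e)$. They are given by continuous functional calculus on the spectrum, which is contained in $(0,\infty)$.

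The key input is that elements of $C(a,e)$ \emph{operator commute}: for $x,y\in C(a,e)$ one has $L_xL_y=L_yL_x$ on all of $A$. This is the standard consequence of power-associativity and the Jordan identity. The identity $[L_x,L_{x^2}]=0$ linearizes to show that $L_p,L_q$ commute for all polynomials $p,q$ in $x$. Continuity of $x\mapsto L_x$ in operator norm, combined with density of polynomials in $C(a,e)$ (Weierstrass approximation on the spectrum), then extends this to the whole closed subalgebra.

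Given operator commutativity, the computation is direct. Put $x=a^\alpha$ and $y=a^\beta$, so that $x\circ y=a^{\alpha+\beta}$, $x^2=a^{2\alpha}$, and so on, all via functional calculus in $C(a,e)$. Then
\[
U_xU_y=(2L_x^2-L_{x^2})(2L_y^2-L_{y^2}),
\]
and the target is $U_{xy}=2L_{xy}^2-L_{x^2y^2}$. Comparing these requires identities such as $L_{xy}$ versus $L_xL_y$, which do not hold in general, even for commuting multiplications. So rather than expanding directly, I would use the Macdonald principle: any identity in the three variables involving $U$ operators and degree at most one in a third variable holds in all Jordan algebras if it holds in special ones. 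For commuting $x,y$ in an associative subalgebra, the special-algebra computation is $U_xU_y z=x(yzy)x=(xy)z(yx)=U_{xy}z$. Since $x$ and $y$ are polynomials in a single element, Macdonald's theorem applies to the two-variable identity $U_{p(a)}U_{q(a)}=U_{p(a)q(a)}$, which is really an identity in the one generator $a$ and $z$. This gives the result for polynomials $p,q$. Continuity of $x\mapsto U_x$ (since $U$ is quadratic in $L$) and density of polynomials then pass to the limit and yield $U_{a^\alpha}U_{a^\beta}=U_{a^{\alpha+\beta}}$. For negative exponents, I would approximate the functions $t^\alpha$ uniformly on the spectrum, which is a compact subset of $(0,\infty)$.

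The main obstacle is justifying the passage from the polynomial identity to arbitrary real powers: both $x\mapsto U_x$ and multiplication in $C(a,e)$ must be continuous, and polynomials must approximate $t^\alpha,t^\beta,t^{\alpha+\beta}$ simultaneously. Taking $p_n\to t^\alpha$ and $q_n\to t^\beta$ uniformly gives $p_nq_n\to t^{\alpha+\beta}$ uniformly on the spectrum, so the limits match. As an alternative to Macdonald, I could use the known identity $U_{x^n}=U_x^n$ and its consequence $U_{x^m}U_{x^n}=U_{x^{m+n}}$. This holds in any power-associative Jordan algebra and is a standard textbook fact (Hanche-Olsen--Størmer). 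Applying it inside $C(a,e)$ to $x=a^{1/N}$ gives the result for rational exponents, and density then extends it to all reals.
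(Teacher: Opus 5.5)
Your argument is correct. It differs from the paper's mainly in how you pass from polynomials to arbitrary real exponents.

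\textbf{The polynomial identity.} Both proofs start from $U_{f(a)}U_{g(a)}=U_{(fg)(a)}$ for polynomials $f,g$. The paper cites McCrimmon's operator power-associativity; you rederive it from Macdonald's principle, which is essentially the same content.

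\textbf{The paper's extension.} The paper then climbs an exponent ladder:
\begin{enumerate}[label=(\roman*)]
\item nonnegative integers;
\item all integers, via $U_a^{-1}=U_{a^{-1}}$;
\item rationals, via $c=a^{1/q}$;
\item reals, by approximating $\alpha,\beta$ by rationals and using norm continuity of $\alpha\mapsto a^\alpha$ and of $x\mapsto U_x$.
\end{enumerate}

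\textbf{Your extension.} You approximate the functions $t^\alpha$ and $t^\beta$ directly by polynomials $p_n,q_n$, uniformly on the compact set $\sigma(a)\subset(0,\infty)$. Then $p_nq_n\to t^{\alpha+\beta}$ uniformly, and you pass to the limit using the isometric functional calculus $C(\sigma(a))\cong C(a,e)$ together with continuity of $x\mapsto U_x$.

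\textbf{Comparison.} Your route handles negative exponents in the same stroke and needs neither the inverse formula nor the root trick. It also proves the stronger statement $U_{f(a)}U_{g(a)}=U_{(fg)(a)}$ for all $f,g\in C(\sigma(a))$. The paper's route relies only on continuity in the exponent, but pays for this with three intermediate steps.

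\textbf{Small remarks.}
\begin{itemize}
\item The operator-commutativity paragraph is never used, since you rightly abandon the direct expansion, so it can be dropped.
\item Shirshov--Cohn is not what makes $C(a,e)$ associative. It could, however, replace Macdonald's principle: apply it to the unital subalgebra generated by $a$ and $z$.
\item Your alternative route via $x=a^{1/N}$ yields only nonnegative rational exponents as written. It would need the paper's step $U_{a^{-m}}=U_{a^m}^{-1}$, which your main route makes unnecessary.
\end{itemize}
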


\begin{proof}
By the operator power-associativity rule
\cite[Theorem~5.2.2(2)]{McCrimmon},
\[
 U_{f(a)}U_{g(a)}=U_{(fg)(a)}
\]
for polynomials $f,g$.  Thus the assertion first holds for nonnegative
integers.  Using \eqref{eq:Uinverse}, it extends to arbitrary integers.
For
\[
 \alpha=\frac mq,\qquad \beta=\frac nq,
\]
where $m,n\in\mathbb Z$ and $q$ is positive, put $c=a^{1/q}$.  Then
\[
 U_{a^\alpha}U_{a^\beta}=U_{c^m}U_{c^n}=U_{c^{m+n}}
 =U_{a^{\alpha+\beta}}.
\]
For arbitrary real $\alpha,\beta$, approximate them by rational sequences.
Continuous functional calculus gives convergence of the corresponding powers,
and $x\mapsto U_x=2L_x^2-L_{x^2}$ is norm continuous.  Passing to the limit
proves the result.
\end{proof}

\section{A gyrocommutative gyrogroup structure}

Let
\[
 \Om=A_+^\circ.
\]
For $a,b\in\Om$, define
\begin{equation}\label{eq:oplus1}
 a\oplus b=U_{a^{1/2}}b.
\end{equation}
Let $\mathcal L_a(b)=a\oplus b$.  Then
\begin{equation}\label{eq:left-translations}
 \mathcal L_a=U_{a^{1/2}},
 \qquad
 \mathcal L_a^{-1}=U_{a^{-1/2}}=\mathcal L_{a^{-1}}.
\end{equation}

\begin{lemma}\label{lem:Ka-b}
Let $a,b\in\Om$, put
\[
 d=a\oplus b=U_{a^{1/2}}b,
\]
and define
\[
 K_{a,b}=U_{d^{-1/2}}U_{a^{1/2}}U_{b^{1/2}}.
\]
Then $K_{a,b}$ is a Jordan automorphism of $A$.
\end{lemma}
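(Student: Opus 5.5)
The plan is to show that $K:=K_{a,b}$ is an invertible bounded linear map of $A$ that fixes the unit and is a "structure-group" element. Then we use the standard JB-algebra fact that a unital element of the structure group is a Jordan automorphism. Concretely, a bijective linear map $W$ with $U_{Wx}=WU_xW^{-1}$ for all $x$ is an automorphism once $We=e$.

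First, unitality. We have
\[
 Ke=U_{d^{-1/2}}U_{a^{1/2}}U_{b^{1/2}}e=U_{d^{-1/2}}U_{a^{1/2}}b=U_{d^{-1/2}}d=e,
\]
where the last step is Lemma~\ref{lem:power-U} applied with $\alpha=\beta=-1/2$ to the element $d$.

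Second, the structure-group property. By the fundamental formula \eqref{eq:fundamental}, each $U_c$ with $c$ invertible satisfies
\[
 U_{U_cx}=U_cU_xU_c,
\]
so $U_c$ lies in the structure group, with adjoint-type partner $U_c^{-1}=U_{c^{-1}}$ by \eqref{eq:Uinverse}. The structure group is closed under composition: if $U_{Wx}=WU_xW^{\#}$, then the partners compose in reverse order. Hence
\[
 U_{Kx}=K\,U_x\,K^{\#},\qquad K^{\#}=U_{b^{1/2}}U_{a^{1/2}}U_{d^{-1/2}}.
\]
Evaluating at $x=e$ and using $Ke=e$ gives $\id=U_e=KK^{\#}$, so $K^{\#}=K^{-1}$. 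Therefore $U_{Kx}=KU_xK^{-1}$ for all $x$.

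Third, we deduce the automorphism property. From $U_{Kx}=KU_xK^{-1}$ and linearization, we get
\[
 U_{Kx,Ky}=KU_{x,y}K^{-1}.
\]
Since $L_x=\tfrac12U_{x,e}$ and $Ke=e$, it follows that
\[
 L_{Kx}=\tfrac12U_{Kx,Ke}=K\,\tfrac12U_{x,e}K^{-1}=KL_xK^{-1}.
\]
Applying both sides to $Ky$ yields $Kx\circ Ky=K(x\circ y)$. So $K$ is a Jordan automorphism. It is bounded since it is a composition of bounded $U$ operators, and invertible since each factor is invertible.

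The main obstacle is the bookkeeping in the second step. We must verify that the composed partner really equals $K^{-1}$, which is where unitality is essential, and justify the polarization $U_{x,y}=U_{x+y}-U_x-U_y$ together with $L_x=\tfrac12U_{x,e}$. These are standard Jordan identities that I would cite from McCrimmon rather than rederive. An alternative route, if preferred, is to apply the known theorem that positive unital invertible maps preserving $\Omega$ in the structure group are automorphisms. The direct polarization argument above avoids any positivity input, so I would use it.
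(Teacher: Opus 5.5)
Your proof is correct and has the same architecture as the paper's: first $Ke=e$, then $U_{Kx}=KU_xK^\sharp$ with $K^\sharp=U_{b^{1/2}}U_{a^{1/2}}U_{d^{-1/2}}$ from the fundamental formula, then $K^\sharp=K^{-1}$, then polarization. Your derivation of $K^\sharp=K^{-1}$ from $\id=U_{Ke}=KU_eK^\sharp$ is slicker than the paper's explicit check $K^\sharp K=U_{b^{1/2}}U_{b^{-1}}U_{b^{1/2}}=I$ via $d^{-1}=U_{a^{-1/2}}b^{-1}$, and polarizing $U$ through $L_x=\tfrac12U_{x,e}$ is an equally valid substitute for the paper's polarization of $(Kx)^2=K(x^2)$.

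There are two cosmetic slips. The identity $U_{d^{-1/2}}d=e$ comes from Lemma~\ref{lem:power-U} with $\alpha=-\tfrac12$, $\beta=\tfrac12$ applied to $e$ (or from plain one-variable functional calculus), not from $\alpha=\beta=-\tfrac12$. Also, in the convention $U_{Wx}=WU_xW^\sharp$ the partner of $U_c$ is $U_c$ itself rather than $U_{c^{-1}}$; your displayed $K^\sharp$ in fact already uses the correct partner.
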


\begin{proof}
Write $K=K_{a,b}$.  Each quadratic representation occurring in $K$ is
invertible, hence $K$ is invertible.  For
\[
 G=U_{x_1}\cdots U_{x_n},
 \qquad
 G^\sharp=U_{x_n}\cdots U_{x_1},
\]
repeated use of the fundamental formula gives
\begin{equation}\label{eq:UGz}
 U_{Gz}=G U_z G^\sharp.
\end{equation}
We have
\[
 K(e)=U_{d^{-1/2}}U_{a^{1/2}}b
      =U_{d^{-1/2}}d=e.
\]
Taking $b=e$ in the fundamental formula gives $U_{x^2}=U_x^2$.  Hence
$U_{d^{-1/2}}^2=U_{d^{-1}}$, and therefore
\[
 K^\sharp K
 =U_{b^{1/2}}U_{a^{1/2}}U_{d^{-1}}U_{a^{1/2}}U_{b^{1/2}}.
\]
By \eqref{eq:Uab-inverse},
$d^{-1}=U_{a^{-1/2}}b^{-1}$, so
$U_{a^{1/2}}d^{-1}=b^{-1}$.  The fundamental formula yields
\[
 U_{a^{1/2}}U_{d^{-1}}U_{a^{1/2}}=U_{b^{-1}}.
\]
Thus
\[
 K^\sharp K=U_{b^{1/2}}U_{b^{-1}}U_{b^{1/2}}=I.
\]
Hence $K^\sharp=K^{-1}$, and \eqref{eq:UGz} gives
\[
 U_{Kx}=K U_x K^{-1}.
\]
Since $K(e)=e$,
\[
 (Kx)^2=U_{Kx}e=KU_xe=K(x^2).
\]
Polarization now gives $K(x\circ y)=Kx\circ Ky$, so $K$ is a Jordan
automorphism.
\end{proof}

\begin{proposition}\label{prop:gyrogroup-JB}
The set $\Om$, equipped with \eqref{eq:oplus1}, is a gyrocommutative
gyrogroup.  Its identity is $e$, its gyroinverse is $\ominus a=a^{-1}$, and
\begin{equation}\label{eq:gyr-JB-t1}
 \gyr[a,b]
 =U_{(a\oplus b)^{-1/2}}U_{a^{1/2}}U_{b^{1/2}}.
\end{equation}
\end{proposition}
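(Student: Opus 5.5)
We verify the gyrogroup axioms directly, using Lemma~\ref{lem:Ka-b} to produce the gyrations; the left loop property is the only genuinely nontrivial step.

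\textbf{Closure, identity, inverses.} First, $\Om$ is closed under $\oplus$: for invertible $x$, $U_x$ maps $\Om$ onto $\Om$, so $U_{a^{1/2}}b\in\Om$. Next, $e\oplus a=U_ea=a$, and $a\oplus e=U_{a^{1/2}}e=a$. For inverses, $a^{-1}\oplus a=U_{a^{-1/2}}a=e$ by functional calculus in the subalgebra generated by $a$. Left cancellation is immediate from \eqref{eq:left-translations}: $\mathcal L_a$ is a bijection of $\Om$ with inverse $\mathcal L_{a^{-1}}$.

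\textbf{Gyrations and gyroassociativity.} Define $\gyr[a,b]=K_{a,b}$, which is a Jordan automorphism by Lemma~\ref{lem:Ka-b}. A Jordan automorphism commutes with the functional calculus and with $U$: $K(x^r)=(Kx)^r$ and $U_{Kx}=KU_xK^{-1}$. Hence it preserves $\Om$ and is an automorphism of $(\Om,\oplus)$, since $K(x\oplus y)=U_{(Kx)^{1/2}}Ky=Kx\oplus Ky$. Left gyroassociativity is equivalent to the operator identity
\[
\mathcal L_a\mathcal L_b=\mathcal L_{a\oplus b}\gyr[a,b],
\]
that is, $U_{a^{1/2}}U_{b^{1/2}}=U_{d^{1/2}}K_{a,b}$ with $d=a\oplus b$. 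This holds by the very definition of $K_{a,b}$ together with $U_{d^{1/2}}U_{d^{-1/2}}=I$ (Lemma~\ref{lem:power-U}). In particular $\gyr[a,b]$ is uniquely determined by this relation, which yields \eqref{eq:gyr-JB-t1}.

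\textbf{Left loop property.} We must show $\gyr[a\oplus b,b]=\gyr[a,b]$. Both sides are Jordan automorphisms of the form $U_{x^{-1/2}}U_{y^{1/2}}U_{z^{1/2}}$. An automorphism $K$ with $K=U_{c}U_{w}\cdots$ is determined by its polar-type factorization, so the plan is to compare $K^{-1}$ applied to suitable elements. Concretely, set $d=a\oplus b$. One checks that $\gyr[d,b]$ corresponds to $U_{(d\oplus b)^{-1/2}}U_{d^{1/2}}U_{b^{1/2}}$. Rather than computing $d\oplus b$ explicitly, I would use the general gyrogroup criterion: it suffices to prove the left loop property in the equivalent form $\gyr[a,b]=\gyr[a\oplus b,b]$ via the identity $\mathcal L_{d}\mathcal L_b\cdots$. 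If this gets messy, the alternative is to verify the Bruck-loop identity $a\oplus(b\oplus(a\oplus c))=(a\oplus(b\oplus a))\oplus c$. In operator form this reads $U_{a^{1/2}}U_{b^{1/2}}U_{a^{1/2}}=\mathcal L_{U_{a^{1/2}}b\,\cdot}$, which follows immediately from the fundamental formula: $U_{a^{1/2}}U_{b^{1/2}}U_{a^{1/2}}=U_{U_{a^{1/2}}b^{1/2}}$. Moreover, $(U_{a^{1/2}}b^{1/2})^2=U_{a^{1/2}}U_{b^{1/2}}a=a\oplus(b\oplus a)$ (the square relation $U_{x}^2=U_{x^2}$ together with $U_xe=x^2$). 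Thus $(\Om,\oplus)$ is a Bruck loop with unique square roots. The standard equivalence between such loops (uniquely 2-divisible Bruck loops) and gyrocommutative gyrogroups (Kim--Lawson \cite{KimLawson}) then gives the gyrogroup axioms, including the left loop property and gyrocommutativity.

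\textbf{Gyrocommutativity.} Either invoke the Bruck-loop equivalence above, or check directly that $b\oplus a=K_{a,b}(a\oplus b)$. This amounts to $U_{d^{-1/2}}U_{a^{1/2}}U_{b^{1/2}}U_{a^{1/2}}b=U_{b^{1/2}}a$, which reduces via the fundamental formula to the same square-root identity.

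The main obstacle is the left loop property. I would handle it through the Bruck identity plus unique 2-divisibility, the key being the fundamental-formula identity for $U_{a^{1/2}}U_{b^{1/2}}U_{a^{1/2}}$ and the identification of $K_{a,b}$ with the gyration produced by the loop. This requires checking that the loop's gyration $\mathcal L_{a\oplus b}^{-1}\mathcal L_a\mathcal L_b$ equals $K_{a,b}$, which holds by definition.
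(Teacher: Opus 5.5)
Your treatment of the identity, inverses, formula \eqref{eq:gyr-JB-t1}, left gyroassociativity and the automorphism property of $K_{a,b}$ is correct and matches the paper. Your operator Bol identity $\mathcal L_a\mathcal L_b\mathcal L_a=\mathcal L_{a\oplus(b\oplus a)}$ is exactly the paper's key identity \eqref{eq:LaLbLa}; you should add that $x=U_{a^{1/2}}b^{1/2}$ is positive, so that $U_x=U_{(x^2)^{1/2}}$.

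\textbf{Left loop property.} This is the first gap. Your first attempt contains no argument. The fallback cites the Bruck-loop/gyrocommutative-gyrogroup correspondence without checking its hypotheses. The left Bol identity alone does not make $(\Om,\oplus)$ a Bruck loop. You would also need the automorphic inverse property $(a\oplus b)^{-1}=a^{-1}\oplus b^{-1}$, which holds by \eqref{eq:Uab-inverse} but is never invoked, and unique solvability of $x\oplus a=b$, which you do not address.

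None of this is needed, because you already have the ingredients. Put $d=a\oplus b$. Since $\mathcal L_{a^{-1}}=\mathcal L_a^{-1}$, we have $a^{-1}\oplus d=b$. Your Bol identity applied to the pair $d,a^{-1}$ then gives $\mathcal L_{d\oplus b}=\mathcal L_d\mathcal L_{a^{-1}}\mathcal L_d$, and hence
\[
 \gyr[d,b]=\mathcal L_{d\oplus b}^{-1}\mathcal L_d\mathcal L_b
 =\mathcal L_d^{-1}\mathcal L_a\mathcal L_d^{-1}\mathcal L_d\mathcal L_b
 =\mathcal L_d^{-1}\mathcal L_a\mathcal L_b=\gyr[a,b].
\]
This is how the paper proceeds.

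\textbf{Gyrocommutativity.} Your direct check is wrong. The law is $a\oplus b=\gyr[a,b](b\oplus a)$. The reversed identity you propose, $b\oplus a=K_{a,b}(a\oplus b)$, i.e.\ $U_{d^{-1/2}}U_{a^{1/2}}U_{b^{1/2}}d=U_{b^{1/2}}a$, fails in general.

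Take $A=M_2(\mathbb C)_{\sa}$, $a=\operatorname{diag}(4,1)$ and $b^{1/2}=\begin{pmatrix}1&1\\1&2\end{pmatrix}$. Then $d=a\oplus b=\begin{pmatrix}8&6\\6&5\end{pmatrix}$ and $b\oplus a=\begin{pmatrix}5&6\\6&8\end{pmatrix}$, whereas $K_{a,b}(d)=\frac1{17}\begin{pmatrix}181&78\\78&40\end{pmatrix}$. In the matrix case, $K_{a,b}x=uxu^*$ with the unitary $u=d^{-1/2}a^{1/2}b^{1/2}$, and $b\oplus a=u^*du$, not $udu^*$.

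The correct identity does follow from the fundamental formula, as in the paper. We have $U_{b^{1/2}}U_{b^{1/2}}=U_b$, and evaluating $U_d=U_{a^{1/2}}U_bU_{a^{1/2}}$ at $e$ gives $d^2=U_{a^{1/2}}U_ba$. Hence $K_{a,b}(b\oplus a)=U_{d^{-1/2}}d^2=d$. Your Bruck-loop fallback for this step also rests on the unverified automorphic inverse property, so as written gyrocommutativity is unproved as well.
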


\begin{proof}
Clearly $e\oplus a=a$ and $a^{-1}\oplus a=e$.  Put
$d=a\oplus b$ and define
\[
 \gyr[a,b]=\mathcal L_d^{-1}\mathcal L_a\mathcal L_b.
\]
Then
\[
 a\oplus(b\oplus c)
 =(a\oplus b)\oplus\gyr[a,b]c,
\]
so the left gyroassociative law holds.  By Lemma~\ref{lem:Ka-b}, every
gyration is a Jordan automorphism; it therefore preserves continuous
functional calculus and satisfies
\[
 K(x\oplus y)=K(x)\oplus K(y).
\]

We next prove
\begin{equation}\label{eq:LaLbLa}
 \mathcal L_a\mathcal L_b\mathcal L_a=\mathcal L_{a\oplus(b\oplus a)}.
\end{equation}
Put $x=U_{a^{1/2}}b^{1/2}$.  By the fundamental formula,
\[
 U_x=U_{a^{1/2}}U_{b^{1/2}}U_{a^{1/2}}=\mathcal L_a\mathcal L_b\mathcal L_a,
\]
while
\[
 x^2=U_xe=U_{a^{1/2}}U_{b^{1/2}}a=a\oplus(b\oplus a).
\]
Since $x>0$, \eqref{eq:LaLbLa} follows.

Again put $d=a\oplus b$.  Since $a^{-1}\oplus d=b$, applying
\eqref{eq:LaLbLa} to $d$ and $a^{-1}$ gives
\[
 \mathcal L_{d\oplus b}=\mathcal L_d\mathcal L_{a^{-1}}\mathcal L_d.
\]
Hence
\[
 \gyr[d,b]
 =\mathcal L_{d\oplus b}^{-1}\mathcal L_d\mathcal L_b
 =\mathcal L_d^{-1}\mathcal L_a\mathcal L_b
 =\gyr[a,b],
\]
which is the left loop property.

Finally,
\[
 \gyr[a,b](b\oplus a)
 =U_{d^{-1/2}}U_{a^{1/2}}U_{b^{1/2}}U_{b^{1/2}}a.
\]
The fundamental formula gives
\[
 U_d=U_{a^{1/2}}U_bU_{a^{1/2}},
\]
and applying this to $e$ yields
$d^2=U_{a^{1/2}}U_ba$.  Therefore
\[
 \gyr[a,b](b\oplus a)=U_{d^{-1/2}}d^2=d,
\]
where Lemma~\ref{lem:power-U} is used in the last step.  Thus
\[
 a\oplus b=\gyr[a,b](b\oplus a),
\]
and the gyrogroup is gyrocommutative.
\end{proof}

\section[The affine JB-cone as a standard-norm GGV]
{The affine JB-cone as a\texorpdfstring{\\}{ }standard-norm GGV}

Let $t>0$.  For $a,b\in\Om$ and $r\in\mathbb R$, define
\begin{equation}\label{eq:oplus-t}
 a\oplus_t b=\bigl(U_{a^{t/2}}b^t\bigr)^{1/t},
 \qquad
 r\otimes a=a^r,
\end{equation}
and put
\begin{equation}\label{eq:phi-log}
 \phi(a)=\log a.
\end{equation}
Consider
\[
 F_t:\Om\to\Om,
 \qquad F_t(a)=a^t.
\]
Then
\begin{equation}\label{eq:Ft-gyroiso}
 F_t(a\oplus_t b)=F_t(a)\oplus F_t(b).
\end{equation}
Thus $F_t$ is a gyrogroup isomorphism from $(\Om,\oplus_t)$ onto
$(\Om,\oplus)$.  If
\[
 d=U_{a^{t/2}}b^t=(a\oplus_t b)^t,
\]
then
\begin{equation}\label{eq:gyr-t}
 \gyr_t[a,b]=U_{d^{-1/2}}U_{a^{t/2}}U_{b^{t/2}},
\end{equation}
which is a Jordan automorphism by Lemma~\ref{lem:Ka-b}.

\begin{theorem}\label{thm:JB-GGV}
Let $A$ be a unital JB-algebra and $\Om=A_+^\circ$.  For every $t>0$,
$(\Om,\oplus_t,\otimes,\phi)$ is a generalized gyrovector space with real
value-line structure.
\end{theorem}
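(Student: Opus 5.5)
The plan is to verify the axioms of Definition~\ref{def:GGV} one at a time. Everything algebraic will be reduced either to the commutative subalgebra generated by a single element or to Jordan automorphisms. The only genuinely analytic point is (GGV8).

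\emph{Gyrogroup and easy axioms.} By \eqref{eq:Ft-gyroiso}, $F_t$ is a bijective gyrogroup isomorphism onto $(\Om,\oplus)$. Proposition~\ref{prop:gyrogroup-JB} then makes $(\Om,\oplus_t)$ a gyrocommutative gyrogroup with identity $e$ and $\ominus_t a=a^{-1}$. Its gyrations are given by \eqref{eq:gyr-t}, and these are Jordan automorphisms by Lemma~\ref{lem:Ka-b}. The map $\phi=\log$ is injective, since $\exp$ inverts it by functional calculus.

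\emph{(GGV0) and (GGV5).} A Jordan automorphism $K$ of a unital JB-algebra fixes $e$, preserves the spectrum of each element, and is therefore an isometry. It also commutes with continuous functional calculus. Hence $K\log a=\log Ka$ and $\|\log Ka\|=\|\log a\|$, which gives (GGV0), and $K(a^r)=(Ka)^r$, which gives (GGV5).

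\emph{(GGV1)--(GGV3), (GGV4), (GGV7).} (GGV1) and (GGV3) are $a^1=a$ and $(a^{r_2})^{r_1}=a^{r_1r_2}$, both in the associative subalgebra generated by $a$. For (GGV2) I will use the identity $U_{a^\alpha}a^\beta=a^{2\alpha+\beta}$, which holds in that same subalgebra. It gives
\[
 a^{r_1}\oplus_t a^{r_2}=\bigl(U_{a^{tr_1/2}}a^{tr_2}\bigr)^{1/t}=a^{r_1+r_2}.
\]
Since $\log(a^r)=r\log a$, we get $\phi(|r|\otimes a)=|r|\log a$ and $\|\phi(r\otimes a)\|=|r|\,\|\log a\|$. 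This yields (GGV4) and (GGV7) with the ordinary real operations.

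\emph{(GGV6).} Take $a^{r_1},a^{r_2}$. Here $d=a^{t(r_1+r_2)}$, so \eqref{eq:gyr-t} reads
\[
 U_{a^{-t(r_1+r_2)/2}}\,U_{a^{tr_1/2}}\,U_{a^{tr_2/2}},
\]
and this equals $U_{a^0}=I$ by Lemma~\ref{lem:power-U}.

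\emph{Norm-value set.} Using $a=\exp(se)=e^s e$ we get $\|\log a\|=|s|$. Hence $\{\pm\|\phi(a)\|\}=\mathbb R$ with the usual operations, which is the real value-line structure.

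\emph{(GGV8), the main obstacle.} We must show
\[
 \bigl\|\log\bigl(U_{a^{t/2}}b^t\bigr)^{1/t}\bigr\|\le\|\log a\|+\|\log b\|.
\]
No commutativity is available here, so I will avoid operator-monotonicity issues by comparing only with scalar multiples of $e$. Let $\alpha=\|\log a\|$ and $\beta=\|\log b\|$. Then
\[
 e^{-\alpha}e\le a\le e^{\alpha}e,\qquad e^{-t\beta}e\le b^t\le e^{t\beta}e .
\]
Positivity of $U_{a^{t/2}}$ gives
\[
 e^{-t\beta}a^t\le U_{a^{t/2}}b^t\le e^{t\beta}a^t,
\]
and $a^t$ is itself squeezed between $e^{\mp t\alpha}e$. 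Therefore the spectrum of $c=U_{a^{t/2}}b^t$ lies in $[e^{-t(\alpha+\beta)},e^{t(\alpha+\beta)}]$. Functional calculus on $c$ then puts the spectrum of $\log c^{1/t}=\tfrac1t\log c$ in $[-(\alpha+\beta),\alpha+\beta]$, which is exactly the required bound. The only points to check carefully are that the order relation $x\le\lambda e$ is equivalent to a spectral bound, and that $U_x$ is a positive map for invertible positive $x$. Both are standard JB-algebra facts.
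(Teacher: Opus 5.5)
Your proposal is correct and follows the paper's proof essentially step by step. You transport the gyrogroup structure through $F_t$, obtain (GGV0) and (GGV5) from the gyrations being Jordan automorphisms, and get (GGV2) and (GGV6) from single-generator functional calculus and Lemma~\ref{lem:power-U}; for (GGV8) you use the same two-sided bound $e^{-t(\alpha+\beta)}e\le U_{a^{t/2}}b^t\le e^{t(\alpha+\beta)}e$, derived, as in the paper, by comparing only with multiples of $e$ and using positivity of $U_{a^{t/2}}$.
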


\begin{proof}
By Proposition~\ref{prop:gyrogroup-JB} and \eqref{eq:Ft-gyroiso},
$(\Om,\oplus_t)$ is a gyrocommutative gyrogroup.  Put
$K=\gyr_t[a,b]$.  By \eqref{eq:gyr-t}, $K$ is a Jordan automorphism, hence
\begin{equation}\label{eq:Kpowers}
 K(c^r)=K(c)^r,
\end{equation}
and
\begin{equation}\label{eq:Klog}
 \log K(c)=K(\log c).
\end{equation}
We verify the GGV axioms.

For (GGV0), Jordan automorphisms are isometric, and hence
\[
 \|\phi(Kc)\|=\|\log K(c)\|=\|K(\log c)\|=\|\log c\|.
\]
Axioms (GGV1) and (GGV3) are immediate.  For (GGV2), functional calculus in
the singly generated JB-subalgebra gives
\[
 U_{a^{r_1t/2}}a^{r_2t}=a^{t(r_1+r_2)},
\]
so
\[
 (r_1\otimes a)\oplus_t(r_2\otimes a)
 =a^{r_1+r_2}=(r_1+r_2)\otimes a.
\]
For (GGV4), if $a\ne e$ and $r\ne0$, then
\[
 \phi(|r|\otimes a)=|r|\log a,
 \qquad
 \|\phi(r\otimes a)\|=|r|\|\log a\|,
\]
which gives the required normalized-direction identity.  Axiom (GGV5)
follows from \eqref{eq:Kpowers}.

For (GGV6), put $x=a^{r_1}$ and $y=a^{r_2}$.  Then
\[
 U_{x^{t/2}}y^t=a^{t(r_1+r_2)},
\]
and Lemma~\ref{lem:power-U} gives
\[
 \gyr_t[x,y]
 =U_{a^{-t(r_1+r_2)/2}}U_{a^{r_1t/2}}U_{a^{r_2t/2}}
 =I.
\]

For (GGVV), given $s\ge0$, let $a_s=\exp(se)$.  Then
\[
 \|\phi(a_s)\|=\|se\|=s.
\]
Therefore
\[
 \{\pm\|\phi(a)\|:a\in\Om\}=\mathbb R,
\]
and we equip this set with the usual real addition and scalar
multiplication.  Thus it has real value-line structure.  Axiom (GGV7) follows from
\[
 \|\phi(r\otimes a)\|=\|\log a^r\|=|r|\|\log a\|.
\]

It remains to prove (GGV8).  Put
\[
 \alpha=\|\log a\|,\qquad \beta=\|\log b\|.
\]
Then
\[
 e^{-\alpha}e\le a\le e^\alpha e,
 \qquad
 e^{-\beta}e\le b\le e^\beta e.
\]
By one-variable continuous functional calculus,
\[
 e^{-t\alpha}e\le a^t\le e^{t\alpha}e,
 \qquad
 e^{-t\beta}e\le b^t\le e^{t\beta}e.
\]
No general operator monotonicity of $s\mapsto s^t$ is used here.  Put
$d=U_{a^{t/2}}b^t$.  Since $U_{a^{t/2}}$ is positive,
\[
 e^{-t\beta}a^t\le d\le e^{t\beta}a^t,
\]
and hence
\[
 e^{-t(\alpha+\beta)}e\le d\le e^{t(\alpha+\beta)}e.
\]
Therefore
\[
 \sigma(d)\subset
 [e^{-t(\alpha+\beta)},e^{t(\alpha+\beta)}],
\]
so spectral mapping gives
\[
 \|\log d\|\le t(\alpha+\beta).
\]
Since $a\oplus_t b=d^{1/t}$,
\[
 \|\phi(a\oplus_t b)\|
 =\frac1t\|\log d\|
 \le\|\phi(a)\|+\|\phi(b)\|.
\]
This is (GGV8).
\end{proof}

\section{Thompson geometry and global isometries}

For a unital JB-algebra $A$, let $\rho_{t,A}$ denote the gyrometric of
Theorem~\ref{thm:JB-GGV}.  By definition,
\[
 \rho_{t,A}(a,b)=\|\phi(a\ominus_t b)\|.
\]
Equivalently, using symmetry if convenient, one obtains the following formula.

\begin{proposition}\label{prop:rho-Thompson}
For every $t>0$ and $a,b\in A_+^\circ$,
\begin{equation}\label{eq:rho-Thompson}
 \rho_{t,A}(a,b)
 =\frac1t\bigl\|\log U_{a^{-t/2}}b^t\bigr\|
 =\frac1t\dT(a^t,b^t).
\end{equation}
\end{proposition}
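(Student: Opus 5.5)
We compute the gyrometric directly from its definition, $\rho_{t,A}(a,b)=\|\phi(a\ominus_t b)\|$, after first identifying the gyroinverse in $(\Om,\oplus_t)$. By \cite[Proposition~2.1(v)]{AHrevisited}, $\ominus_t b=(-1)\otimes b=b^{-1}$. This can also be checked by hand: $b\oplus_t b^{-1}=(U_{b^{t/2}}b^{-t})^{1/t}=e$ by one-variable functional calculus. Hence
\[
 a\ominus_t b=\bigl(U_{a^{t/2}}b^{-t}\bigr)^{1/t}, \qquad
 \phi(a\ominus_t b)=\tfrac1t\log\bigl(U_{a^{t/2}}b^{-t}\bigr),
\]
where the second identity uses $\log(x^{1/t})=\frac1t\log x$ for $x\in\Om$. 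This gives $\rho_{t,A}(a,b)=\frac1t\|\log U_{a^{t/2}}b^{-t}\|$.

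To reach the form $\|\log U_{a^{-t/2}}b^t\|$ we use two facts.
\begin{itemize}
\item By \eqref{eq:Uab-inverse}, $(U_{a^{t/2}}b^{-t})^{-1}=U_{a^{-t/2}}b^{t}$.
\item For any $x\in\Om$, $\|\log x^{-1}\|=\|{-\log x}\|=\|\log x\|$.
\end{itemize}
Together these give
\[
 \rho_{t,A}(a,b)=\frac1t\bigl\|\log U_{a^{-t/2}}b^t\bigr\|.
\]
The symmetry of the gyrometric, \cite[Proposition~3.5]{AHrevisited}, is consistent with this, although it is not needed for the computation.

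For the second equality we recall that Thompson's metric on a JB-cone is
\[
 \dT(x,y)=\log\max\{M(x/y),M(y/x)\},
\]
where $M(x/y)=\inf\{\lambda>0:x\le\lambda y\}$. The standard identity we want is
\[
 \dT(x,y)=\bigl\|\log U_{x^{-1/2}}y\bigr\|.
\]
To prove it, set $c=U_{x^{-1/2}}y\in\Om$. The map $U_{x^{-1/2}}$ is a linear order-automorphism of the cone with inverse $U_{x^{1/2}}$ (by \eqref{eq:Uinverse}), and it sends $x$ to $e$. Therefore $\lambda^{-1}x\le y\le\mu x$ is equivalent to $\lambda^{-1}e\le c\le\mu e$. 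In the singly generated JB-subalgebra, the latter holds if and only if $\sigma(c)\subset[\lambda^{-1},\mu]$. Taking the optimal $\lambda,\mu$ and applying the spectral mapping theorem,
\[
 \dT(x,y)=\max\{\log\max\sigma(c),\,-\log\min\sigma(c)\}=\|\log c\|.
\]
Applying this with $x=a^t$ and $y=b^t$, and noting $(a^t)^{-1/2}=a^{-t/2}$, finishes the proof.

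The main obstacle is this Thompson identity. Specifically, we must justify that order bounds by scalar multiples of $e$ correspond exactly to spectral containment, so that $\|\log c\|=\max(|\log\min\sigma(c)|,|\log\max\sigma(c)|)$. This uses the spectral norm on the associative $C(\sigma(c))$-type subalgebra generated by $c$ and $e$, the same device already used in the proof of (GGV8). Everything else is formal manipulation of functional calculus and \eqref{eq:Uab-inverse}.
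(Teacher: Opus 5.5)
Your proposal is correct and follows essentially the same route as the paper. The paper also identifies $\ominus_t b=b^{-1}$, writes $\rho_{t,A}(a,b)=\frac1t\|\log U_{a^{t/2}}b^{-t}\|$, inverts via \eqref{eq:Uab-inverse} together with $\log x^{-1}=-\log x$, and substitutes $x=a^t$, $y=b^t$ into $\dT(x,y)=\|\log U_{x^{-1/2}}y\|$. The only difference is that the paper cites this last identity from \cite[Proposition~2.4]{LRW}, whereas you derive it directly from the order automorphism $U_{x^{-1/2}}$ and spectral containment in the singly generated subalgebra; that derivation is valid, and the final equality $\max\{\log\max\sigma(c),-\log\min\sigma(c)\}=\|\log c\|$ holds because $\log$ is increasing on $\sigma(c)$.
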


\begin{proof}
From the definition of $\oplus_t$ and the gyroinverse $b^{-1}$,
\[
 \rho_{t,A}(a,b)
 =\frac1t\bigl\|\log U_{a^{t/2}}b^{-t}\bigr\|.
\]
By \eqref{eq:Uab-inverse},
\[
 \bigl(U_{a^{t/2}}b^{-t}\bigr)^{-1}=U_{a^{-t/2}}b^t.
\]
Since $\log(x^{-1})=-\log x$,
\[
 \rho_{t,A}(a,b)
 =\frac1t\bigl\|\log U_{a^{-t/2}}b^t\bigr\|.
\]
By \cite[Proposition~2.4]{LRW},
\[
 \dT(x,y)=\|\log U_{x^{-1/2}}y\|.
\]
Taking $x=a^t$ and $y=b^t$ proves the result.
\end{proof}

\begin{proposition}\label{prop:rho-topology}
For every $t>0$, the topology induced by $\rho_{t,A}$ on $A_+^\circ$
coincides with the norm topology.
\end{proposition}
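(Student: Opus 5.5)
We reduce to the case $t=1$ by means of the power map $F_t(a)=a^t$. By Proposition~\ref{prop:rho-Thompson}, $\rho_{t,A}(a,b)=\frac1t\dT(a^t,b^t)$, so $F_t$ is an isometry from $(A_+^\circ,t\rho_{t,A})$ onto $(A_+^\circ,\dT)$. The map $a\mapsto a^t$ is a norm homeomorphism of $A_+^\circ$ onto itself, with inverse $a\mapsto a^{1/t}$. This follows from norm continuity of the continuous functional calculus $a\mapsto f(a)$ for a continuous function $f$ on $(0,\infty)$, restricted to elements whose spectrum lies in a fixed compact subinterval; such a subinterval exists locally around any $a\in A_+^\circ$. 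It therefore suffices to show that $\dT$ induces the norm topology on $A_+^\circ$.

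For the Thompson metric we use the order-unit description
\[
 \dT(x,y)=\log\max\{M(x/y),M(y/x)\},\qquad M(x/y)=\inf\{\lambda>0:x\le\lambda y\}.
\]
Fix $x\in A_+^\circ$, and choose $\delta>0$ with $x\ge\delta e$. We compare small balls around $x$ in both directions.

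\emph{Norm to Thompson.} Suppose $\|y-x\|<\varepsilon\le\delta/2$. Then $-\varepsilon e\le y-x\le\varepsilon e\le(\varepsilon/\delta)x$, since in a JB-algebra $\|z\|\le\varepsilon$ implies $-\varepsilon e\le z\le\varepsilon e$. Hence
\[
 (1-\varepsilon/\delta)x\le y\le(1+\varepsilon/\delta)x,
\]
which gives $\dT(x,y)\le-\log(1-\varepsilon/\delta)\to0$ as $\varepsilon\to0$.

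\emph{Thompson to norm.} Suppose $\dT(x,y)<\eta$. Then $e^{-\eta}x\le y\le e^{\eta}x$, so
\[
 -(1-e^{-\eta})x\le y-x\le(e^\eta-1)x\le(e^\eta-1)\|x\|e .
\]
Since the order-unit norm of $A$ coincides with the JB-norm, this yields $\|y-x\|\le(e^\eta-1)\|x\|\to0$.

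Alternatively, one could work directly with the formula $\|\log U_{a^{-t/2}}b^t\|$: continuity of $U$, of $\log$, and of powers in norm gives one direction. The inverse direction, that $\rho_{t,A}$-convergence implies norm convergence, would then require inverting $U_{a^{-t/2}}$ together with continuity of $\exp$ and of the $1/t$-power. The order-interval argument above avoids this.

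The main obstacle we expect is the rigorous norm-continuity of $a\mapsto a^t$ and $a\mapsto a^{1/t}$ on $A_+^\circ$. Our plan is to approximate $s\mapsto s^t$ uniformly by polynomials on an interval $[\delta/2,2\|a\|]$ that contains the spectra of all $b$ in a small norm ball around $a$. Polynomial maps are norm continuous, and the $3\varepsilon$ argument then finishes, using $\|f(b)\|\le\sup_{\sigma(b)}|f|$ in the associative subalgebra generated by $b$ and $e$.
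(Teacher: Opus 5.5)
Your proof is correct. It shares the paper's first step: both reduce to $t=1$ through the power map $F_t$, using $\rho_{t,A}(a,b)=\frac1t\dT(a^t,b^t)$ and the fact that $F_t$ is a norm homeomorphism with inverse $F_{1/t}$. The difference is in how you show that $\dT$ induces the norm topology.

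The paper uses the Jordan-theoretic formula $\dT(x,y)=\|\log U_{x^{-1/2}}y\|$ from \cite[Proposition~2.4]{LRW} and argues with sequences. If $y_n\to x$ in norm, then $U_{x^{-1/2}}y_n\to e$, so $\log U_{x^{-1/2}}y_n\to0$. Conversely, if $\log U_{x^{-1/2}}y_n\to0$, then $U_{x^{-1/2}}y_n\to e$, which tacitly uses continuity of $\exp$, and applying $U_{x^{1/2}}$ returns $y_n\to x$.

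You instead use the order-theoretic definition via $M(x/y)$ and sandwich $y$ between multiples of $x$. This gives explicit local bounds $\dT(x,y)\le-\log(1-\|y-x\|/\delta)$ and $\|y-x\|\le(e^{\dT(x,y)}-1)\|x\|$. For the lower half of the last sandwich you should also record $-(1-e^{-\eta})x\ge-(e^\eta-1)\|x\|e$, which holds because $x\le\|x\|e$ and $1-e^{-\eta}\le e^\eta-1$.

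The paper's route is shorter once the cited formula is granted, but it also relies on norm continuity of $\log$ and $\exp$, which it does not spell out. Your route needs only positivity and the fact that the JB-norm is the order-unit norm. It is therefore quantitative, and it works verbatim for Thompson's metric on the interior of the positive cone of any order-unit space. It is essentially the same order-interval argument the paper itself uses for Hilbert's metric on the section $\Sigma_\omega$ in Proposition~\ref{prop:projective-continuity}.

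You also justify something the paper merely asserts: norm continuity of $a\mapsto a^{t}$ and $a\mapsto a^{1/t}$. You do this by polynomial approximation on the spectral interval $[\delta/2,2\|a\|]$, which works uniformly over a norm ball around $a$.
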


\begin{proof}
By Proposition~\ref{prop:rho-Thompson},
\[
 \rho_{t,A}(a,b)=\frac1t\dT(a^t,b^t).
\]
The power map $F_t(a)=a^t$ is a norm homeomorphism of $A_+^\circ$ with inverse
$F_{1/t}$.  It remains to recall that the Thompson-metric topology agrees with
the norm topology.  Indeed,
\[
 \dT(x,y)=\|\log U_{x^{-1/2}}y\|.
\]
If $y_n\to x$ in norm, then $U_{x^{-1/2}}y_n\to e$, and continuous
functional calculus gives $\dT(x,y_n)\to0$.  Conversely, if
$\dT(x,y_n)\to0$, then
$\log U_{x^{-1/2}}y_n\to0$, hence $U_{x^{-1/2}}y_n\to e$; applying the
inverse $U_{x^{1/2}}$ yields $y_n\to x$ in norm.
\end{proof}

\begin{corollary}\label{cor:JB-continuous-standard}
For every $t>0$, $(A_+^\circ,\oplus_t,\otimes,\phi)$ is a standard-norm GGV.
Hence it is contractible and simply connected.
\end{corollary}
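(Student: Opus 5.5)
By Theorem~\ref{thm:JB-GGV}, $(A_+^\circ,\oplus_t,\otimes,\phi)$ is already a GGV with real value-line structure. To show it is standard-norm, it therefore remains to verify the fixed-scalar continuity required in Definition~\ref{def:standard-norm}. The contractibility and simple connectedness then follow immediately from Corollary~\ref{cor:contractible}.

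Fix $r\in\mathbb R$ and consider $S_r(a)=r\otimes a=a^r$. By Proposition~\ref{prop:rho-topology}, the $\rho_{t,A}$-topology on $A_+^\circ$ coincides with the norm topology. It is therefore enough to show that $a\mapsto a^r$ is norm continuous on $A_+^\circ$. To do this, I would write $a^r=\exp(r\log a)$. Then it suffices to know that $\log:A_+^\circ\to A$ and $\exp:A\to A_+^\circ$ are norm continuous.

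For $\exp$, continuity follows from the norm-convergent power series, using power-associativity. For $\log$, fix $a_0$ with $\sigma(a_0)\subset[\delta,\delta^{-1}]$. For $a$ near $a_0$ we still have $\sigma(a)\subset[\delta/2,2/\delta]$. On this interval, I would approximate $\log$ uniformly by polynomials $p_n$. Then
\[
\|\log a-\log a_0\|\le \|\log a-p_n(a)\|+\|p_n(a)-p_n(a_0)\|+\|p_n(a_0)-\log a_0\|.
\]
The first and third terms are small uniformly by isometric functional calculus in the singly generated subalgebras. The middle term is small because polynomials are norm continuous. This is the only point requiring care, namely the uniformity of the spectral bound near $a_0$. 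I would obtain it from the order estimates $\|a-a_0\|<\varepsilon\Rightarrow a_0-\varepsilon e\le a\le a_0+\varepsilon e$.

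An alternative, more intrinsic route uses the Thompson formula directly. By Proposition~\ref{prop:rho-Thompson} and Lemma~\ref{lem:real-value-line-rays}-type scaling, one could aim for the estimate $\rho_{t,A}(a^r,b^r)\le|r|\rho_{t,A}(a,b)$. However, this requires a Löwner–Heinz-type inequality for $|r|\le1$, which I would avoid. The functional-calculus argument above suffices, so I expect no real obstacle beyond the routine continuity of functional calculus.
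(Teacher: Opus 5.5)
Your proposal is correct and follows the paper's own proof: use Proposition~\ref{prop:rho-topology} to reduce to the norm topology, get norm continuity of $a\mapsto a^r=\exp(r\log a)$ from continuous functional calculus, and then invoke Corollary~\ref{cor:contractible}. The paper states joint continuity in $(r,a)$ in one line, whereas you prove only the fixed-$r$ continuity that Definition~\ref{def:standard-norm} requires and fill in the details the paper leaves implicit, namely the local uniform spectral bound and the polynomial approximation of $\log$.
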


\begin{proof}
By Proposition~\ref{prop:rho-topology}, it is enough to use the norm topology.
The map
\[
 (r,a)\longmapsto a^r=\exp(r\log a)
\]
is jointly norm continuous on $\mathbb R\times A_+^\circ$ by continuous
functional calculus.
\end{proof}

\begin{proposition}\label{prop:power-conjugacy}
Let $A,B$ be unital JB-algebras and $t>0$.  Define
\[
 F_{t,A}(a)=a^t,\qquad F_{t,B}(b)=b^t.
\]
For a map $f:A_+^\circ\to B_+^\circ$, put
\[
 H=F_{t,B}\circ f\circ F_{t,A}^{-1}.
\]
Then $f$ is a bijective $\rho_t$-isometry if and only if $H$ is a bijective
Thompson isometry.
\end{proposition}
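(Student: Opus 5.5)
The plan is to rewrite the $\rho_t$-isometry condition for $f$ as a Thompson-isometry condition for $H$, using Proposition~\ref{prop:rho-Thompson} and the fact that the power maps are bijections. First I would record that $F_{t,A}:A_+^\circ\to A_+^\circ$ is a bijection with inverse $F_{1/t,A}$. This holds because, by one-variable continuous functional calculus in the JB-subalgebra generated by $a$ and $e$, we have $(a^t)^{1/t}=a$. The same statement holds for $B$. Consequently $f=F_{t,B}^{-1}\circ H\circ F_{t,A}$, and $f$ is bijective if and only if $H$ is bijective. So it remains only to compare the two distance-preservation conditions.

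For the distance comparison, fix $a,a'\in A_+^\circ$ and put $x=a^t$, $x'=a'^t$. Since $H(x)=f(a)^t$ and $H(x')=f(a')^t$, Proposition~\ref{prop:rho-Thompson}, applied once in $B$ and once in $A$, gives
\[
 \rho_{t,B}(f(a),f(a'))=\frac1t\,\dT\bigl(H(x),H(x')\bigr),
 \qquad
 \rho_{t,A}(a,a')=\frac1t\,\dT(x,x').
\]
Hence $\rho_{t,B}(f(a),f(a'))=\rho_{t,A}(a,a')$ holds if and only if $\dT(H(x),H(x'))=\dT(x,x')$. Because $F_{t,A}$ is surjective, the pairs $(x,x')$ obtained this way are exactly all pairs in $A_+^\circ\times A_+^\circ$. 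Therefore $f$ preserves $\rho_t$ exactly when $H$ preserves the Thompson metric, and combining this with the bijectivity equivalence proves both directions.

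I do not expect a real obstacle here. The only point needing care is that the surjectivity of the power map is what lets the quantifier over $a,a'$ become a quantifier over all $x,x'$; this is what makes the statement an equivalence rather than a one-way implication. The rest is a direct substitution into Proposition~\ref{prop:rho-Thompson}.
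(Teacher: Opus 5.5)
Your proposal is correct and follows the paper's proof: both substitute $x=a^t$, $x'=a'^t$ into Proposition~\ref{prop:rho-Thompson} on each side and use bijectivity of the power maps. You spell out two points the paper leaves implicit: the inverse $F_{1/t}$ of the power map, and the role of surjectivity in converting the quantifier over $a,a'$ into one over all $x,x'$.
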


\begin{proof}
If $x=a^t$ and $y=b^t$, then
$H(x)=f(a)^t$ and $H(y)=f(b)^t$.  By
Proposition~\ref{prop:rho-Thompson},
\[
 \dT(H(x),H(y))=t\rho_{t,B}(f(a),f(b)),
 \qquad
 \dT(x,y)=t\rho_{t,A}(a,b).
\]
The assertion follows because both power maps are bijections.
\end{proof}

\begin{theorem}[Global classification]\label{thm:global-classification}
Let $A,B$ be unital JB-algebras and let $t>0$.  A map
$f:A_+^\circ\to B_+^\circ$ is a bijective $\rho_t$-isometry if and only if
there exist a Jordan isomorphism $J:A\to B$ and a central projection $p\in B$
such that
\begin{equation}\label{eq:global-form}
 f(a)=
 \left[
 U_{f(e)^{t/2}}
 \bigl(pJ(a^t)+p^\perp J(a^{-t})\bigr)
 \right]^{1/t}
\end{equation}
for every $a\in A_+^\circ$, where $p^\perp=e-p$.
\end{theorem}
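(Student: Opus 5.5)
The proof reduces to the $t=1$ case through Proposition~\ref{prop:power-conjugacy}, and then invokes the Thompson isometry classification of Lemmens, Roelands and Wortel \cite{LRW}. That result says a bijection $H:A_+^\circ\to B_+^\circ$ is a Thompson isometry if and only if
\[
 H(x)=U_{H(e)^{1/2}}\bigl(pJ(x)+p^\perp J(x^{-1})\bigr)
\]
for some Jordan isomorphism $J:A\to B$ and some central projection $p\in B$. We will use it in exactly this normalized form. If the version in \cite{LRW} is stated with $U_{y^{1/2}}$ for an arbitrary $y\in B_+^\circ$, evaluate at $x=e$: since $J(e)=e$ and $pe+p^\perp e=e$, this gives $H(e)=U_{y^{1/2}}e=y$, so the normalized form follows.

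For the forward direction, let $f$ be a bijective $\rho_t$-isometry and set $H=F_{t,B}\circ f\circ F_{t,A}^{-1}$. By Proposition~\ref{prop:power-conjugacy}, $H$ is a bijective Thompson isometry, so it has the form above. We have $H(e)=f(e)^t$, hence $H(e)^{1/2}=f(e)^{t/2}$ by functional calculus. Writing $x=a^t$ and using $x^{-1}=a^{-t}$, we get
\[
 f(a)^t=H(a^t)=U_{f(e)^{t/2}}\bigl(pJ(a^t)+p^\perp J(a^{-t})\bigr).
\]
The right-hand side lies in $B_+^\circ$, so taking $1/t$-th powers yields \eqref{eq:global-form}.

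For the converse, suppose $f$ is given by \eqref{eq:global-form} with $c:=f(e)$ regarded as an arbitrary element of $B_+^\circ$. First note consistency: at $a=e$ the formula returns $(U_{c^{t/2}}e)^{1/t}=(c^t)^{1/t}=c$. Then $H=F_{t,B}\circ f\circ F_{t,A}^{-1}$ is $x\mapsto U_{c^{t/2}}(pJ(x)+p^\perp J(x^{-1}))$, which is of the LRW form and is therefore a bijective Thompson isometry. This can also be checked directly. The map $x\mapsto pJ(x)+p^\perp J(x^{-1})$ is a bijection of $B_+^\circ$: since $p$ is central, $B=pB\oplus p^\perp B$, and on each summand the map is a Jordan isomorphism or an inversion composed with one. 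It preserves $d_T$ because Jordan isomorphisms and inversion are $d_T$-isometries and $d_T$ on a direct sum is the maximum of the two components. Finally, $U_{c^{t/2}}$ is a linear order isomorphism fixing the cone, so it is also a $d_T$-isometry. Proposition~\ref{prop:power-conjugacy} then shows that $f$ is a bijective $\rho_t$-isometry.

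The main obstacle is bookkeeping: matching the exact statement of the LRW theorem, namely its normalization by $H(e)$, its placement of the central projection relative to $J$ (on the $B$ side as here, or as $J(q)$ for a central projection $q\in A$), and the identification $H(e)^{1/2}=f(e)^{t/2}$. Everything else is functional calculus plus Proposition~\ref{prop:power-conjugacy}.
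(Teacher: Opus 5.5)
Your proof is correct. The sufficiency half is exactly the paper's: you conjugate by the power maps and recognize $x\mapsto U_{c^{t/2}}(pJ(x)+p^\perp J(x^{-1}))$ as an LRW Thompson isometry.

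For necessity you take a genuinely different route. You conjugate $f$ to the bijective Thompson isometry $H$ and read off its form from \cite[Theorem~3.2]{LRW}. The only work left is the bookkeeping $H(e)=f(e)^t$ and $H(e)^{1/2}=f(e)^{t/2}$, which you handle correctly.

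The paper deliberately avoids the LRW classification at this point. Its argument runs as follows:
\begin{enumerate}
\item Normalize by the left gyrotranslation $g(a)=f(e)^{-1}\oplus_t f(a)$.
\item Invoke the GGV Mazur--Ulam theorem \cite[Corollary~4.5]{AHrevisited} to get that $g$ preserves $\oplus_t$ and $\otimes$, so $g(a^r)=g(a)^r$.
\item Linearize via $S(x)=\log g(e^x)$, and show $S$ is a surjective norm isometry $A\to B$ using the infinitesimal formula from \cite[Proposition~2.5]{LRW}.
\item Apply the classical Mazur--Ulam theorem and then Isidro--Rodr\'iguez-Palacios \cite{IRP} to write $S=s\circ J$ with a central symmetry $s=p-p^\perp$.
\end{enumerate}

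Your route is shorter and makes the theorem an immediate corollary of the $t=1$ case. This is precisely the correspondence noted in Remark~\ref{rem:conjugate-Thompson}. The paper's necessity argument uses LRW only for metric formulas. It extracts the Jordan isomorphism and the central projection from the gyrovector structure itself, which also explains why the normalized map is independent of $t$ (Remark~\ref{rem:normalized-independent-t}).

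In the other direction, your optional direct check of sufficiency removes the dependence on LRW there, which the paper does not do. That check uses the central decomposition of $d_T$ and the invariance of $d_T$ under Jordan isomorphisms, inversion, and the order automorphism $U_{c^{t/2}}$.

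One small slip: $x\mapsto pJ(x)+p^\perp J(x^{-1})$ is a bijection from $A_+^\circ$ onto $B_+^\circ$, not a bijection of $B_+^\circ$. With $q=J^{-1}(p)$, it acts as $J$ on $(qA)_+^\circ$ and as $J\circ(\cdot)^{-1}$ on $(q^\perp A)_+^\circ$.
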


\begin{proof}
Suppose first that $f$ is a bijective $\rho_t$-isometry and put $c=f(e)$.
Normalize $f$ by the left gyrotranslation by $c^{-1}$:
\begin{equation}\label{eq:normalize-g}
 g(a)=c^{-1}\oplus_t f(a)
 =\bigl(U_{c^{-t/2}}f(a)^t\bigr)^{1/t}.
\end{equation}
Then $g(e)=e$, and left gyrotranslations preserve the gyrometric, so $g$ is a
bijective $\rho_t$-isometry.  By
\cite[Corollary~4.5]{AHrevisited}, the normalized map preserves the GGV
operations:
\begin{equation}\label{eq:g-oplus}
 g(a\oplus_t b)=g(a)\oplus_t g(b),
\end{equation}
and
\begin{equation}\label{eq:g-scalar}
 g(r\otimes a)=r\otimes g(a).
\end{equation}
In particular,
\begin{equation}\label{eq:g-powers}
 g(a^r)=g(a)^r.
\end{equation}

Define
\begin{equation}\label{eq:S-log}
 S:A\to B,
 \qquad
 S(x)=\log g(e^x).
\end{equation}
Then $S$ is bijective and $S(0)=0$.  From \eqref{eq:g-powers},
\[
 g(e^{x/n})=e^{S(x)/n}.
\]
By Proposition~\ref{prop:rho-Thompson} and
\cite[Proposition~2.5]{LRW},
\begin{equation}\label{eq:infinitesimal}
 \lim_{n\to\infty}n\rho_{t,A}(e^{x/n},e^{y/n})=\|x-y\|,
\end{equation}
and similarly in $B$.  Therefore
\[
\begin{aligned}
 \|S(x)-S(y)\|
 &=\lim_{n\to\infty}
 n\rho_{t,B}(e^{S(x)/n},e^{S(y)/n})\\
 &=\lim_{n\to\infty}
 n\rho_{t,A}(e^{x/n},e^{y/n})\\
 &=\|x-y\|.
\end{aligned}
\]
Thus $S$ is a surjective isometry of real Banach spaces fixing $0$.  By the
classical Mazur--Ulam theorem, $S$ is real linear.

By \cite[Theorem~1.4]{IRP}, there exist a Jordan isomorphism $J:A\to B$ and a
central symmetry $s\in B$ such that
\[
 S(x)=s\circ J(x).
\]
Put
\[
 p=\frac{e+s}{2}.
\]
Then $p$ is a central projection, $p^\perp=e-p$, and
$s=p-p^\perp$.  Since the two central summands are orthogonal,
\[
 g(e^x)=e^{S(x)}
 =p e^{J(x)}+p^\perp e^{-J(x)}
 =pJ(e^x)+p^\perp J(e^{-x}).
\]
Hence
\begin{equation}\label{eq:normalized-global-form}
 g(a)=pJ(a)+p^\perp J(a^{-1}).
\end{equation}
Undoing \eqref{eq:normalize-g},
\[
 f(a)^t=U_{c^{t/2}}g(a)^t.
\]
Since $p,p^\perp$ are central orthogonal projections and $J$ commutes with
continuous functional calculus,
\[
 g(a)^t=pJ(a^t)+p^\perp J(a^{-t}).
\]
Taking the positive $t$-th root gives \eqref{eq:global-form}.

Conversely, suppose that \eqref{eq:global-form} holds.  Put $c=f(e)$ and
\[
 H(x)=f(x^{1/t})^t.
\]
Then
\[
 H(x)=U_{c^{t/2}}\bigl(pJ(x)+p^\perp J(x^{-1})\bigr).
\]
Moreover $H(e)=c^t$ and $H(e)^{1/2}=c^{t/2}$.  Thus $H$ is of the form
classified in \cite[Theorem~3.2]{LRW}, and hence is a bijective Thompson
isometry.  Proposition~\ref{prop:power-conjugacy} now implies that $f$ is a
bijective $\rho_t$-isometry.
\end{proof}

\begin{remark}\label{rem:normalized-independent-t}
The normalized map in the necessity part of
Theorem~\ref{thm:global-classification} is
\[
 g(a)=pJ(a)+p^\perp J(a^{-1}),
\]
which is independent of $t$.  The parameter $t$ appears only when the
normalization is undone.
\end{remark}

\begin{remark}\label{rem:conjugate-Thompson}
Proposition~\ref{prop:power-conjugacy} gives a bijective correspondence
between the bijective $\rho_t$-isometries and the bijective Thompson
isometries.  Thus the family of $\rho_t$-geometries is conjugate to Thompson
geometry through the power maps.  The GGV argument in the necessity part of
Theorem~\ref{thm:global-classification}, however, gives a direct route from the
gyrometric structure to the Jordan structure.
\end{remark}

\begin{corollary}\label{cor:rho1-Thompson}
For every $a,b\in A_+^\circ$,
\[
 \rho_{1,A}(a,b)=\dT(a,b).
\]
\end{corollary}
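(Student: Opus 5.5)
This is the case $t=1$ of Proposition~\ref{prop:rho-Thompson}, so the plan is simply to specialize that formula and check that nothing degenerates.

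Setting $t=1$ in \eqref{eq:rho-Thompson} gives
\[
 \rho_{1,A}(a,b)=\bigl\|\log U_{a^{-1/2}}b\bigr\|=\dT(a^1,b^1)=\dT(a,b).
\]
For this to be legitimate, three things must hold at $t=1$.

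\begin{enumerate}
\item The operation $\oplus_1$ of \eqref{eq:oplus-t} is exactly $\oplus$ of \eqref{eq:oplus1}, since $(U_{a^{1/2}}b)^{1}=U_{a^{1/2}}b$. Hence the gyroinverse is $a^{-1}$ by Proposition~\ref{prop:gyrogroup-JB}.
\item The power map $F_1$ is the identity, so the conjugation in Proposition~\ref{prop:rho-Thompson} is trivial.
\item No step in the proof of Proposition~\ref{prop:rho-Thompson} used $t\neq1$.
\end{enumerate}

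If one prefers a self-contained argument, I would redo the short computation directly. By definition, $\rho_{1,A}(a,b)=\|\log(a\oplus b^{-1})\|=\|\log U_{a^{1/2}}b^{-1}\|$. By \eqref{eq:Uab-inverse}, $(U_{a^{1/2}}b^{-1})^{-1}=U_{a^{-1/2}}b$, and $\log(x^{-1})=-\log x$ for $x\in A_+^\circ$, so
\[
 \rho_{1,A}(a,b)=\|\log U_{a^{-1/2}}b\|.
\]
Finally, \cite[Proposition~2.4]{LRW} identifies this quantity with $\dT(a,b)$.

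There is no genuine obstacle here. The only point needing care is the identity $\log(x^{-1})=-\log x$, which follows from functional calculus in the singly generated subalgebra, together with the fact that the norm is invariant under $x\mapsto -x$.
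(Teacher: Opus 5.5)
Your proposal is correct and matches the paper, which proves the corollary simply by setting $t=1$ in Proposition~\ref{prop:rho-Thompson}. Your optional self-contained computation just repeats that proposition's proof in the case $t=1$.
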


\begin{proof}
This is Proposition~\ref{prop:rho-Thompson} with $t=1$.
\end{proof}

\begin{corollary}[Thompson isometries]\label{cor:Thompson-global}
Let $A,B$ be unital JB-algebras.  A map
$f:A_+^\circ\to B_+^\circ$ is a bijective Thompson isometry if and only if
there exist a Jordan isomorphism $J:A\to B$ and a central projection $p\in B$
such that
\[
 f(a)=U_{f(e)^{1/2}}
 \bigl(pJ(a)+p^\perp J(a^{-1})\bigr)
\]
for every $a\in A_+^\circ$.
\end{corollary}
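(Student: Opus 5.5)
This is the case $t=1$ of Theorem~\ref{thm:global-classification}, so I would take that theorem and specialize it rather than argue afresh. First, by Corollary~\ref{cor:rho1-Thompson}, $\rho_{1,A}=\dT$ on $A_+^\circ$, and likewise $\rho_{1,B}=\dT$ on $B_+^\circ$. Consequently a map $f:A_+^\circ\to B_+^\circ$ is a bijective Thompson isometry exactly when it is a bijective $\rho_1$-isometry.

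Next I would apply Theorem~\ref{thm:global-classification} with $t=1$. Formula \eqref{eq:global-form} then reads
\[
 f(a)=\Bigl[U_{f(e)^{1/2}}\bigl(pJ(a)+p^\perp J(a^{-1})\bigr)\Bigr]^{1},
\]
and since the outer power is the first power, this is exactly the displayed formula in the corollary. Both directions of the equivalence carry over verbatim.

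Two routine points need checking. The bracketed element is already in $B_+^\circ$, so no root-taking issue arises at $t=1$. The central projection $p$ and the Jordan isomorphism $J$ are the same data as in the theorem. There is no real obstacle here. The only care needed is that the sufficiency direction of the theorem was itself derived from \cite[Theorem~3.2]{LRW} via Proposition~\ref{prop:power-conjugacy}, and at $t=1$ that conjugacy is the identity. So one could equally cite \cite[Theorem~3.2]{LRW} directly for sufficiency.
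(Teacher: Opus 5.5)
Your proposal is correct and is exactly the paper's argument: identify $\rho_{1}$ with $\dT$ on both cones via Corollary~\ref{cor:rho1-Thompson}, then specialize Theorem~\ref{thm:global-classification} to $t=1$. At $t=1$ the outer power in \eqref{eq:global-form} is the identity, so the formula reduces to the one in the corollary.
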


\begin{proof}
Use Corollary~\ref{cor:rho1-Thompson} and
Theorem~\ref{thm:global-classification} with $t=1$.
\end{proof}

\begin{remark}
For unital $C^*$-algebras, the corresponding global classification of
bijective Thompson isometries was obtained by Hatori and Moln\'ar
\cite{HatoriMolnar}.  Corollary~\ref{cor:Thompson-global} agrees with the
classification on positive cones of unital JB-algebras due to Lemmens,
Roelands and Wortel \cite[Theorem~3.2]{LRW}, which extends the
$C^*$-algebraic setting.
\end{remark}

\section{The projective JB-cone and Hilbert geometry}

Let $A$ be a unital JB-algebra with $A\ne\mathbb Re$, let
$\Omega=A_+^\circ$, and let
\[
 \mathbb P\Omega=\Omega/\mathbb R_{>0}
\]
be the space of positive rays.  We write $\bar a$ for the ray of $a$.  Put
\[
 [A]=A/\mathbb Re
\]
and equip this quotient with the variation norm
\begin{equation}\label{eq:variation-norm}
 \|[x]\|_v=\operatorname{diam}\sigma(x)
 =\max\sigma(x)-\min\sigma(x).
\end{equation}
This is a norm on $[A]$; see \cite{LRW,RW}.  In fact, the order-unit norm gives
\[
 \|[x]\|_v=2\inf_{\lambda\in\mathbb R}\|x-\lambda e\|,
\]
so it induces the usual quotient-norm topology.

For $t>0$ define
\begin{equation}\label{eq:projective-operations}
 \bar a\poplus_t\bar b
 =\overline{\bigl(U_{a^{t/2}}b^t\bigr)^{1/t}},
 \qquad
 r\potimes\bar a=\overline{a^r},
\end{equation}
and define
\begin{equation}\label{eq:projective-Phi}
 \Phi_A:\mathbb P\Omega\longrightarrow[A],
 \qquad
 \Phi_A(\bar a)=[\log a].
\end{equation}

\begin{lemma}\label{lem:projective-well-defined}
The operations in \eqref{eq:projective-operations} and the map
\eqref{eq:projective-Phi} are well defined.  Moreover, $\Phi_A$ is a bijection.
\end{lemma}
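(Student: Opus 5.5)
We check well-definedness of each operation directly from scaling behaviour, and then prove bijectivity of $\Phi_A$ through the exponential map.

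\emph{Gyroaddition.} Suppose $\bar a=\bar a'$ and $\bar b=\bar b'$, so $a'=\lambda a$ and $b'=\mu b$ with $\lambda,\mu>0$. The functional calculus in the singly generated subalgebra gives $(\lambda a)^{s}=\lambda^{s}a^{s}$. The quadratic representation is homogeneous of degree two, so $U_{\lambda^{t/2}a^{t/2}}=\lambda^{t}U_{a^{t/2}}$. Hence
\[
 U_{a'^{t/2}}b'^t=\lambda^t\mu^t\,U_{a^{t/2}}b^t .
\]
Taking the positive $1/t$-th power, again by functional calculus, multiplies the result by $\lambda\mu>0$. The ray is therefore unchanged. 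Positivity and invertibility of $U_{a^{t/2}}b^t$ were already used in Section~5, so the expression lies in $\Omega$.

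\emph{Scalar multiplication.} We have $(\lambda a)^r=\lambda^r a^r$ with $\lambda^r>0$, so $\overline{a^r}$ depends only on $\bar a$.

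\emph{The map $\Phi_A$.} Since $\lambda e$ commutes with $a$, we get $\log(\lambda a)=\log a+(\log\lambda)e$. Therefore $[\log(\lambda a)]=[\log a]$ in $A/\mathbb Re$, and $\Phi_A$ is well defined.

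\emph{Bijectivity.} We use that $\exp:A\to\Omega$ is a bijection with inverse $\log$, which is standard continuous functional calculus.

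For surjectivity, take any class $[x]$. Then $\Phi_A(\overline{e^x})=[x]$.

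For injectivity, suppose $[\log a]=[\log b]$. Then $\log b=\log a+\lambda e$ for some real $\lambda$. Because $e$ is central and operator commutes with everything, $\exp(\log a+\lambda e)=e^\lambda\exp(\log a)$. This holds because everything lies in the subalgebra generated by $\log a$ and $e$, which is associative. Hence $b=e^\lambda a$, so $\bar b=\bar a$.

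The only point requiring care is this use of $\exp(x+\lambda e)=e^\lambda\exp(x)$ and $\log(\lambda a)=\log a+(\log\lambda)e$. Both follow from functional calculus in the associative JB-subalgebra generated by one element and $e$, which is isomorphic to some $C(K)$. I do not expect a serious obstacle anywhere in the argument.
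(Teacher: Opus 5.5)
Your proof is correct and follows the paper's argument essentially step by step. Both use the homogeneities $U_{\lambda x}=\lambda^2U_x$ and $(\lambda a)^s=\lambda^s a^s$ for the two operations, $\log(\lambda a)=\log a+(\log\lambda)e$ for $\Phi_A$, surjectivity via $\Phi_A(\overline{e^x})=[x]$, and injectivity via functional calculus in the commutative subalgebra generated by one logarithm and $e$.
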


\begin{proof}
Let $\alpha,\beta>0$.  Since $U_{\lambda x}=\lambda^2U_x$,
\[
 U_{(\alpha a)^{t/2}}(\beta b)^t
 =\alpha^t\beta^tU_{a^{t/2}}b^t,
\]
so taking the positive $t$th root only multiplies the result by
$\alpha\beta$.  Thus $\poplus_t$ is independent of representatives.  Also
$(\alpha a)^r=\alpha^ra^r$, so $\potimes$ is well defined.  Finally,
\[
 \log(\alpha a)=(\log\alpha)e+\log a,
\]
so $[\log(\alpha a)]=[\log a]$.

Surjectivity of $\Phi_A$ follows from
$[x]=\Phi_A(\overline{e^x})$.  If
$[\log a]=[\log b]$, then
$\log a=\log b+\lambda e$ for some $\lambda\in\mathbb R$.  Functional
calculus in the commutative JB-subalgebra generated by $\log b$ and $e$ gives
$a=e^\lambda b$, so $\bar a=\bar b$.  Hence $\Phi_A$ is injective.
\end{proof}

\begin{theorem}\label{thm:projective-JB-GGV}
For every $t>0$,
\[
 (\mathbb P\Omega,\poplus_t,\potimes,\Phi_A)
\]
is a generalized gyrovector space with real value-line structure.
\end{theorem}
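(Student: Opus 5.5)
The plan is to deduce the projective statement from the affine one, Theorem~\ref{thm:JB-GGV}, by passing to the quotient map $\pi:\Omega\to\mathbb P\Omega$, $\pi(a)=\bar a$. Lemma~\ref{lem:projective-well-defined} shows that $\pi$ is a surjective homomorphism for $\oplus_t$ and $\otimes$. The kernel of $\pi$ is the central subgroup $\{\lambda e:\lambda>0\}$. Every gyrogroup identity in $(\Omega,\oplus_t)$ therefore descends: left identity $\bar e$, left inverse $\overline{a^{-1}}$, left gyroassociativity, the left loop property and gyrocommutativity.

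The projective gyration $\gyr^{\mathrm p}_t[\bar a,\bar b]$ should be induced by the affine gyration $K=\gyr_t[a,b]$ of \eqref{eq:gyr-t}. First I must check that $K$ is independent of representatives. By Lemma~\ref{lem:power-U} and $U_{\lambda x}=\lambda^2U_x$, the scalar factors cancel in $U_{d^{-1/2}}U_{a^{t/2}}U_{b^{t/2}}$. Since $K$ is a Jordan automorphism it fixes $e$, so it maps rays to rays. Hence $\bar c\mapsto\overline{Kc}$ is well defined and realizes the gyration of the quotient gyrogroup. With this in hand, (GGV1)--(GGV3), (GGV5) and (GGV6) follow directly by applying $\pi$ to the affine identities.

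For the norm axioms I use $\Phi_A(\bar a)=[\log a]$ and the variation norm. (GGV0) holds because $K$ is a unital Jordan automorphism, so $\sigma(K\log c)=\sigma(\log c)$ and $\|[\log Kc]\|_v=\|[K\log c]\|_v=\|[\log c]\|_v$. (GGV4) and (GGV7) follow from $[\log a^r]=r[\log a]$ and the homogeneity of $\|\cdot\|_v$; here $\bar a\ne\bar e$ exactly when $[\log a]\ne0$. For (GGVV), the value set is all of $\mathbb R$. I pick a non-scalar $x$, which exists because $A\ne\mathbb Re$, and scale $[x]$ to obtain every value $s\ge0$; the set is then given the ordinary real operations.

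The main obstacle is (GGV8), the projective triangle inequality $\|[\log d^{1/t}]\|_v\le\|[\log a]\|_v+\|[\log b]\|_v$ with $d=U_{a^{t/2}}b^t$. I would imitate the order argument used for (GGV8) in the proof of Theorem~\ref{thm:JB-GGV}, but with two-sided spectral bounds instead of symmetric ones.
\begin{enumerate}
\item Since representatives are free, normalize so that $\sigma(\log a)\subset[m_a,M_a]$ with $M_a-m_a=\|[\log a]\|_v$, and likewise for $b$.
\item Then $e^{tm_b}e\le b^t\le e^{tM_b}e$.
\item Applying the positive map $U_{a^{t/2}}$ gives $e^{tm_b}a^t\le d\le e^{tM_b}a^t$.
\item Combining with $e^{tm_a}e\le a^t\le e^{tM_a}e$ gives $\sigma(d)\subset[e^{t(m_a+m_b)},e^{t(M_a+M_b)}]$.
\end{enumerate}
By spectral mapping, $\operatorname{diam}\sigma(\log d)\le t(\|[\log a]\|_v+\|[\log b]\|_v)$. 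Dividing by $t$ yields (GGV8), since $\log d^{1/t}=\tfrac1t\log d$. The delicate points are the positivity of $U_{a^{t/2}}$ and the transitivity of the order bounds, both already used in the affine proof. I would also confirm in passing that the inequality does not depend on the chosen representatives, which is clear because $[\cdot]$ kills multiples of $e$.
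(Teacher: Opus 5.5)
Your proposal is correct and follows essentially the paper's route. You descend the affine gyrogroup and GGV identities through the quotient after checking $K_{\alpha a,\beta b}=K_{a,b}$, and you prove (GGV8) by the same order-bound argument using positivity of $U_{a^{t/2}}$. Your uncentered bounds $[m_a,M_a]$ work for any representative, so no renormalization is needed, whereas the paper centers with $\widetilde a=e^{-\mu_a}a$. One small slip: $K$ maps rays to rays because it is linear, not because it fixes $e$.
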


\begin{proof}
The quotient map $q:\Omega\to\mathbb P\Omega$ intertwines the operations on
$\Omega$ with \eqref{eq:projective-operations}.  The identity is $\bar e$ and
the gyroinverse of $\bar a$ is $\overline{a^{-1}}$.  It remains to check that
the gyrations are independent of representatives.  If
\[
 d=U_{a^{t/2}}b^t,
\]
then the affine gyration is
\[
 K_{a,b}=U_{d^{-1/2}}U_{a^{t/2}}U_{b^{t/2}}.
\]
Replacing $a,b$ by $\alpha a,\beta b$ replaces $d$ by
$(\alpha\beta)^td$, and the three scalar factors in the displayed product
cancel.  Hence $K_{\alpha a,\beta b}=K_{a,b}$.  The affine gyrogroup identities
therefore descend to the ray space, and $(\mathbb P\Omega,\poplus_t)$ is a
gyrocommutative gyrogroup.

Put $K=K_{a,b}$.  Since $K$ is a unital Jordan automorphism, it preserves
spectra, powers and logarithms.  Consequently
\[
 \|\Phi_A(K\bar c)\|_v
 =\|[K(\log c)]\|_v
 =\|[\log c]\|_v,
\]
which is (GGV0).  Axioms (GGV1) and (GGV3) are immediate.  The affine identity
\[
 U_{a^{r_1t/2}}a^{r_2t}=a^{t(r_1+r_2)}
\]
gives (GGV2) after passing to rays.  Furthermore,
\begin{equation}\label{eq:projective-linear-rays}
 \Phi_A(r\potimes\bar a)=r\Phi_A(\bar a).
\end{equation}
Thus (GGV4) and (GGV7) hold.  Since $K(a^r)=K(a)^r$, (GGV5) holds, and the
affine computation proving (GGV6) in Theorem~\ref{thm:JB-GGV} descends
unchanged to the quotient.

Since $A\ne\mathbb Re$, choose $x\in A$ with $\|[x]\|_v=1$.  For $s\ge0$,
\[
 \|\Phi_A(\overline{e^{sx}})\|_v=s,
\]
so
\[
 \{\pm\|\Phi_A(\bar a)\|_v:\bar a\in\mathbb P\Omega\}=\mathbb R.
\]
We give this set the ordinary real operations.  Hence (GGVV) is standard.

It remains to prove (GGV8).  Put
\[
 \alpha=\|[\log a]\|_v,
 \qquad
 \beta=\|[\log b]\|_v.
\]
Let
\[
 m_a=\min\sigma(\log a),\quad M_a=\max\sigma(\log a),\quad
 \mu_a=\frac{m_a+M_a}{2},
\]
and set $\widetilde a=e^{-\mu_a}a$.  Then $\widetilde a$ represents the same
ray as $a$ and
\[
 e^{-\alpha/2}e\le\widetilde a\le e^{\alpha/2}e.
\]
Define $\widetilde b$ similarly.  One-variable functional calculus gives
\[
 e^{-t\alpha/2}e\le\widetilde a^t\le e^{t\alpha/2}e,
 \qquad
 e^{-t\beta/2}e\le\widetilde b^t\le e^{t\beta/2}e.
\]
Here no operator-monotonicity assertion for $s\mapsto s^t$ is used,
including when $t>1$: the inequalities follow from ordinary one-variable
functional calculus in the commutative JB-subalgebra generated by
$\widetilde a$ and $e$, and separately in that generated by $\widetilde b$
and $e$.
Put $d=U_{\widetilde a^{t/2}}\widetilde b^t$.  Positivity of the quadratic
representation gives
\[
 e^{-t(\alpha+\beta)/2}e
 \le d\le
 e^{t(\alpha+\beta)/2}e.
\]
Therefore
\[
 \operatorname{diam}\sigma(\log d)\le t(\alpha+\beta).
\]
Since
$\bar a\poplus_t\bar b=\overline{d^{1/t}}$,
\[
\begin{aligned}
 \|\Phi_A(\bar a\poplus_t\bar b)\|_v
 &=\frac1t\operatorname{diam}\sigma(\log d)\\
 &\le\alpha+\beta
 =\|\Phi_A(\bar a)\|_v+\|\Phi_A(\bar b)\|_v.
\end{aligned}
\]
This is (GGV8).
\end{proof}

For $a,b\in\Omega$ put
\[
 M(a/b)=\inf\{\lambda>0:a\le\lambda b\}.
\]
Hilbert's projective metric is
\[
 \dH(\bar a,\bar b)
 =\log\bigl(M(a/b)M(b/a)\bigr).
\]
By \cite[Proposition~2.4]{LRW},
\begin{equation}\label{eq:Hilbert-JB-formula}
 \dH(\bar a,\bar b)
 =\bigl\|[\log U_{a^{-1/2}}b]\bigr\|_v,
\end{equation}
where symmetry allows either orientation.

Let $\prho_{t,A}$ denote the gyrometric of
Theorem~\ref{thm:projective-JB-GGV}.

\begin{proposition}\label{prop:prho-Hilbert}
For every $t>0$ and $a,b\in A_+^\circ$,
\begin{equation}\label{eq:prho-Hilbert}
 \prho_{t,A}(\bar a,\bar b)
 =\frac1t\bigl\|[\log U_{a^{-t/2}}b^t]\bigr\|_v
 =\frac1t\dH(\overline{a^t},\overline{b^t}).
\end{equation}
\end{proposition}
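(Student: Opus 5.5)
The proof will follow the affine argument of Proposition~\ref{prop:rho-Thompson} line by line, with the order-unit norm replaced by the variation norm and $d_T$ by $d_H$. First I unwind the gyrometric. Theorem~\ref{thm:projective-JB-GGV} shows that the gyroinverse of $\bar b$ is $\overline{b^{-1}}$. So by \eqref{eq:projective-operations},
\[
 \bar a\ominus\bar b=\bar a\poplus_t\overline{b^{-1}}
 =\overline{\bigl(U_{a^{t/2}}b^{-t}\bigr)^{1/t}}.
\]
Applying $\Phi_A$ gives $\bigl[\tfrac1t\log U_{a^{t/2}}b^{-t}\bigr]$, since $\log(x^{1/t})=\frac1t\log x$ by one-variable functional calculus. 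Homogeneity of the quotient norm then yields
\[
 \prho_{t,A}(\bar a,\bar b)=\frac1t\bigl\|[\log U_{a^{t/2}}b^{-t}]\bigr\|_v .
\]

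Second, I invert. By \eqref{eq:Uab-inverse}, $(U_{a^{t/2}}b^{-t})^{-1}=U_{a^{-t/2}}b^{t}$, and $\log(x^{-1})=-\log x$. The variation norm satisfies $\|[-y]\|_v=\|[y]\|_v$: either directly from $\operatorname{diam}\sigma(-y)=\operatorname{diam}\sigma(y)$, or simply because it is a norm. This gives the first equality in \eqref{eq:prho-Hilbert}.

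Third, I identify the result with Hilbert's metric. Apply \eqref{eq:Hilbert-JB-formula} to the representatives $a^t,b^t\in\Omega$. Since $(a^t)^{-1/2}=a^{-t/2}$ by functional calculus, it gives $d_H(\overline{a^t},\overline{b^t})=\|[\log U_{a^{-t/2}}b^t]\|_v$, which is the second equality.

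There is one point that needs care. The expression $\|[\log U_{a^{-t/2}}b^t]\|_v$ must not depend on the chosen representatives, since $d_H$ and $\prho$ are defined on rays. This follows from the scaling computation in Lemma~\ref{lem:projective-well-defined}: rescaling $a,b$ multiplies $U_{a^{-t/2}}b^t$ by a positive scalar, which shifts its logarithm by a multiple of $e$ and leaves the class in $[A]$ unchanged. I expect nothing here to be a real obstacle. The only subtle step is the sign/orientation issue in the second step, and it is resolved by the symmetry of the norm.
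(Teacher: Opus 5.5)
Your proof is correct and matches the paper's argument essentially step for step. You unwind the gyrometric using the gyroinverse $\overline{b^{-1}}$, invert via \eqref{eq:Uab-inverse} and $\log x^{-1}=-\log x$, and then apply \eqref{eq:Hilbert-JB-formula} to the representatives $a^t,b^t$. Your extra check that the middle expression does not depend on the chosen representatives is correct but not strictly needed: that expression is reached by a chain of equalities from the well-defined left-hand side.
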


\begin{proof}
The projective gyroinverse is $\overline{b^{-1}}$, so by definition
\[
 \prho_{t,A}(\bar a,\bar b)
 =\frac1t\bigl\|[\log U_{a^{t/2}}b^{-t}]\bigr\|_v.
\]
The inverse identity
\[
 (U_{a^{t/2}}b^{-t})^{-1}=U_{a^{-t/2}}b^t
\]
and $\log x^{-1}=-\log x$ give the first equality in
\eqref{eq:prho-Hilbert}.  The second follows from
\eqref{eq:Hilbert-JB-formula} and symmetry of Hilbert's metric.
\end{proof}

\begin{proposition}\label{prop:projective-continuity}
For every $t>0$, the projective GGV of
Theorem~\ref{thm:projective-JB-GGV} is a standard-norm GGV and is
contractible.  In particular, it is simply connected.
\end{proposition}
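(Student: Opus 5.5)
By Theorem~\ref{thm:projective-JB-GGV} the projective structure is already a GGV with real value-line structure. Two things remain. First, each fixed scalar map $S_r(\bar a)=\overline{a^r}$ must be continuous for $\prho_{t,A}$. Second, contractibility must be checked. Once continuity holds, contractibility also follows from Corollary~\ref{cor:contractible}. Even so, I would exhibit the convex-section contraction advertised in the introduction, since it is the natural geometric picture.

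The plan is to identify the gyrometric topology concretely by transporting everything through the bijection $\Phi_A:\mathbb P\Omega\to[A]$ of Lemma~\ref{lem:projective-well-defined}.

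\textbf{Step 1.} By \eqref{eq:projective-linear-rays}, $\Phi_A\circ S_r\circ\Phi_A^{-1}$ is multiplication by $r$ on $[A]$, which is continuous for $\|\cdot\|_v$. It therefore suffices to show that $\Phi_A$ is a homeomorphism from $(\mathbb P\Omega,\prho_{t,A})$ onto $([A],\|\cdot\|_v)$.

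\textbf{Step 2.} To get this homeomorphism, use the section $s([x])=e^{x-\mu(x)e}$, where $\mu(x)$ is the midpoint of $\sigma(x)$. This section is well defined on classes. It is norm continuous because $x\mapsto\min\sigma(x)$ and $x\mapsto\max\sigma(x)$ are norm continuous, and the variation norm induces the quotient topology. I would check that $s([x+\lambda e])=s([x])$, and that $s$ is continuous on $[A]$ via the quotient-norm equivalence noted after \eqref{eq:variation-norm}.

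\textbf{Step 3.} Compare $\prho_{t,A}$ with the norm on $[A]$. By Proposition~\ref{prop:prho-Hilbert},
\[
 \prho_{t,A}(\bar a,\bar b)=\tfrac1t\bigl\|[\log U_{a^{-t/2}}b^t]\bigr\|_v .
\]
If $[x_n]\to[x]$, take $a=s([x])$ and $b_n=s([x_n])$. Then $b_n\to a$ in norm, so $U_{a^{-t/2}}b_n^t\to e$ by continuity of $U$ and of functional calculus, and hence $\prho_{t,A}\to0$.

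Conversely, suppose $\prho_{t,A}(\bar a,\bar b_n)\to0$, with $a$ and $b_n$ normalized by $s$. Choose scalars $\lambda_n$ so that $\|\log(\lambda_n U_{a^{-t/2}}b_n^t)\|\to0$; this is possible because $\|[y]\|_v=2\inf_\lambda\|y-\lambda e\|$. Then $\lambda_n b_n^t\to a^t$ in norm, so $\lambda_n^{1/t}b_n\to a$. Applying the continuous map $[\log\,\cdot\,]$ gives $[\log b_n]\to[\log a]$.

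This converse direction is the main obstacle. The care needed is in handling the free scalar, i.e.\ in working on rays, and the transfer from the quotient-norm infimum to an actual convergent representative handles it. After that, both directions reduce to norm continuity of $U_x$, $\log$, $\exp$ and powers, as in the proof of Proposition~\ref{prop:rho-topology}.

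\textbf{Step 4.} For contractibility, $\mathbb P\Omega$ is homeomorphic via $\Phi_A$ to the normed space $[A]$, which is contractible. Explicitly, $H(\bar a,s)=(1-s)\potimes\bar a$ is jointly continuous, because it corresponds to $([x],s)\mapsto(1-s)[x]$. Alternatively, the normalized affine section $\{a\in\Omega:\tau(a)=1\}$ for a state $\tau$ is convex and maps homeomorphically onto $\mathbb P\Omega$. Either route gives contractibility, and hence simple connectedness.
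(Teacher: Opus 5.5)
Your proof is correct, but it takes a different route from the paper. The paper fixes a state $\omega$ and works on the convex section $\Sigma_\omega=\{a\in\Omega:\omega(a)=1\}$. There it proves, by order-interval estimates with $M(a_n/a)$ and $M(a/a_n)$, that the Hilbert-metric topology agrees with the norm topology. From this it deduces joint continuity of $(r,a)\mapsto a^r/\omega(a^r)$, which settles the case $t=1$. It then passes to general $t$ by observing that $F_t^{\mathrm p}$ is, up to the factor $t$, an isometry onto $(\mathbb P\Omega,\dH)$ that commutes with scalar multiplication.

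You instead handle all $t>0$ at once, directly from $\prho_{t,A}(\bar a,\bar b)=\frac1t\|[\log U_{a^{-t/2}}b^t]\|_v$. The free positive scalar is absorbed through $\|[y]\|_v=2\inf_\lambda\|y-\lambda e\|$. Your normalization of $a$ and $b_n$ by $s$ is not actually needed in the converse direction, and the step $\lambda_n b_n^t\to a^t$ uses $U_{a^{t/2}}U_{a^{-t/2}}=I$. What you obtain is a stronger structural fact: the coordinate map $\Phi_A$ is itself a homeomorphism onto $([A],\|\cdot\|_v)$, and it conjugates $S_r$ to multiplication by $r$. The general definition does not require this, since $\phi$ need not be a homeomorphism. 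With it, fixed-scalar continuity, joint continuity, and contractibility via the linear contraction of $[A]$ all become immediate, with no states or $M(\cdot/\cdot)$ estimates.

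The paper's route, in exchange, produces an explicit convex affine model of $\mathbb P\Omega$. That model is also a ready-made setting for checking norm continuity of projective gyroaddition, which Proposition~\ref{prop:CKG} needs. Your argument would have to add that point separately, though it is a short one.
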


\begin{proof}
We first treat the Hilbert topology.  Fix a state $\omega$ on $A$ and put
\[
 \Sigma_\omega=\{a\in\Omega:\omega(a)=1\}.
\]
For $a\in\Omega$, there is an $\varepsilon>0$ such that $a\ge\varepsilon e$,
so $\omega(a)\ge\varepsilon\omega(e)=\varepsilon>0$.  The equation
$\omega(\lambda a)=1$ for $\lambda>0$ therefore has the unique solution
$\lambda=1/\omega(a)$.  Thus each positive ray meets $\Sigma_\omega$ in
exactly one point, so
\[
 s_\omega(\bar a)=\frac{a}{\omega(a)}
\]
is a bijection from $\mathbb P\Omega$ onto $\Sigma_\omega$.  We claim that it
identifies the Hilbert metric topology with the norm topology.

Suppose $a_n,a\in\Sigma_\omega$ and $a_n\to a$ in norm.  Choose $m>0$ with
$a\ge me$ and put $\delta_n=\|a_n-a\|/m$.  For all large $n$,
\[
 (1-\delta_n)a\le a_n\le(1+\delta_n)a,
\]
and hence
\[
 \dH(\bar a_n,\bar a)
 \le\log\frac{1+\delta_n}{1-\delta_n}\longrightarrow0.
\]
Conversely, suppose $\dH(\bar a_n,\bar a)\to0$.  Set
\[
 M_n=M(a_n/a),\qquad
 m_n=M(a/a_n)^{-1}.
\]
Then $m_na\le a_n\le M_na$.  Applying $\omega$ gives
$m_n\le1\le M_n$, while
\[
 \log(M_n/m_n)=\dH(\bar a_n,\bar a)\longrightarrow0.
\]
Since
\[
 1\le M_n\le M_n/m_n,\qquad
 1\le m_n^{-1}\le M_n/m_n\longrightarrow1,
\]
we have $m_n,M_n\to1$, and the order-unit norm yields
\[
 \|a_n-a\|
 \le\max\{1-m_n,M_n-1\}\,\|a\|\longrightarrow0.
\]
This proves the claim.

On the section $\Sigma_\omega$, projective scalar multiplication is represented
by
\begin{equation}\label{eq:section-scalar}
 (r,a)\longmapsto\frac{a^r}{\omega(a^r)}.
\end{equation}
The map $(r,a)\mapsto a^r=\exp(r\log a)$ is jointly norm continuous and the
denominator in \eqref{eq:section-scalar} is strictly positive and continuous.
Hence scalar multiplication is jointly continuous for the Hilbert topology.

For general $t$, the power map
\[
 F_t^{\mathrm p}(\bar a)=\overline{a^t}
\]
is, by Proposition~\ref{prop:prho-Hilbert}, an isometry up to the factor $t$
from $(\mathbb P\Omega,\prho_{t,A})$ onto
$(\mathbb P\Omega,\dH)$, and it commutes with scalar multiplication.  Thus the projective GGV is a standard-norm GGV for every $t>0$.

Contractibility follows already from Corollary~\ref{cor:contractible}.  It may
also be seen directly: $\Sigma_\omega$ is convex, since the positive invertible
cone is convex and $\omega$ is affine.  Thus $\mathbb P\Omega$ is homeomorphic
to a convex set.  This section also verifies the continuity hypotheses used
in Proposition~\ref{prop:CKG}: on $\Sigma_\omega$, projective gyroaddition is
represented by the norm-continuous map
\[
 (a,b)\longmapsto
 \frac{(U_{a^{t/2}}b^t)^{1/t}}
 {\omega((U_{a^{t/2}}b^t)^{1/t})},
\]
and $a\mapsto[\log a]$ is continuous for the variation norm, which is twice
the quotient norm.  The normalized power map and its inverse are norm
continuous on $\Sigma_\omega$, so the same conclusions hold for every $t>0$.
\end{proof}

\begin{proposition}[Power conjugacy for the projective family]
\label{prop:projective-power-conjugacy}
Let $A,B$ be unital JB-algebras, neither equal to $\mathbb Re$, and let $t>0$.
For a map
$f:\mathbb P A_+^\circ\to\mathbb P B_+^\circ$, define
\[
 H=F_{t,B}^{\mathrm p}\circ f\circ(F_{t,A}^{\mathrm p})^{-1}.
\]
Then $f$ is a bijective $\prho_t$-isometry if and only if $H$ is a bijective
Hilbert isometry.
\end{proposition}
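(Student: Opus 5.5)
The plan is to mirror the proof of Proposition~\ref{prop:power-conjugacy}, with Proposition~\ref{prop:prho-Hilbert} in place of Proposition~\ref{prop:rho-Thompson}.

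First I will check that the projective power map $F_{t,A}^{\mathrm p}(\bar a)=\overline{a^t}$ is well defined and bijective on $\mathbb P A_+^\circ$. It is well defined because $(\alpha a)^t=\alpha^t a^t$. Its inverse is $F_{1/t,A}^{\mathrm p}$, since $(a^t)^{1/t}=a$ by one-variable functional calculus and this identity passes to rays. The same holds for $B$.

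Next, fix $x=\overline{a^t}$ and $y=\overline{b^t}$, so that $(F^{\mathrm p}_{t,A})^{-1}(x)=\bar a$ and $(F^{\mathrm p}_{t,A})^{-1}(y)=\bar b$. Write $f(\bar a)=\bar c$ and $f(\bar b)=\bar d$; then $H(x)=\overline{c^t}$ and $H(y)=\overline{d^t}$. Proposition~\ref{prop:prho-Hilbert}, applied in $A$ and in $B$, gives
\[
 \dH(H(x),H(y))=t\,\prho_{t,B}(f(\bar a),f(\bar b)),
 \qquad
 \dH(x,y)=t\,\prho_{t,A}(\bar a,\bar b).
\]
Since $F^{\mathrm p}_{t,A}$ is surjective, every pair $x,y$ of rays arises this way. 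Hence $H$ preserves $\dH$ exactly when $f$ preserves $\prho_t$. Bijectivity of $H$ is equivalent to bijectivity of $f$, because $H$ is obtained from $f$ by composing with bijections on both sides.

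There is no genuine obstacle here. The one point to handle with care is that the identities must be applied to rays, not to representatives; this is legitimate because Proposition~\ref{prop:prho-Hilbert} is already stated on rays with arbitrary representatives. The hypothesis $A,B\ne\mathbb Re$ is needed only so that the projective GGVs of Theorem~\ref{thm:projective-JB-GGV} are defined. Hilbert's metric $\dH$ itself makes sense regardless.
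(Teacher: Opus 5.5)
Your proposal is correct and follows the paper's approach. The paper proves this in one line, as immediate from Proposition~\ref{prop:prho-Hilbert} and the bijectivity of the projective power maps. You have written out the same details: the well-definedness of $F^{\mathrm p}_{t}$ with inverse $F^{\mathrm p}_{1/t}$, the two distance identities $\dH=t\,\prho_t$, and surjectivity of $F^{\mathrm p}_{t,A}$ to match the quantifiers over pairs.
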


\begin{proof}
This is immediate from Proposition~\ref{prop:prho-Hilbert} and bijectivity of
the projective power maps.
\end{proof}

\begin{theorem}[Global projective classification]
\label{thm:projective-global-classification}
Let $A,B$ be unital JB-algebras, neither equal to $\mathbb Re$, and let $t>0$.
A map
\[
 f:\mathbb P A_+^\circ\longrightarrow\mathbb P B_+^\circ
\]
is a bijective $\prho_t$-isometry if and only if there exist a Jordan
isomorphism $J:A\to B$, a sign $\varepsilon\in\{-1,1\}$, and
$c\in B_+^\circ$ such that
\begin{equation}\label{eq:projective-global-form}
 f(\bar a)=
 \overline{
 \left[U_{c^{t/2}}J(a^{\varepsilon t})\right]^{1/t}}
 \qquad(a\in A_+^\circ).
\end{equation}
The ray $\bar c$ is $f(\bar e)$; changing the representative $c$ does not
change the right-hand side.  The representation parameters need not be unique:
for example, on $A=\mathbb R^2$ the projective inversion is also induced by
the Jordan automorphism that interchanges the two coordinates.  All uniqueness
claims about local extensions below concern the resulting map.
\end{theorem}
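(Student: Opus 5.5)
The plan mirrors the affine case, Theorem~\ref{thm:global-classification}. The difference is that I reduce to a known classification of Hilbert isometries through the power conjugacy, rather than reproving it.

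For sufficiency, assume \eqref{eq:projective-global-form} holds and conjugate by the projective power maps. Set $H=F_{t,B}^{\mathrm p}\circ f\circ(F_{t,A}^{\mathrm p})^{-1}$. Writing $x=a^t$ gives
\[
 H(\bar x)=\overline{U_{c^{t/2}}J(x^{\varepsilon})},
\]
and this is well defined on rays. For $\varepsilon=1$, the map $\bar x\mapsto\overline{J(x)}$ is a Hilbert isometry, because Jordan isomorphisms preserve order and hence $M(\cdot/\cdot)$. For $\varepsilon=-1$, inversion is a Hilbert isometry because $a\le\lambda b$ iff $b^{-1}\le\lambda a^{-1}$ on the positive invertible cone; in the JB setting this is the standard antitonicity of inversion. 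The map $U_{c^{t/2}}$ is an order automorphism of $B_+^\circ$, so it is also a Hilbert isometry. Each factor is bijective on ray spaces. Proposition~\ref{prop:projective-power-conjugacy} then gives that $f$ is a bijective $\prho_t$-isometry.

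For necessity, let $f$ be a bijective $\prho_t$-isometry. By Proposition~\ref{prop:projective-power-conjugacy}, $H$ is a bijective Hilbert isometry from $\mathbb P A_+^\circ$ onto $\mathbb P B_+^\circ$. Here I would invoke the Roelands--Wortel classification \cite{RW}: such an $H$ has the form
\[
 H(\bar x)=\overline{U_{h^{1/2}}J(x^{\varepsilon})}
\]
for a Jordan isomorphism $J$, a sign $\varepsilon$, and $h\in B_+^\circ$ with $\bar h=H(\bar e)$. Put $c=h^{1/t}$. Substituting $x=a^t$ and using $J(a^t)^{\varepsilon}=J(a^{\varepsilon t})$, which holds because $J$ commutes with functional calculus, I get $H(\overline{a^t})=\overline{U_{c^{t/2}}J(a^{\varepsilon t})}$. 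Undoing $F_{t,B}^{\mathrm p}$ means taking positive $t$-th roots, which is well defined on rays, and this yields \eqref{eq:projective-global-form}. Setting $a=e$ shows $f(\bar e)=\overline{(U_{c^{t/2}}e)^{1/t}}=\overline{c}$. Rescaling $c\mapsto\lambda c$ multiplies the bracket by $\lambda^t$ and the root by $\lambda$, so the ray is unchanged.

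The main obstacle is matching the exact form of the Roelands--Wortel theorem. Their result may be stated with a central projection mixing $J(x)$ and $J(x^{-1})$. If so, I would argue that on projective cones a mixed map with nontrivial $p$ and $p^\perp$ is not a Hilbert isometry unless $p\in\{0,e\}$, which leaves a single global sign $\varepsilon$. The hypothesis $A\ne\mathbb Re$ is needed to apply \cite{RW}.

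As an alternative route, I could mimic the affine proof directly. Normalize $g(\bar a)=\overline{c}^{-1}\poplus_t f(\bar a)$ and linearize via $S([x])=\Phi_B(g(\overline{e^x}))$. Mazur--Ulam makes $S$ a linear isometry of $([A],\|\cdot\|_v)$. The remaining step would be classifying variation-norm isometries as $\pm[J(\cdot)]$; this is again essentially the content of \cite{RW}, so citing it is the efficient choice.

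The final non-uniqueness remark follows from the $\mathbb R^2$ example. There, inversion $\overline{(x_1,x_2)}\mapsto\overline{(x_1^{-1},x_2^{-1})}=\overline{(x_2,x_1)}$ coincides with the coordinate swap.
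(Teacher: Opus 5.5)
Your proposal is correct and follows the paper's proof: conjugate $f$ by the projective power maps via Proposition~\ref{prop:projective-power-conjugacy}, invoke the Roelands--Wortel classification \cite[Theorem~5.1]{RW}, and rescale the representative of $H(\bar e)$ to obtain $c$. That classification already has a single global sign, so your central-projection contingency is not needed. The only minor difference is that you prove sufficiency directly, by checking that Jordan isomorphisms, inversion and $U_{c^{t/2}}$ are Hilbert isometries, whereas the paper cites the converse half of the same classification.
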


\begin{proof}
By Proposition~\ref{prop:projective-power-conjugacy}, $f$ is conjugate through
the power maps to a bijective Hilbert isometry $H$.  By the Hilbert isometry
classification of Roelands and Wortel \cite[Theorem~5.1]{RW},
\[
 H(\bar x)=\overline{U_bJ(x^\varepsilon)}
\]
for a Jordan isomorphism $J$, a sign $\varepsilon$, and $b\in B_+^\circ$.
Since
$H(\bar e)=F_{t,B}^{\mathrm p}(f(\bar e))$, we may choose
$b=c^{t/2}$ for a representative $c$ of $f(\bar e)$, up to a harmless positive
scalar.  Taking $x=a^t$ and conjugating back gives
\eqref{eq:projective-global-form}.  The converse follows from the same global
Hilbert classification and Proposition~\ref{prop:projective-power-conjugacy}.
\end{proof}

\begin{corollary}[Hilbert isometries]\label{cor:Hilbert-global}
A map
$f:\mathbb P A_+^\circ\to\mathbb P B_+^\circ$ is a bijective Hilbert isometry
if and only if
\[
 f(\bar a)=\overline{U_bJ(a^\varepsilon)}
\]
for some Jordan isomorphism $J:A\to B$, some
$\varepsilon\in\{-1,1\}$, and some $b\in B_+^\circ$.
\end{corollary}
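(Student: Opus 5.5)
The plan is to specialize Theorem~\ref{thm:projective-global-classification} to $t=1$, in the same way Corollary~\ref{cor:Thompson-global} was derived from Theorem~\ref{thm:global-classification}. The corollary implicitly assumes $A,B\ne\mathbb Re$, as in the preceding theorem; I will state this at the start.

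First, I record the projective analogue of Corollary~\ref{cor:rho1-Thompson}. Proposition~\ref{prop:prho-Hilbert} with $t=1$ gives
\[
 \prho_{1,A}(\bar a,\bar b)=\dH(\bar a,\bar b)
\]
for all $a,b\in A_+^\circ$. Hence bijective Hilbert isometries are exactly the bijective $\prho_1$-isometries. Equivalently, the projective power maps $F^{\mathrm p}_{1}$ are the identity, so Proposition~\ref{prop:projective-power-conjugacy} gives the same conclusion trivially.

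Second, I apply Theorem~\ref{thm:projective-global-classification} with $t=1$. It says $f$ is a bijective $\prho_1$-isometry if and only if $f(\bar a)=\overline{U_{c^{1/2}}J(a^{\varepsilon})}$ for some Jordan isomorphism $J$, sign $\varepsilon$, and $c\in B_+^\circ$.

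To match the stated form, I set $b=c^{1/2}\in B_+^\circ$. Conversely, given any $b\in B_+^\circ$, I set $c=b^2$. Then $c^{1/2}=b$ because the positive square root is unique under functional calculus. So the two parametrizations describe the same family of maps, and both directions of the equivalence follow.

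The only delicate point is the direction "if". There I must allow an arbitrary $b\in B_+^\circ$, which need not be a representative of $f(\bar e)$. But $f(\bar e)=\overline{U_bJ(e)}=\overline{b^2}$, so $c=b^2$ is indeed a representative of $f(\bar e)$. This is consistent with the theorem's remark that the choice of representative does not change the map. Beyond this, I expect no real obstacle; this direction is in any case just \cite[Theorem~5.1]{RW}.
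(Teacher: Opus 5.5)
Your proposal is correct and matches the paper's proof, which sets $t=1$ in Theorem~\ref{thm:projective-global-classification} and writes $b=c^{1/2}$. Your additional checks are accurate details that the paper leaves implicit: that $\prho_{1}=\dH$, that $c=b^2$ represents $f(\bar e)$ and allows an arbitrary $b\in B_+^\circ$, and that the standing assumption $A,B\ne\mathbb Re$ is needed.
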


\begin{proof}
Take $t=1$ in Theorem~\ref{thm:projective-global-classification}, writing
$b=c^{1/2}$.  This agrees with \cite[Theorem~5.1]{RW}.
\end{proof}

\begin{example}[Full-rank density matrices]\label{ex:density-matrices}
Let $n\ge2$, let $A=M_n(\mathbb C)_{\sa}$ with its usual JB-algebra structure,
and let
\[
 \mathcal D_n^\circ
 =\{a\in M_n(\mathbb C):a>0,\ \operatorname{Tr}a=1\}.
\]
Every positive ray has a unique trace-one representative, so
$\mathcal D_n^\circ$ is a concrete section of $\mathbb P A_+^\circ$.  The
projective operations become
\[
 a\poplus_t b=
 \frac{(a^{t/2}b^ta^{t/2})^{1/t}}
 {\operatorname{Tr}\bigl((a^{t/2}b^ta^{t/2})^{1/t}\bigr)},
 \qquad
 r\potimes a=\frac{a^r}{\operatorname{Tr}(a^r)}.
\]
The projective logarithmic coordinate may be represented in the traceless
Hermitian space by
\[
 \Psi(a)=\log a-\frac1n\operatorname{Tr}(\log a)I,
\]
with the variation norm
$\|x\|_v=\lambda_{\max}(x)-\lambda_{\min}(x)$.  Thus
$\mathcal D_n^\circ$ is a standard-norm GGV and its $t=1$
gyrometric is Hilbert's projective metric.  More generally,
trace-normalized interiors of cones in finite-dimensional Euclidean Jordan
algebras give the same type of projective realization.
\end{example}

\begin{example}[Euclidean Jordan state spaces and the Albert cone]
\label{ex:EJA-states}
Let $A$ be a finite-dimensional Euclidean Jordan algebra of rank at least two,
with its canonical trace $\operatorname{tr}$.  The normalized state space
\[
 \mathcal S_A^\circ
 =\{a\in A_+^\circ:\operatorname{tr}a=1\}
\]
meets each positive ray exactly once and therefore realizes
$\mathbb P A_+^\circ$.  If
\[
 q_t(a,b)=\bigl(U_{a^{t/2}}b^t\bigr)^{1/t},
\]
the induced operations are
\[
 a\poplus_t b=\frac{q_t(a,b)}{\operatorname{tr}q_t(a,b)},
 \qquad
 r\potimes a=\frac{a^r}{\operatorname{tr}(a^r)}.
\]
Consequently $\mathcal S_A^\circ$ is a standard-norm GGV carrying
the projective gyrometric of Theorem~\ref{thm:projective-JB-GGV}.  This
covers real, complex, and quaternionic positive definite matrix cones, spin
factors, and also the exceptional Albert algebra $H_3(\mathbb O)$; see
\cite{McCrimmon}.  The last example shows that the construction is not confined
to special Jordan algebras arising from associative operator algebras.
\end{example}

\section{Local-to-global theorems for affine and projective JB-cones}

Let $A,B$ be unital JB-algebras and fix $t>0$.

\subsection{The affine cone and Thompson geometry}

\begin{theorem}[Affine Mankiewicz-type extension theorem]
\label{thm:JB-local-global}
Let $U\subset A_+^\circ$ and $V\subset B_+^\circ$ be nonempty connected open
subsets.  If
\[
 T:U\longrightarrow V
\]
is a bijective $\rho_t$-isometry, then $T$ extends uniquely to a bijective
$\rho_t$-isometry
\[
 \widetilde T:A_+^\circ\longrightarrow B_+^\circ.
\]
\end{theorem}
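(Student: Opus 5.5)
The plan is to deduce this directly from the abstract Theorem~\ref{thm:general-local-global}, applied with $G_1=(A_+^\circ,\oplus_t,\otimes,\phi)$ and $G_2=(B_+^\circ,\oplus_t,\otimes,\phi)$. The only hypotheses to check are that both are standard-norm GGVs, and that $U,V$ are nonempty connected open subsets in the gyrometric topology.

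The first hypothesis is exactly Corollary~\ref{cor:JB-continuous-standard}. The real value-line structure comes from Theorem~\ref{thm:JB-GGV}. The continuity of every fixed scalar map $S_r(a)=a^r$ in the $\rho_t$-topology follows from joint norm continuity of $(r,a)\mapsto\exp(r\log a)$ together with Proposition~\ref{prop:rho-topology}.

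For the second hypothesis, the statement of the theorem leaves implicit in which topology ``open'' and ``connected'' are meant. By Proposition~\ref{prop:rho-topology}, the $\rho_{t}$-topology on $A_+^\circ$ and on $B_+^\circ$ coincides with the norm topology. Hence $U$ and $V$ are open and connected in either sense, and the two readings agree.

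Theorem~\ref{thm:general-local-global} then gives a unique bijective gyrometric isometry $\widetilde T:A_+^\circ\to B_+^\circ$ extending $T$. Since the gyrometrics are $\rho_{t,A}$ and $\rho_{t,B}$, this is precisely a bijective $\rho_t$-isometry. Uniqueness is the uniqueness clause of that theorem: two extensions agree on the nonempty open set $U$ and hence everywhere by Lemma~\ref{lem:identity-principle}.

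I expect no genuine obstacle. The only point needing care is the topology identification in Proposition~\ref{prop:rho-topology}, which is needed so that norm-open connected sets are admissible inputs for the abstract theorem.

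If desired, I would add a remark combining the result with Theorem~\ref{thm:global-classification}: $T$ must then be the restriction of a map of the form \eqref{eq:global-form}.
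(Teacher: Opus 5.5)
Your proposal is correct and is essentially the paper's own proof: both affine cones are standard-norm GGVs by Theorem~\ref{thm:JB-GGV} and Corollary~\ref{cor:JB-continuous-standard}, so Theorem~\ref{thm:general-local-global} applies directly. Your explicit use of Proposition~\ref{prop:rho-topology}, to see that norm-open connected sets coincide with $\rho_t$-open connected sets, is a useful clarification that the paper leaves implicit.
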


\begin{proof}
By Theorem~\ref{thm:JB-GGV} and
Corollary~\ref{cor:JB-continuous-standard}, both affine cones are
standard-norm GGVs.  Theorem~\ref{thm:general-local-global} applies.
\end{proof}

\begin{corollary}[Affine local classification]
\label{cor:JB-local-classification}
Let $U\subset A_+^\circ$ and $V\subset B_+^\circ$ be nonempty connected open
subsets.  A map $T:U\to V$ is a bijective $\rho_t$-isometry if and only if
there exist a Jordan isomorphism $J:A\to B$, a central projection $p\in B$,
and $c\in B_+^\circ$ such that
\begin{equation}\label{eq:local-form}
 T(a)=
 \left[
 U_{c^{t/2}}
 \bigl(pJ(a^t)+p^\perp J(a^{-t})\bigr)
 \right]^{1/t},
 \qquad a\in U,
\end{equation}
and the global map defined by the right-hand side maps $U$ onto $V$.
In this case the same formula on all of $A_+^\circ$ is the unique global
extension.
\end{corollary}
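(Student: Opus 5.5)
The plan combines Theorem~\ref{thm:JB-local-global} with the global classification Theorem~\ref{thm:global-classification}, treating the two implications separately.

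\emph{Necessity.} Let $T:U\to V$ be a bijective $\rho_t$-isometry. By Theorem~\ref{thm:JB-local-global} it extends uniquely to a bijective $\rho_t$-isometry $\widetilde T:A_+^\circ\to B_+^\circ$. Theorem~\ref{thm:global-classification}, applied to $\widetilde T$, gives a Jordan isomorphism $J$ and a central projection $p$ such that \eqref{eq:global-form} holds for $\widetilde T$. Put $c=\widetilde T(e)\in B_+^\circ$; restricting \eqref{eq:global-form} to $a\in U$ then gives \eqref{eq:local-form}. The global map defined by the right-hand side is $\widetilde T$, and it maps $U$ onto $T(U)=V$. One subtlety is that $e$ need not lie in $U$, so $c$ cannot be read off from $T$ alone. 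This is the reason the statement lets $c$ be an arbitrary element of $B_+^\circ$ rather than fixing $c=T(e)$.

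\emph{Sufficiency.} Suppose $J$, $p$ and $c$ are given, and let $f:A_+^\circ\to B_+^\circ$ be the map defined by the right-hand side of \eqref{eq:local-form}, so that $T=f|_U$ and $f(U)=V$. The key step is to check that $f$ is a bijective $\rho_t$-isometry; the restriction to $U$ is then one as well. Since the converse part of Theorem~\ref{thm:global-classification} is stated with $f(e)$ in place of $c$, we compute $f(e)$ directly. Because $J(e)=e$ and $p+p^\perp=e$, we get $f(e)^t=U_{c^{t/2}}e=c^t$, hence $f(e)=c$. The formula therefore coincides with \eqref{eq:global-form}, and the converse part applies. Alternatively, following the converse proof, $H(x)=f(x^{1/t})^t=U_{c^{t/2}}(pJ(x)+p^\perp J(x^{-1}))$ is a Thompson isometry by \cite[Theorem~3.2]{LRW}, and Proposition~\ref{prop:power-conjugacy} finishes the argument.

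\emph{Uniqueness.} Any global extension of $T$ is a bijective $\rho_t$-isometry agreeing with $f$ on the nonempty open set $U$. The uniqueness clause of Theorem~\ref{thm:general-local-global}, or equivalently the identity principle Lemma~\ref{lem:identity-principle} on the connected space $A_+^\circ$, then shows that this extension equals $f$. I expect no real obstacle here; the only point needing care is the identification $f(e)=c$ in the sufficiency step.
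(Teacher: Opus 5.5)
Your proposal is correct and follows the paper's own route: extend via Theorem~\ref{thm:JB-local-global}, apply Theorem~\ref{thm:global-classification}, and get the converse by restricting the global isometry. Your check that the right-hand side sends $e$ to $c$ is worth keeping. It is what lets the converse part of the global classification, which is stated with $f(e)$, apply to an arbitrary $c\in B_+^\circ$, a detail the paper leaves implicit.
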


\begin{proof}
Extend by Theorem~\ref{thm:JB-local-global} and apply
Theorem~\ref{thm:global-classification}.  The converse is obtained by
restriction.
\end{proof}

\begin{corollary}[Local-to-global theorem for Thompson's metric]
\label{cor:Thompson-local}
Every bijective Thompson isometry between nonempty connected open subsets of
$A_+^\circ$ and $B_+^\circ$ extends uniquely to a global bijective Thompson
isometry.  Moreover, it is the restriction of a unique global map
\[
 a\longmapsto U_{c^{1/2}}
 \bigl(pJ(a)+p^\perp J(a^{-1})\bigr).
\]
\end{corollary}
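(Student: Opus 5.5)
The corollary is the specialization $t=1$ of Theorem~\ref{thm:JB-local-global} and Corollary~\ref{cor:JB-local-classification}, so the plan is to reduce to those results through Corollary~\ref{cor:rho1-Thompson}.

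\emph{Step 1: translate the hypothesis.} Let $U\subset A_+^\circ$ and $V\subset B_+^\circ$ be nonempty connected open sets, and let $T:U\to V$ be a bijective Thompson isometry. By Corollary~\ref{cor:rho1-Thompson}, $\dT=\rho_{1,A}$ on $A_+^\circ$ and $\dT=\rho_{1,B}$ on $B_+^\circ$. Hence $T$ is a bijective $\rho_1$-isometry between the same sets.

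\emph{Step 2: extend and classify.} Openness and connectedness are taken in the norm topology. By Proposition~\ref{prop:rho-topology} this agrees with the $\rho_1$-topology, so the topological hypotheses of Theorem~\ref{thm:JB-local-global} hold verbatim. That theorem, with $t=1$, yields a unique bijective $\rho_1$-isometry $\widetilde T:A_+^\circ\to B_+^\circ$ extending $T$, and $\widetilde T$ is therefore a global bijective Thompson isometry. Applying Theorem~\ref{thm:global-classification} (equivalently Corollary~\ref{cor:Thompson-global}) to $\widetilde T$ gives a Jordan isomorphism $J:A\to B$ and a central projection $p\in B$ with
\[
 \widetilde T(a)=U_{c^{1/2}}\bigl(pJ(a)+p^\perp J(a^{-1})\bigr),\qquad c=\widetilde T(e).
\]
This is exactly the $t=1$ case of \eqref{eq:local-form}. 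Note that $c$ is determined by $\widetilde T$ and need not lie in $U$ or $V$.

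\emph{Step 3: uniqueness.} Suppose two global maps of the displayed form both restrict to $T$ on $U$. Each is a bijective Thompson isometry by Corollary~\ref{cor:Thompson-global}, and hence each is a local gyrometric isometry. They agree on the nonempty open set $U$, and $A_+^\circ$ is connected, so by Lemma~\ref{lem:identity-principle} they coincide as maps. Uniqueness of the extension itself was already supplied by Theorem~\ref{thm:JB-local-global}.

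\emph{Main obstacle.} There is no substantive difficulty here. The only points that need care are that the topologies match (Proposition~\ref{prop:rho-topology}) and that the uniqueness asserted is uniqueness of the map, not of the parameters $(J,p,c)$.
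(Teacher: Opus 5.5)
Your proposal is correct and follows essentially the paper's route: you identify $\rho_1$ with $\dT$ via Corollary~\ref{cor:rho1-Thompson}, apply Theorem~\ref{thm:JB-local-global} at $t=1$, and read off the form from the classification. The paper simply cites Corollary~\ref{cor:JB-local-classification} where you unpack it into Theorem~\ref{thm:global-classification} and Corollary~\ref{cor:Thompson-global}; your Step~3 is fine, since any map of the displayed form sends $e$ to $c$ because $J(e)=e$, though the uniqueness part of Theorem~\ref{thm:JB-local-global} already suffices.
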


\begin{proof}
Use $\rho_{1,A}=\dT$, Theorem~\ref{thm:JB-local-global}, and
Corollary~\ref{cor:JB-local-classification}.
\end{proof}

\begin{remark}
When $A$ and $B$ are unital $C^*$-algebras,
Corollary~\ref{cor:JB-local-classification} recovers the local
classification and extension theorem of Hatori
\cite[Theorem~8]{HatoriExtension}.  The present result places that
phenomenon in the standard-norm GGV framework and extends it to
unital JB-algebras.
\end{remark}

\subsection{The projective cone and Hilbert geometry}

Assume in this subsection that neither $A$ nor $B$ is one-dimensional.

\begin{theorem}[Projective Mankiewicz-type extension theorem]
\label{thm:projective-local-global}
Let $U\subset\mathbb P A_+^\circ$ and
$V\subset\mathbb P B_+^\circ$ be nonempty connected open subsets.  If
\[
 T:U\longrightarrow V
\]
is a bijective $\prho_t$-isometry, then it extends uniquely to a bijective
$\prho_t$-isometry
\[
 \widetilde T:\mathbb P A_+^\circ\longrightarrow\mathbb P B_+^\circ.
\]
\end{theorem}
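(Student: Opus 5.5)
The plan is to deduce this directly from the abstract Theorem~\ref{thm:general-local-global}, exactly as in the affine case (Theorem~\ref{thm:JB-local-global}). That theorem needs only two hypotheses: both spaces are standard-norm GGVs, and $U,V$ are nonempty connected open subsets for the gyrometric topology. So the proof reduces to checking that $\bigl(\mathbb P A_+^\circ,\poplus_t,\potimes,\Phi_A\bigr)$ and the corresponding structure on $\mathbb P B_+^\circ$ meet these hypotheses.

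\emph{Step 1.} Since $A$ and $B$ are not one-dimensional, we have $A\neq\mathbb Re$ and $B\neq\mathbb Re$. Hence Theorem~\ref{thm:projective-JB-GGV} applies and gives a GGV with real value-line structure on each projective cone.

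\emph{Step 2.} Proposition~\ref{prop:projective-continuity} upgrades each of these to a standard-norm GGV. In particular, each is contractible and simply connected, which is exactly what the monodromy step inside Theorem~\ref{thm:general-local-global} uses.

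\emph{Step 3.} The metric in the hypothesis is $\prho_t$, which by construction is the gyrometric of this GGV. Openness and connectedness of $U$ and $V$ are therefore understood in the $\prho_t$-topology, which is the topology used in the abstract theorem. If one wants these notions in the Hilbert or quotient topology instead, Proposition~\ref{prop:prho-Hilbert} and the proof of Proposition~\ref{prop:projective-continuity} show that the two topologies agree, so nothing changes.

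\emph{Step 4.} Theorem~\ref{thm:general-local-global} now yields a bijective gyrometric isometry $\widetilde T:\mathbb P A_+^\circ\to\mathbb P B_+^\circ$ extending $T$. Uniqueness also comes from that theorem, through the identity principle (Lemma~\ref{lem:identity-principle}).

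I expect no genuine obstacle here; the substantive work was already done in Section~2 and in Proposition~\ref{prop:projective-continuity}. The one point to state carefully is the nondegeneracy assumption. It is what makes the variation norm nontrivial, so that the norm-value set is all of $\mathbb R$ and (GGVV) holds. It is also what makes Theorem~\ref{thm:projective-JB-GGV} applicable at all. I will point out explicitly that the assumption of this subsection supplies it.
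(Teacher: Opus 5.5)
Your proposal is correct and follows the paper's proof: Theorem~\ref{thm:projective-JB-GGV} and Proposition~\ref{prop:projective-continuity} make both projective cones standard-norm GGVs (using $A,B\neq\mathbb Re$), and Theorem~\ref{thm:general-local-global} then gives the unique bijective $\prho_t$-isometric extension. Your added remarks on the nondegeneracy hypothesis, and on the $\prho_t$-, Hilbert and norm topologies agreeing, are accurate supplementary detail.
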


\begin{proof}
Both projective cones are standard-norm GGVs by
Theorem~\ref{thm:projective-JB-GGV} and
Proposition~\ref{prop:projective-continuity}.  Now apply
Theorem~\ref{thm:general-local-global}.
\end{proof}

\begin{corollary}[Projective local classification]
\label{cor:projective-local-classification}
Let $U\subset\mathbb P A_+^\circ$ and
$V\subset\mathbb P B_+^\circ$ be nonempty connected open subsets.  A map
$T:U\to V$ is a bijective $\prho_t$-isometry if and only if there exist a
Jordan isomorphism $J:A\to B$, a sign $\varepsilon\in\{-1,1\}$, and
$c\in B_+^\circ$ such that
\begin{equation}\label{eq:projective-local-form}
 T(\bar a)=
 \overline{
 \left[U_{c^{t/2}}J(a^{\varepsilon t})\right]^{1/t}}
 \qquad(\bar a\in U),
\end{equation}
and the global map defined by the right-hand side maps $U$ onto $V$.
In that case the same formula on the whole projective cone is the unique global
extension.
\end{corollary}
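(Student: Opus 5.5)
The argument mirrors the proof of Corollary~\ref{cor:JB-local-classification}, with the affine ingredients replaced by their projective counterparts: Theorem~\ref{thm:projective-local-global} for the extension step and Theorem~\ref{thm:projective-global-classification} for the classification step.

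\emph{Necessity.} Suppose $T:U\to V$ is a bijective $\prho_t$-isometry. By Theorem~\ref{thm:projective-local-global} it has a unique extension to a bijective $\prho_t$-isometry
\[
\widetilde T:\mathbb P A_+^\circ\to\mathbb P B_+^\circ .
\]
Theorem~\ref{thm:projective-global-classification}, applied to $\widetilde T$, gives a Jordan isomorphism $J$, a sign $\varepsilon$, and $c\in B_+^\circ$ with $\bar c=\widetilde T(\bar e)$ such that \eqref{eq:projective-global-form} holds on all of $\mathbb P A_+^\circ$. Restricting this formula to $U$ yields \eqref{eq:projective-local-form}. The global map defined by the right-hand side is $\widetilde T$, and it maps $U$ onto $T(U)=V$. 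One point needs a remark: $\bar e$ need not lie in $U$. This causes no difficulty, because $c$ is defined through the extension $\widetilde T$ rather than through $T$ itself.

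\emph{Sufficiency.} Suppose $J$, $\varepsilon$, $c$ are given and the global map $f$ defined by the right-hand side of \eqref{eq:projective-local-form} maps $U$ onto $V$. We first check that $f$ is well defined on rays. Replacing $a$ by $\alpha a$ multiplies $J(a^{\varepsilon t})$ by $\alpha^{\varepsilon t}$, so the $t$-th root changes by the positive factor $\alpha^{\varepsilon}$, and the ray is unchanged. The converse direction of Theorem~\ref{thm:projective-global-classification} then shows that $f$ is a bijective $\prho_t$-isometry of the whole projective cone. Its restriction $T=f|_U$ is distance preserving and injective, and it is surjective onto $V$ by hypothesis. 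Hence $T$ is a bijective $\prho_t$-isometry $U\to V$.

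\emph{Uniqueness.} The map $f$ is a global bijective $\prho_t$-isometry extending $T$. By the uniqueness part of Theorem~\ref{thm:projective-local-global} (equivalently, by Lemma~\ref{lem:identity-principle} on the connected space $\mathbb P A_+^\circ$), $f$ is the only such extension. As noted after Theorem~\ref{thm:projective-global-classification}, the parameters $(J,\varepsilon,c)$ themselves need not be unique. The uniqueness assertion therefore concerns only the map, not the representing data.

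There is no genuine obstacle here. The only points requiring care are that the formula descends to rays, that $U$ need not contain $\bar e$, and that uniqueness is stated for maps rather than parameters; each is handled above.
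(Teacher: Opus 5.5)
Your proposal is correct and follows the paper's proof. Necessity extends $T$ by Theorem~\ref{thm:projective-local-global} and classifies the extension by Theorem~\ref{thm:projective-global-classification}; sufficiency and uniqueness restrict the global isometry supplied by the converse of that classification and use the uniqueness in the extension theorem. You spell out details the paper leaves implicit: well-definedness on rays, the possibility $\bar e\notin U$, and the non-uniqueness of the parameters.
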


\begin{proof}
Use Theorem~\ref{thm:projective-local-global} and
Theorem~\ref{thm:projective-global-classification}.  The converse follows by
restriction of the corresponding global isometry.
\end{proof}

\begin{corollary}[Local-to-global theorem for Hilbert's metric]
\label{cor:Hilbert-local}
Every bijective Hilbert isometry between nonempty connected open subsets of
$\mathbb P A_+^\circ$ and $\mathbb P B_+^\circ$ extends uniquely to a global
bijective Hilbert isometry.  Moreover, it is the restriction of a unique
global map of the form
\[
 \bar a\longmapsto\overline{U_bJ(a^\varepsilon)},
\]
where $J:A\to B$ is a Jordan isomorphism,
$\varepsilon\in\{-1,1\}$, and $b\in B_+^\circ$.
\end{corollary}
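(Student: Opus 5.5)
The plan is to specialize the projective results to $t=1$, exactly as Corollary~\ref{cor:Thompson-local} was obtained from the affine results.

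First, I will identify the two metrics. Proposition~\ref{prop:prho-Hilbert} with $t=1$ gives
\[
 \prho_{1,A}(\bar a,\bar b)=\dH(\bar a,\bar b),
\]
and the same identity holds in $B$. Hence a bijective Hilbert isometry $T:U\to V$ between nonempty connected open subsets is exactly a bijective $\prho_1$-isometry. The open sets are open for the gyrometric topology. By Proposition~\ref{prop:projective-continuity} this topology is the Hilbert-metric topology, so no ambiguity arises about which topology "open" refers to.

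Next, I will apply Theorem~\ref{thm:projective-local-global} with $t=1$. The hypotheses needed are that both projective cones are standard-norm GGVs and that $A,B\neq\mathbb Re$; both are in force in this subsection. The theorem gives a unique bijective $\prho_1$-isometry $\widetilde T$ of $\mathbb P A_+^\circ$ onto $\mathbb P B_+^\circ$ extending $T$. Reading the identity $\prho_1=\dH$ back, $\widetilde T$ is a global bijective Hilbert isometry. Uniqueness of the extension as a Hilbert isometry is the same statement, because the two classes of maps coincide. Alternatively, uniqueness follows from the identity principle, Lemma~\ref{lem:identity-principle}, since two extensions agree on the nonempty open set $U$.

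For the form of the extension, I will invoke Corollary~\ref{cor:Hilbert-global}, or equivalently Theorem~\ref{thm:projective-global-classification} at $t=1$ with $b=c^{1/2}$. This writes $\widetilde T(\bar a)=\overline{U_bJ(a^\varepsilon)}$ for a Jordan isomorphism $J$, a sign $\varepsilon$, and some $b\in B_+^\circ$. The word "unique" here refers to the global map, not to the parameters; the remark after Theorem~\ref{thm:projective-global-classification} notes that $(J,\varepsilon,b)$ need not be unique. Uniqueness of the map is the uniqueness of $\widetilde T$ already established.

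I do not expect a genuine obstacle. The only point needing care is this uniqueness phrasing, which I will state as uniqueness of the extension and not of the representing parameters.
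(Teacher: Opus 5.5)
Your proposal is correct and is essentially the paper's proof, which just takes $t=1$ in Theorem~\ref{thm:projective-local-global} and Corollary~\ref{cor:projective-local-classification}; applying Corollary~\ref{cor:Hilbert-global} to the extension instead is the same content, since the local classification is proved exactly that way. One small correction: the gyrometric topology at $t=1$ is the Hilbert-metric topology directly because $\prho_{1}=\dH$ by Proposition~\ref{prop:prho-Hilbert}, not because of Proposition~\ref{prop:projective-continuity}, which instead compares the Hilbert topology with the norm topology on a section.
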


\begin{proof}
Take $t=1$ in Theorem~\ref{thm:projective-local-global} and
Corollary~\ref{cor:projective-local-classification}.
\end{proof}

\begin{corollary}[Full-rank density matrices]\label{cor:density-local}
Let $n\ge2$ and let $U,V$ be nonempty connected open subsets of
$\mathcal D_n^\circ$, equipped with the projective gyrometric transported from
$\mathbb P M_n(\mathbb C)_{++}$.  Every bijective $\prho_t$-isometry
$T:U\to V$ extends uniquely to a global bijective $\prho_t$-isometry of
$\mathcal D_n^\circ$.  In particular, for $t=1$ every bijective Hilbert
isometry between connected open subsets of the full-rank density matrices has
a unique global Hilbert-isometric extension.
\end{corollary}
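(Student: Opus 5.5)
The plan is to reduce the statement to Theorem~\ref{thm:projective-local-global} by transporting everything through the trace-one section. Put $A=M_n(\mathbb C)_{\sa}$. Since $n\ge2$, we have $A\ne\mathbb Re$, so Theorem~\ref{thm:projective-JB-GGV} and Proposition~\ref{prop:projective-continuity} apply. They show that $(\mathbb P A_+^\circ,\poplus_t,\potimes,\Phi_A)$ is a standard-norm GGV.

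The section map $s(\bar a)=a/\operatorname{Tr}a$ is a bijection from $\mathbb P A_+^\circ$ onto $\mathcal D_n^\circ$. This is the state section $\Sigma_\omega$ of Proposition~\ref{prop:projective-continuity} with $\omega=\operatorname{Tr}/n$, rescaled. By Example~\ref{ex:density-matrices}, the operations on $\mathcal D_n^\circ$ are exactly those transported by $s$, and the gyrometric on $\mathcal D_n^\circ$ is by definition $\prho_t\circ(s^{-1}\times s^{-1})$. Proposition~\ref{prop:transport-standard}, applied with $\Theta=s^{-1}$, therefore makes $\mathcal D_n^\circ$ a standard-norm GGV isometric to the projective cone.

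Next we check that the topologies match, so that ``connected open'' means the same thing on both sides. Proposition~\ref{prop:projective-continuity} identifies the Hilbert topology with the norm topology on the section, and the power-map conjugacy of Proposition~\ref{prop:prho-Hilbert} gives the same conclusion for every $t>0$. So $U,V$ are connected and open in $\mathcal D_n^\circ$ for the $\prho_t$-topology exactly when $s^{-1}(U)$ and $s^{-1}(V)$ are connected and open in $\mathbb P A_+^\circ$.

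Given the bijective $\prho_t$-isometry $T:U\to V$, form $T'=s^{-1}\circ T\circ s$ on $s^{-1}(U)$. This is a bijective $\prho_t$-isometry onto $s^{-1}(V)$. Theorem~\ref{thm:projective-local-global} with $B=A$ extends it uniquely to a global bijective $\prho_t$-isometry $\widetilde T'$. Then $\widetilde T=s\circ\widetilde T'\circ s^{-1}$ is the desired extension of $T$. Uniqueness transfers because conjugation by $s$ is a bijection between extensions on the two sides.

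For $t=1$, Proposition~\ref{prop:prho-Hilbert} gives $\prho_{1,A}=\dH$, so the Hilbert statement is the special case $t=1$. One could alternatively apply Theorem~\ref{thm:general-local-global} directly to $\mathcal D_n^\circ$ once the transport has been checked.

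The only step needing care is confirming that the gyrometric transported to $\mathcal D_n^\circ$ induces the topology in which $U,V$ are assumed open. If ``open'' is meant in the matrix norm topology, this is exactly the topology claim in the proof of Proposition~\ref{prop:projective-continuity}, combined with the fact that the normalized power map is a norm homeomorphism of the section. Everything else is formal bookkeeping of conjugation by $s$.
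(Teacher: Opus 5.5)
Your proposal is correct and matches the paper's proof: identify $\mathcal D_n^\circ$ with $\mathbb P A_+^\circ$ through the unique trace-one representative of each ray, apply Theorem~\ref{thm:projective-local-global} with $B=A$, and then conjugate back. Your extra transport and topology checks only spell out details the paper's one-line proof leaves implicit.
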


\begin{proof}
Identify $\mathcal D_n^\circ$ with the projective cone by the unique trace-one
representative of each ray and apply
Theorem~\ref{thm:projective-local-global}.
\end{proof}

\begin{corollary}\label{cor:local-Jordan-both}
If there is a bijective $\rho_t$-isometry between nonempty connected open
subsets of the affine positive cones, or a bijective $\prho_t$-isometry between
nonempty connected open subsets of the projective positive cones, then $A$ and
$B$ are Jordan isomorphic.
\end{corollary}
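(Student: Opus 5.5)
The plan is to combine the local-to-global extension theorems with the global classifications. In each case the global classification already contains a Jordan isomorphism $J:A\to B$.

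\emph{Affine case.} Let $T:U\to V$ be a bijective $\rho_t$-isometry between nonempty connected open subsets of $A_+^\circ$ and $B_+^\circ$. By Theorem~\ref{thm:JB-local-global}, $T$ extends to a bijective $\rho_t$-isometry $\widetilde T:A_+^\circ\to B_+^\circ$. The necessity direction of Theorem~\ref{thm:global-classification} then gives a Jordan isomorphism $J:A\to B$ and a central projection $p$ with $\widetilde T$ of the form \eqref{eq:global-form}. That Jordan isomorphism is exactly what is needed. One could also read it off directly from Corollary~\ref{cor:JB-local-classification}.

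\emph{Projective case.} Here I first have to handle the standing hypothesis of the projective subsection, namely $A,B\ne\mathbb Re$.
\begin{itemize}
\item If both $A$ and $B$ are at least two-dimensional, Theorem~\ref{thm:projective-local-global} extends $T$ to a bijective $\prho_t$-isometry of the whole projective cones. Theorem~\ref{thm:projective-global-classification} then yields a Jordan isomorphism $J:A\to B$ in \eqref{eq:projective-global-form}.
\item If $A=\mathbb Re$, then $\mathbb P A_+^\circ$ is a single point. Hence $U$ is a point, and so is $V$, since $T$ is a bijection.
\item If $B\ne\mathbb Re$, then $\mathbb P B_+^\circ$ is homeomorphic to the convex section $\Sigma_\omega$, which has positive dimension. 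A nonempty open subset of it is therefore not a single point, a contradiction.
\end{itemize}
So both algebras are $\mathbb Re$, and the identity is a Jordan isomorphism. The case $B=\mathbb Re$ is symmetric.

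The only delicate step is this degenerate case. It is needed because the projective GGV was defined only for $A\ne\mathbb Re$: its norm-value set is $\{0\}$ otherwise. I would either treat the degenerate case via the cardinality argument above, or state that the projective assertion is understood under the standing assumption of that subsection. All other steps are direct invocations of earlier results.
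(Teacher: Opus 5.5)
Your proposal is correct and follows the paper's route: the paper proves this by invoking the affine and projective local classification corollaries, which amount to extension plus global classification, and whose normal forms already contain the Jordan isomorphism $J:A\to B$. Your extra treatment of the case $A=\mathbb Re$ or $B=\mathbb Re$ is sound but not required in the paper, which states the corollary under the subsection's standing assumption that neither algebra is one-dimensional; your argument shows that the projective assertion also holds without that assumption.
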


\begin{proof}
Use the corresponding local classification theorem.
\end{proof}

\begin{remark}\label{rem:affine-projective-contrast}
The affine and projective classifications have a notable difference.  For
Thompson geometry, inversion may occur independently on central summands and
is encoded by a central projection $p$.  For Hilbert projective geometry the
classification has a single global sign $\varepsilon=\pm1$.  The common
local-to-global mechanism, however, is the same abstract standard-norm GGV
theorem.
\end{remark}

\subsection*{Declaration on the Use of Generative AI}

ChatGPT (OpenAI) was used during preparation of this manuscript for language
editing, organization of the presentation, literature-oriented discussion, and
discussion of mathematical arguments.  All mathematical statements and proofs
were independently verified by the author, who takes full responsibility for
the content of the manuscript.


\begin{thebibliography}{99}

\bibitem{Abe2014}
T.~Abe,
\emph{Gyrometric preserving maps on Einstein gyrogroups, M\"obius gyrogroups
and Proper Velocity gyrogroups},
Nonlinear Funct. Anal. Appl. \textbf{19} (2014), 1--17.

\bibitem{AH2015}
T.~Abe and O.~Hatori,
\emph{Generalized gyrovector spaces and a Mazur--Ulam theorem},
Publ. Math. Debrecen \textbf{87} (2015), 393--413.

\bibitem{AHrevisited}
T.~Abe and O.~Hatori,
\emph{Generalized gyrovector spaces revisited},
Nihonkai Math. J. \textbf{35}, no.~1 (2024), to appear; arXiv:2403.15015.

\bibitem{HatoriExtension}
O.~Hatori,
\emph{Extension of isometries in generalized gyrovector spaces of the positive cones},
Contemp. Math. \textbf{687} (2017), 145--156.

\bibitem{HatoriMolnar}
O.~Hatori and L.~Moln\'ar,
\emph{Isometries of the unitary groups and Thompson isometries of the
spaces of invertible positive elements in $C^*$-algebras},
J. Math. Anal. Appl. \textbf{409} (2014), no.~1, 158--167.

\bibitem{HatoriRIMS}
O.~Hatori,
\emph{Positive cones and GGV},
RIMS K\^oky\^uroku \textbf{1996} (2016), 107--112.

\bibitem{IRP}
J.~M. Isidro and A.~Rodr\'iguez-Palacios,
\emph{Isometries of JB-algebras},
Manuscripta Math. \textbf{86} (1995), no.~3, 337--348.

\bibitem{KimLawson}
S.~Kim and J.~Lawson,
\emph{Smooth Bruck loops, symmetric spaces, and nonassociative vector spaces},
Demonstratio Math. \textbf{44} (2011), no.~4, 755--779.

\bibitem{Klotz}
M.~Klotz,
\emph{Banach Symmetric Spaces},
arXiv:0911.2089v3 (2011).

\bibitem{Loos}
O.~Loos,
\emph{Symmetric Spaces I: General Theory},
W.~A. Benjamin, New York--Amsterdam, 1969.

\bibitem{LRW}
B.~Lemmens, M.~Roelands and M.~Wortel,
\emph{Hilbert and Thompson isometries on cones in JB-algebras},
Math. Z. \textbf{292} (2019), 1511--1547.

\bibitem{Mankiewicz}
P.~Mankiewicz,
\emph{On extension of isometries in normed linear spaces},
Bull. Acad. Polon. Sci. S\'er. Sci. Math. Astronom. Phys.
\textbf{20} (1972), 367--371.

\bibitem{McCrimmon}
K.~McCrimmon,
\emph{A Taste of Jordan Algebras},
Universitext, Springer-Verlag, New York, 2004.

\bibitem{RW}
M.~Roelands and M.~Wortel,
\emph{Hilbert isometries and maximal deviation preserving maps on JB-algebras},
Adv. Math. \textbf{352} (2019), 836--861.

\bibitem{Ungar2008}
A.~A. Ungar,
\emph{Analytic Hyperbolic Geometry and Albert Einstein's Special Theory of Relativity},
World Scientific, Singapore, 2008.

\bibitem{Watanabe2016}
K.~Watanabe,
\emph{A confirmation by hand calculation that the M\"obius ball is a gyrovector space},
Nihonkai Math. J. \textbf{27} (2016), 99--115.

\bibitem{Watanabe2023}
K.~Watanabe,
\emph{M\"obius gyrovector spaces and functional analysis},
RIMS K\^oky\^uroku Bessatsu \textbf{B93} (2023), 223--237.

\end{thebibliography}
\end{document}